\def\xstringversion     {1.6}
\def\xstringdate        {2012/10/24}
\edef\CurrentAtCatcode  {\the\catcode`\@}
\newwrite\@xs@message
\newif\if@xs@empty
\write\m@ne{Package: xstring \xstringdate\space\space v\xstringversion\space\space String manipulations (C Tellechea)}%
	\long\def\@firstoftwo#1#2{#1}
	\long\def\@secondoftwo#1#2{#2}
	\long\def\@gobble#1{}
	\long\def\@ifnextchar#1#2#3{%
		\let\reserved@d=#1%
		\def\reserved@a{#2}%
		\def\reserved@b{#3}%
		\futurelet\@let@arg\@ifnch}
	\def\@ifnch{%
		\ifx\@let@arg\@sptoken
			\let\reserved@c\@xifnch
		\else
			\ifx\@let@arg\reserved@d
				\let\reserved@c\reserved@a
			\else
				\let\reserved@c\reserved@b
			\fi
		\fi
		\reserved@c}
	\def\:{\let\@sptoken= } \:
	\def\:{\@xifnch} \expandafter\def\: {\futurelet\@let@arg\@ifnch}
	\def\@ifstar#1{\@ifnextchar *{\@firstoftwo{#1}}}
	\long\def\@testopt#1#2{\@ifnextchar[{#1}{#1[{#2}]}}
	\def\@empty{}
\def\@xs@testempty#1{%
	\expandafter\ifx\expandafter\@empty\detokenize{#1}\@empty\@xs@emptytrue\else\@xs@emptyfalse\fi}
\def\@xs@MakeVerb{
	\begingroup
		\def\do##1{\catcode`##112\relax}%
		\dospecials
		\obeyspaces
		\@xs@ReadVerb
}
\def\setverbdelim#1{
	\expandafter\@xs@testempty\expandafter{\@gobble#1}%
	\if@xs@empty
	\else
		\begingroup
			\newlinechar`\^^J%
			\immediate\write\@xs@message
			{Package xstring Warning: verb delimiter is not a single token on input line \the\inputlineno^^J}%
		\endgroup
	\fi
	\def\@xs@ReadVerb##1#1##2#1{
		\endgroup
		\@xs@afterreadverb{##2}}
}
\def\verbtocs#1{%
	\def\@xs@afterreadverb##1{\def#1{##1}}%
	\@xs@MakeVerb
}
	\gdef\tokenize#1#2{%
		\begingroup
			\@xs@def\@xs@reserved@A{#2}
			\def\@xs@AssignResult^^00##1^^00\@xs@nil{\gdef#1{##1}}
			\everyeof{\@xs@nil}
			\endlinechar\m@ne
			\catcode\z@12\relax
			\expandafter\@xs@AssignResult\scantokens\expandafter{\expandafter^^00\@xs@reserved@A^^00}
		\endgroup
	}%
\def\@xs@twochars{^^00}%
\xdef\@xs@twochars{\@xs@twochars^^00}%
\edef\@xs@reserved@A{\def\noexpand\@xs@AssignResult##1\@xs@twochars}
\def\tokenize#1#2{%
	\begingroup
		\@xs@def\@xs@reserved@A{#2}
		\everyeof\expandafter{\@xs@twochars#1}
		\endlinechar\m@ne
		\expandafter\@xs@AssignResult\scantokens\expandafter{\expandafter\relax\@xs@reserved@A}
}%
\def\@xs@ReturnResult#1#2{%
	\def\@xs@argument@A{#1}%
	\@xs@testempty{#2}%
	\if@xs@empty
		\@xs@argument@A
	\else
		\let#2\@xs@argument@A
	\fi
}
\def\normalexpandarg{%
	\let\@xs@def\def
	\def\@xs@expand##1{\unexpanded\expandafter{##1}}}
\def\expandarg{%
	\let\@xs@def\def
	\def\@xs@expand##1{\unexpanded\expandafter\expandafter\expandafter{##1}}%
}
\def\fullexpandarg{%
	\let\@xs@def\edef
	\def\@xs@expand##1{##1}
}
\def\saveexpandmode{\let\@xs@saved@def\@xs@defarg\let\@xs@saved@expand\@xs@expand}
\def\restoreexpandmode{\let\@xs@defarg\@xs@saved@def\let\@xs@expand\@xs@saved@expand}
\def\@xs@ifbeginwithbrace#1{%
	\csname @%
		\expandafter\@gobble\string{%
		\expandafter\@gobble\expandafter{\expandafter{\string#1}%
		\expandafter\expandafter\expandafter\expandafter\expandafter\expandafter\expandafter\expandafter\expandafter\expandafter\expandafter\expandafter\expandafter\expandafter\expandafter\@firstoftwo
		\expandafter\expandafter\expandafter\expandafter\expandafter\expandafter\expandafter\@gobble
		\expandafter\expandafter\expandafter\@gobble
		\expandafter\expandafter\expandafter{\expandafter\string\expandafter}\string}%
		\expandafter\@gobble\string}%
		\@secondoftwo{first}{second}oftwo%
	\endcsname
}
\def\@xs@returnfirstsyntaxunit#1#2{%
	\def\@xs@groupfound{\expandafter\def\expandafter#2\expandafter{\expandafter{#2}}\@xs@gobbleall}
	\def\@xs@assignfirsttok##1##2\@xs@nil{\let\@xs@toks0\def#2{##1}}%
	\def\@xs@testfirsttok{%
		\let\@xs@next\@xs@assignfirsttok
		\ifx\@xs@toks\bgroup
			\expandafter\@xs@ifbeginwithbrace\expandafter{\@xs@argument}{\def\@xs@next{\afterassignment\@xs@groupfound\def#2}}{}%
		\fi
		\@xs@next}%
	\def\@xs@argument{#1}%
	\edef\@xs@next{\expandafter\@xs@beforespace\detokenize{#1} \@xs@nil}
	\ifx\@xs@next\@empty
		\def\@xs@next{\expandafter\ifx\expandafter\@empty\detokenize\expandafter{\@xs@argument}\@empty\let#2\@empty\else\def#2{ }\let\@xs@toks0\fi}%
	\else
		\def\@xs@next{\expandafter\futurelet\expandafter\@xs@toks\expandafter\@xs@testfirsttok\@xs@argument\@xs@nil}%
	\fi
	\@xs@next
}
\def\@xs@testsecondtoken#1\@xs@nil{\@xs@ifbeginwithbrace{#1}}
\def\@xs@gobblespacebeforebrace#1#{}
\def\@xs@removefirstsyntaxunit#1#2{%
	\def\@xs@argument{#1}%
	\expandafter\expandafter\expandafter\ifx\expandafter\expandafter\expandafter\@empty\expandafter\@xs@beforespace\detokenize\expandafter{\@xs@argument} \@xs@nil\@empty
		\expandafter\@xs@testempty\expandafter{\@xs@argument}%
		\if@xs@empty
			\let#2\@empty
		\else
			\afterassignment\@xs@testsecondtoken
			\expandafter\let\expandafter\@xs@secontoken\expandafter=\expandafter\@sptoken\@xs@argument\@xs@@nil\@xs@nil
				{\expandafter\expandafter\expandafter\def\expandafter\expandafter\expandafter#2%
				 \expandafter\expandafter\expandafter{\expandafter\@xs@gobblespacebeforebrace\@xs@argument}}%
				{\expandafter\expandafter\expandafter\def\expandafter\expandafter\expandafter#2%
				 \expandafter\expandafter\expandafter{\expandafter\@xs@behindspace\@xs@argument\@xs@nil}}%
		\fi
	\else
		\expandafter\expandafter\expandafter\def\expandafter\expandafter\expandafter#2%
		\expandafter\expandafter\expandafter{\expandafter\@gobble\@xs@argument}%
	\fi
}
\def\@xs@beforespace#1 #2\@xs@nil{#1}
\def\@xs@behindspace#1 #2\@xs@nil{#2}
\def\@xs@returnfirstsyntaxunit@ii#1#2\@xs@nil{#1}
\def\@xs@gobbleall#1\@xs@nil{}
\def\@xs@expand@and@detokenize#1#2{%
	\def#1{#2}%
	\expandafter\edef\expandafter#1\expandafter{\@xs@expand#1}
	\edef#1{\detokenize\expandafter{#1}}
}
\def\@xs@expand@and@assign#1#2{\@xs@def#1{#2}}
\def\@xs@edefaddtomacro#1#2{\edef#1{\unexpanded\expandafter{#1}#2}}
\def\@xs@addtomacro#1#2{\expandafter\def\expandafter#1\expandafter{#1#2}}
\def\@xs@argstring{0########1########2########3########4########5########6########7########8########9}
\def\@xs@DefArg#1{\def\@xs@defarg0##1#1##2\@xs@nil{\def\@xs@myarg{##1#1}}\expandafter\@xs@defarg\@xs@argstring\@xs@nil}
\def\@xs@DefArg@#1{\expandafter\@xs@defarg@\expandafter{\number\numexpr#1+1}}
\def\@xs@defarg@#1{\def\@xs@defarg0##11##2#1##3\@xs@nil{\def\@xs@myarg{[##11]##2#1}}\expandafter\@xs@defarg\@xs@argstring\@xs@nil}
\def\@xs@OneArg#1{\expandafter\@xs@onearg\expandafter{\number\numexpr#1-1}{#1}}
\def\@xs@onearg#1#2{\def\@xs@defarg##1#1##2#2##3\@xs@nil{\def\@xs@myarg{##2#2}}\expandafter\@xs@defarg\@xs@argstring\@xs@nil}
\def\@xs@BuildLines#1#2#3#4{%
	\let\@xs@newlines\@empty
	\let\@xs@newargs\@empty
	\def\@xs@buildlines##1{%
		\expandafter\@xs@OneArg\expandafter{\number\numexpr##1+#1-1}%
		\edef\@xs@reserved@B{\noexpand\@xs@expand\csname @xs@arg@\romannumeral\numexpr##1\endcsname}%
		\ifnum##1=\@ne
			\@xs@testempty{#3}%
			\if@xs@empty
				\expandafter\@xs@addtomacro\expandafter\@xs@newargs\expandafter{\expandafter{\@xs@reserved@B}}%
				\edef\@xs@reserved@B{\ifnum##1>#4 @xs@def\else @xs@assign\fi}%
			\else
				\expandafter\@xs@addtomacro\expandafter\@xs@newargs\expandafter{\expandafter[\@xs@reserved@B]}%
				\def\@xs@reserved@B{@xs@def}
			\fi
		\else
			\expandafter\@xs@addtomacro\expandafter\@xs@newargs\expandafter{\expandafter{\@xs@reserved@B}}%
			\edef\@xs@reserved@B{\ifnum##1>#4 @xs@def\else @xs@assign\fi}%
		\fi
		\edef\@xs@newlines{\unexpanded\expandafter{\@xs@newlines}\expandafter\noexpand\csname\@xs@reserved@B\endcsname\expandafter\noexpand\csname @xs@arg@\romannumeral\numexpr##1\endcsname{\@xs@myarg}}%
		\ifnum##1<#2\relax
			\def\@xs@next{\expandafter\@xs@buildlines\expandafter{\number\numexpr##1+1}}%
			\expandafter\@xs@next
		\fi}%
	\@xs@buildlines\@ne
}
\def\@xs@newmacro{%
	\@ifstar
		{\let\@xs@reserved@D\@empty\@xs@newmacro@}
		{\let\@xs@reserved@D\relax\@xs@newmacro@0}%
}
\def\@xs@newmacro@#1#2#3#4#5{%
	\edef\@xs@reserved@A{@xs@\expandafter\@gobble\string#2}%
	\edef\@xs@reserved@C{\expandafter\noexpand\csname\@xs@reserved@A @\ifx\@empty#3\@empty @\fi\endcsname}%
	\edef\@xs@reserved@B{%
		\ifx\@empty\@xs@reserved@D
			\def\noexpand#2{\noexpand\@ifstar
				{\let\noexpand\@xs@assign\noexpand\@xs@expand@and@detokenize\expandafter\noexpand\@xs@reserved@C}%
				{\let\noexpand\@xs@assign\noexpand\@xs@expand@and@assign\expandafter\noexpand\@xs@reserved@C}%
			}%
		\else
			\def\noexpand#2{\let\noexpand\@xs@assign\noexpand\@xs@expand@and@assign\expandafter\noexpand\@xs@reserved@C}%
		\fi
		\ifx\@empty#3\@empty
		\else
			\def\expandafter\noexpand\@xs@reserved@C{%
				\noexpand\@testopt{\expandafter\noexpand\csname\@xs@reserved@A @@\endcsname}{\ifx\@xs@def\edef#3\else\unexpanded{#3}\fi}}%
		\fi
	}%
	\@xs@reserved@B
	\ifx\@empty#3\@empty
		\@xs@BuildLines1{#4}{#3}{#1}%
		\@xs@DefArg{#4}%
	\else
		\expandafter\@xs@BuildLines\expandafter1\expandafter{\number\numexpr#4+1}{#3}{#1}%
		\@xs@DefArg@{#4}%
	\fi
	\edef\@xs@reserved@B{\def\expandafter\noexpand\csname\@xs@reserved@A @@\endcsname\@xs@myarg}%
	\edef\@xs@reserved@C{\unexpanded\expandafter{\@xs@newlines}\edef\noexpand\@xs@call}%
	\edef\@xs@reserved@D{%
		\noexpand\noexpand\expandafter\noexpand\csname\@xs@reserved@A\endcsname\unexpanded\expandafter{\@xs@newargs}%
	}%
	\ifnum#5=\@ne\edef\@xs@reserved@D{\noexpand\noexpand\noexpand\@testopt{\unexpanded\expandafter{\@xs@reserved@D}}{}}\fi
	\@xs@edefaddtomacro\@xs@reserved@C{{\unexpanded\expandafter{\@xs@reserved@D}}\noexpand\@xs@call}%
	\@xs@edefaddtomacro\@xs@reserved@B{{\unexpanded\expandafter{\@xs@reserved@C}}}%
	\@xs@reserved@B
	\edef\@xs@reserved@B{%
		\def\expandafter\noexpand\csname\@xs@reserved@A\endcsname
			\@xs@myarg\ifnum#5=\@ne[\unexpanded{##}\number\numexpr\ifx\@empty#3\@empty#4+1\else#4+2\fi]\fi
	}%
	\@xs@reserved@B
}
\def\@xs@read@reserved@C{%
	\expandafter\@xs@testempty\expandafter{\@xs@reserved@C}%
	\if@xs@empty
		\ifnum\@xs@nestlevel=\z@
			\let\@xs@next\relax
		\else
			\let\@xs@next\@xs@atendofgroup
		\fi
	\else
		\expandafter\@xs@returnfirstsyntaxunit\expandafter{\@xs@reserved@C}\@xs@reserved@A
		\expandafter\@xs@removefirstsyntaxunit\expandafter{\@xs@reserved@C}\@xs@reserved@C
		\let\@xs@next\@xs@read@reserved@C
		\@xs@exploregroups
		\ifx\bgroup\@xs@toks
			\advance\integerpart\@ne
			\begingroup
				\expandafter\def\expandafter\@xs@reserved@C\@xs@reserved@A
				\@xs@manage@groupID
				\let\@xs@nestlevel\@ne
				\integerpart\z@
				\@xs@atbegingroup
		\else
			\global\advance\decimalpart\@ne
			\@xs@atnextsyntaxunit
		\fi
	\fi
	\@xs@next
}
\def\@xs@read@reserved@D{%
	\expandafter\@xs@testempty\expandafter{\@xs@reserved@D}%
	\if@xs@empty
		\ifnum\@xs@nestlevel=\z@
			\let\@xs@next\relax
		\else
			\let\@xs@next\@xs@atendofgroup
		\fi
	\else
		\expandafter\expandafter\expandafter\@xs@IfBeginWith@i\expandafter\expandafter\expandafter{\expandafter\@xs@reserved@D\expandafter}\expandafter{\@xs@reserved@E}%
			{\global\advance\decimalpart\@ne
			\let\@xs@reserved@D\@xs@reserved@A
			\@xs@atoccurfound
			}%
			{\expandafter\@xs@returnfirstsyntaxunit\expandafter{\@xs@reserved@D}\@xs@reserved@A
			\expandafter\@xs@removefirstsyntaxunit\expandafter{\@xs@reserved@D}\@xs@reserved@D
			\let\@xs@next\@xs@read@reserved@D
			\@xs@exploregroups
			\ifx\bgroup\@xs@toks
				\advance\integerpart\@ne
				\begingroup
					\expandafter\def\expandafter\@xs@reserved@D\@xs@reserved@A
					\@xs@manage@groupID
					\let\@xs@reserved@C\@empty
					\let\@xs@nestlevel\@ne
					\integerpart\z@
			\else
				\expandafter\@xs@addtomacro\expandafter\@xs@reserved@C\expandafter{\@xs@reserved@A}%
			\fi
			}%
	\fi
	\@xs@next
}
\@xs@newmacro\StrRemoveBraces{}{1}{1}{%
	\def\@xs@reserved@C{#1}%
	\let\@xs@reserved@B\@empty
	\let\@xs@nestlevel\z@
	\@xs@StrRemoveBraces@i
	\expandafter\@xs@ReturnResult\expandafter{\@xs@reserved@B}{#2}%
}
\def\@xs@StrRemoveBraces@i{%
	\expandafter\@xs@testempty\expandafter{\@xs@reserved@C}%
	\if@xs@empty
		\ifnum\@xs@nestlevel=\z@
			\let\@xs@next\relax
		\else
			\expandafter\endgroup
			\expandafter\@xs@addtomacro\expandafter\@xs@reserved@B\expandafter{\@xs@reserved@B}%
			\let\@xs@next\@xs@StrRemoveBraces@i
		\fi
	\else
		\expandafter\@xs@returnfirstsyntaxunit\expandafter{\@xs@reserved@C}\@xs@reserved@A
		\expandafter\@xs@removefirstsyntaxunit\expandafter{\@xs@reserved@C}\@xs@reserved@C
		\let\@xs@next\@xs@StrRemoveBraces@i
		\ifx\bgroup\@xs@toks
			\ifx\@xs@exploregroups\relax
				\begingroup
					\expandafter\def\expandafter\@xs@reserved@C\@xs@reserved@A
					\let\@xs@nestlevel\@ne
					\integerpart\z@
					\let\@xs@reserved@B\@empty
			\else
				\expandafter\@xs@addtomacro\expandafter\@xs@reserved@B\@xs@reserved@A
			\fi
		\else
			\expandafter\@xs@addtomacro\expandafter\@xs@reserved@B\expandafter{\@xs@reserved@A}%
		\fi
	\fi
	\@xs@next
}
\def\@xs@cutafteroccur#1#2#3{%
	\ifnum#3<\@ne\expandafter\@firstoftwo\else\expandafter\@secondoftwo\fi
		{\let\@xs@reserved@C\@empty\let\@xs@reserved@E\@empty\let\groupID\@empty}
		{\@xs@cutafteroccur@i{#1}{#2}{#3}}%
}
\def\@xs@cutafteroccur@i#1#2#3{%
	\def\@xs@reserved@D{#1}\let\@xs@reserved@C\@empty\def\@xs@reserved@E{#2}%
	\decimalpart\z@\integerpart\z@\def\groupID{0}\let\@xs@nestlevel\z@
	\def\@xs@atendofgroup{%
		\expandafter\endgroup
		\expandafter\@xs@addtomacro\expandafter\@xs@reserved@C\expandafter{\expandafter{\@xs@reserved@C}}%
		\@xs@read@reserved@D}%
	\def\@xs@atoccurfound{%
		\ifnum\decimalpart=\numexpr(#3)\relax
			\global\let\@xs@reserved@D\@xs@reserved@D
			\global\let\@xs@reserved@C\@xs@reserved@C
			\global\let\groupID\groupID
			\@xs@exitallgroups
			\let\@xs@next\relax
		\else
			\expandafter\@xs@addtomacro\expandafter\@xs@reserved@C\expandafter{\@xs@reserved@E}%
			\let\@xs@next\@xs@read@reserved@D
		\fi}%
	\@xs@read@reserved@D
	\def\@xs@argument@A{#2}%
	\ifnum\decimalpart=\numexpr(#3)\relax 
		\let\@xs@reserved@E\@xs@reserved@D
		\expandafter\expandafter\expandafter\def\expandafter\expandafter\expandafter\@xs@reserved@D\expandafter\expandafter\expandafter{\expandafter\@xs@reserved@C\@xs@argument@A}%
	\else
		\let\@xs@reserved@C\@empty\let\@xs@reserved@E\@empty\let\groupID\@empty
	\fi
}
	\def\@xs@argument@A{#2}\def\@xs@argument@B{#3}%
\@xs@testempty\expandafter{\@xs@reserved@D}%
	\def\@xs@argument@A{#1}\def\@xs@argument@B{#2}%
\@xs@testempty\expandafter{\@xs@argument@B}%
		\let\@xs@next\@secondoftwo
		\def\@xs@next{\expandafter\expandafter\expandafter\@xs@IfBeginWith@i
			\expandafter\expandafter\expandafter{\expandafter\@xs@argument@A\expandafter}\expandafter{\@xs@argument@B}}%
\def\@xs@IfBeginWith@i#1#2{%
	\def\@xs@argument@A{#1}\def\@xs@argument@B{#2}%
	\expandafter\@xs@testempty\expandafter{\@xs@argument@B}%
	\if@xs@empty
		\let\@xs@next\@firstoftwo
	\else
		\expandafter\@xs@testempty\expandafter{\@xs@argument@A}
		\if@xs@empty
			\let\@xs@next\@secondoftwo
		\else
			\expandafter\@xs@returnfirstsyntaxunit\expandafter{\@xs@argument@B}\@xs@reserved@B
			\expandafter\@xs@returnfirstsyntaxunit\expandafter{\@xs@argument@A}\@xs@reserved@A
			\ifx\@xs@reserved@A\@xs@reserved@B
				\expandafter\@xs@removefirstsyntaxunit\expandafter{\@xs@argument@B}\@xs@reserved@B
				\expandafter\@xs@removefirstsyntaxunit\expandafter{\@xs@argument@A}\@xs@reserved@A
				\def\@xs@next{
					\expandafter\expandafter\expandafter\@xs@IfBeginWith@i
					\expandafter\expandafter\expandafter{\expandafter\@xs@reserved@A\expandafter}\expandafter{\@xs@reserved@B}}%
			\else
				\let\@xs@next\@secondoftwo
			\fi
		\fi
	\fi
	\@xs@next
}
	\def\@xs@argument@A{#1}\def\@xs@argument@B{#2}%
		\let\@xs@reserved@A\@secondoftwo
			\let\@xs@reserved@A\@secondoftwo
\@xs@testempty\expandafter{\@xs@reserved@C}%
				\let\@xs@reserved@A\@firstoftwo
				\let\@xs@reserved@A\@secondoftwo
\def\@xs@IfSubStrBefore@i[#1,#2]#3#4#5{%
	\def\@xs@reserved@C{#3}%
	\ifx\@xs@exploregroups\relax
		\let\@xs@reserved@B\@empty
		\let\@xs@nestlevel\z@
		\@xs@StrRemoveBraces@i
		\let\@xs@reserved@C\@xs@reserved@B
	\fi
	\def\@xs@reserved@A{#5}%
	\expandafter\expandafter\expandafter\@xs@cutafteroccur\expandafter\expandafter\expandafter{\expandafter\@xs@reserved@C\expandafter}\expandafter{\@xs@reserved@A}{#2}%
	\def\@xs@reserved@A{#4}%
	\expandafter\expandafter\expandafter\@xs@cutafteroccur\expandafter\expandafter\expandafter{\expandafter\@xs@reserved@C\expandafter}\expandafter{\@xs@reserved@A}{#1}%
	\let\groupID\@empty
	\expandafter\@xs@testempty\expandafter{\@xs@reserved@C}%
	\if@xs@empty
		\expandafter\@secondoftwo
	\else
		\expandafter\@firstoftwo
	\fi
}
\def\@xs@IfSubStrBehind@i[#1,#2]#3#4#5{\@xs@IfSubStrBefore@i[#2,#1]{#3}{#5}{#4}}
\def\@xs@formatnumber#1#2{%
	\def\@xs@argument@A{#1}%
	\@xs@testempty{#1}%
	\if@xs@empty
		\def#2{0X}
	\else
		\@xs@returnfirstsyntaxunit{#1}\@xs@reserved@A
		\def\@xs@reserved@B{+}%
		\ifx\@xs@reserved@A\@xs@reserved@B
			\expandafter\@xs@removefirstsyntaxunit\expandafter{\@xs@argument@A}\@xs@reserved@C
			\expandafter\@xs@testempty\expandafter{\@xs@reserved@C}%
			\if@xs@empty
			 	\def#2{+0X}%
			 \else
			 	\expandafter\def\expandafter#2\expandafter{\expandafter+\expandafter0\@xs@reserved@C}%
			 \fi
		\else
			\def\@xs@reserved@B{-}%
			\ifx\@xs@reserved@A\@xs@reserved@B
				\expandafter\@xs@removefirstsyntaxunit\expandafter{\@xs@argument@A}\@xs@reserved@A
				\expandafter\@xs@testempty\expandafter{\@xs@reserved@A}%
				\if@xs@empty
				 	\def#2{-0X}%
				 \else
				 	\expandafter\def\expandafter#2\expandafter{\expandafter-\expandafter0\@xs@reserved@A}%
				 \fi
			\else
				\expandafter\def\expandafter#2\expandafter{\expandafter0\@xs@argument@A}%
			\fi
		\fi
	\fi
}
\@xs@newmacro\IfInteger{}{1}{0}{%
	\@xs@formatnumber{#1}\@xs@reserved@A
	\decimalpart\z@
	\afterassignment\@xs@defafterinteger\integerpart\@xs@reserved@A\relax\@xs@nil
	\let\@xs@after@intpart\@xs@afterinteger
	\expandafter\@xs@testdot\@xs@afterinteger\@xs@nil
	\ifx\@empty\@xs@afterdecimal
		\ifnum\decimalpart=\z@
			\let\@xs@next\@firstoftwo
		\else
			\let\@xs@afterinteger\@xs@after@intpart
			\let\@xs@next\@secondoftwo
		\fi
	\else
		\let\@xs@afterinteger\@xs@after@intpart
		\let\@xs@next\@secondoftwo
	\fi
	\@xs@next
}
\@xs@newmacro\IfDecimal{}{1}{0}{%
	\@xs@formatnumber{#1}\@xs@reserved@A
	\decimalpart\z@
	\afterassignment\@xs@defafterinteger\integerpart\@xs@reserved@A\relax\@xs@nil
	\expandafter\@xs@testdot\@xs@afterinteger\@xs@nil
	\ifx\@empty\@xs@afterdecimal
		\expandafter\@firstoftwo
	\else
		\expandafter\@secondoftwo
	\fi
}
\def\@xs@defafterinteger#1\relax\@xs@nil{\def\@xs@afterinteger{#1}}
\def\@xs@testdot{%
	\let\xs@decsep\@empty
	\@ifnextchar.%
		{\def\xs@decsep{.}\@xs@readdecimalpart}%
		{\@xs@testcomma}%
}
\def\@xs@testcomma{%
	\@ifnextchar,%
		{\def\xs@dessep{,}\@xs@readdecimalpart}%
		{\@xs@endnumber}%
}
\def\@xs@readdecimalpart#1#2\@xs@nil{%
	\ifx\@empty#2\@empty
		\def\@xs@reserved@A{0X}%
	\else
		\def\@xs@reserved@A{0#2}%
	\fi
	\afterassignment\@xs@defafterinteger\decimalpart\@xs@reserved@A\relax\@xs@nil
	\expandafter\@xs@endnumber\@xs@afterinteger\@xs@nil
}
\def\@xs@endnumber#1\@xs@nil{\def\@xs@afterdecimal{#1}}
\def\@xs@IfStrEqFalse@i#1#2{\let\@xs@reserved@A\@secondoftwo}
\def\@xs@IfStrEqFalse@ii#1#2{
	\@xs@IfDecimal{#1}%
		{\@xs@IfDecimal{#2}%
			{\ifdim#1pt=#2pt
				\let\@xs@reserved@A\@firstoftwo
			\else
				\let\@xs@reserved@A\@secondoftwo
			\fi
			}%
			{\let\@xs@reserved@A\@secondoftwo}
		}%
		{\let\@xs@reserved@A\@secondoftwo}
}
\def\@xs@TestEqual#1#2{
	\def\@xs@reserved@A{#1}\def\@xs@reserved@B{#2}%
	\ifx\@xs@reserved@A\@xs@reserved@B
		\let\@xs@reserved@A\@firstoftwo
	\else
		\expandafter\expandafter\expandafter\@xs@reserved@D\expandafter\expandafter\expandafter{\expandafter\@xs@reserved@A\expandafter}\expandafter{\@xs@reserved@B}%
	\fi
	\@xs@reserved@A
}
	\let\@xs@reserved@D\@xs@IfStrEqFalse@i
	\let\@xs@reserved@D\@xs@IfStrEqFalse@ii
\def\IfStrEqCase{%
	\@ifstar
		{\def\@xs@reserved@E{\IfStrEq*}\@xs@IfStrCase}%
		{\def\@xs@reserved@E{\IfStrEq}\@xs@IfStrCase}%
}
\def\@xs@IfStrCase#1#2{\@testopt{\@xs@IfStringCase{#1}{#2}}{}}
\def\IfEqCase{%
	\@ifstar
		{\def\@xs@reserved@E{\IfEq*}\@xs@IfEqCase}%
		{\def\@xs@reserved@E{\IfEq}\@xs@IfEqCase}%
}
\def\@xs@IfEqCase#1#2{\@testopt{\@xs@IfStringCase{#1}{#2}}{}}
\def\@xs@IfStringCase#1#2[#3]{%
	\def\@xs@testcase##1##2##3\@xs@nil{
		\@xs@reserved@E{#1}{##1}%
			{##2}
			{\@xs@testempty{##3}%
			 \if@xs@empty
			 	\def\@xs@next{#3}
			 \else
			 	\def\@xs@next{\@xs@testcase##3\@xs@nil}
			 \fi
			 \@xs@next
			 }%
	}%
	\@xs@testcase#2\@xs@nil
}
\@xs@ReturnResult\expandafter{\@xs@reserved@C}{#4}%
\@xs@ReturnResult\expandafter{\@xs@reserved@E}{#4}%
\def\@xs@StrBetween@i[#1,#2]#3#4#5[#6]{%
	\begingroup
		\noexploregroups
		\@xs@cutafteroccur{#3}{#5}{#2}%
		\expandafter\@xs@cutafteroccur\expandafter{\@xs@reserved@C}{#4}{#1}%
		\expandafter
	\endgroup
	\expandafter\@xs@ReturnResult\expandafter{\@xs@reserved@E}{#6}%
	\let\groupID\@empty
}
\def\exploregroups{\let\@xs@exploregroups\relax}
\def\noexploregroups{\def\@xs@exploregroups{\let\@xs@toks0\relax}}
\def\saveexploremode{\let\@xs@saveexploremode\@xs@exploregroups}
\def\restoreexploremode{\let\@xs@exploregroups\@xs@saveexploremode}
\@xs@newmacro\StrSubstitute{0}{3}{1}{%
	\def\@xs@reserved@D{#2}\let\@xs@reserved@C\@empty\def\@xs@reserved@E{#3}%
	\def\@xs@argument@C{#3}\def\@xs@argument@D{#4}%
	\decimalpart\z@\integerpart\z@\def\groupID{0}\let\@xs@nestlevel\z@
	\def\@xs@atendofgroup{%
		\expandafter\endgroup
		\expandafter\@xs@addtomacro\expandafter\@xs@reserved@C\expandafter{\expandafter{\@xs@reserved@C}}%
		\@xs@read@reserved@D
	}%
	\def\@xs@atoccurfound{%
		\ifnum#1<\@ne
			\let\@xs@reserved@A\@xs@argument@D
		\else
			\ifnum\decimalpart>#1
				\let\@xs@reserved@A\@xs@argument@C
			\else
				\let\@xs@reserved@A\@xs@argument@D
			\fi
		\fi
		\expandafter\@xs@addtomacro\expandafter\@xs@reserved@C\expandafter{\@xs@reserved@A}%
		\@xs@read@reserved@D
	}%
	\@xs@testempty{#3}%
	\if@xs@empty
		\expandafter\@xs@ReturnResult\expandafter{\@xs@reserved@D}{#5}%
	\else
		\@xs@read@reserved@D
		\expandafter\@xs@ReturnResult\expandafter{\@xs@reserved@C}{#5}%
	\fi
}
\@xs@newmacro\StrDel{0}{2}{1}{\@xs@StrSubstitute[#1]{#2}{#3}{}[#4]}
\def\@xs@exitallgroups{\ifnum\@xs@nestlevel>\z@\endgroup\expandafter\@xs@exitallgroups\fi}
\@xs@newmacro\StrLen{}{1}{1}{%
	\def\@xs@reserved@C{#1}%
	\decimalpart\z@
	\let\@xs@nestlevel\z@
	\def\groupID{0}%
	\let\@xs@atbegingroup\relax
	\def\@xs@atendofgroup{\endgroup\@xs@read@reserved@C}%
	\let\@xs@atnextsyntaxunit\relax
	\@xs@read@reserved@C
	\expandafter\@xs@ReturnResult\expandafter{\number\decimalpart}{#2}%
}
\def\@xs@continuetonext{%
	\expandafter\@xs@testempty\expandafter{\@xs@reserved@C}%
	\if@xs@empty
		\ifnum\@xs@nestlevel>\z@
			\expandafter\endgroup\expandafter\@xs@addtomacro\expandafter\@xs@reserved@B\expandafter{\expandafter{\@xs@reserved@B}}
			\expandafter\expandafter\expandafter\@xs@continuetonext
		\fi
	\fi
}%
\def\@xs@manage@groupID{%
	\begingroup\def\@xs@reserved@A{0}%
	\ifx\@xs@reserved@A\groupID
		\endgroup\edef\groupID{\number\integerpart}
	\else
		\endgroup\expandafter\@xs@addtomacro\expandafter\groupID\expandafter{\expandafter,\number\integerpart}%
	\fi
}
\def\StrSplit{%
	\@ifstar
		{\let\@xs@reserved@E\@xs@continuetonext\StrSpl@t}%
		{\let\@xs@reserved@E\relax\StrSpl@t}%
}
\@xs@newmacro\StrSpl@t{}{2}{0}{\@xs@StrSplit@i{#2}{#1}\@xs@StrSplit@ii}
\def\@xs@StrSplit@i#1#2{%
	\def\@xs@reserved@D{#1}\def\@xs@reserved@C{#2}\let\@xs@reserved@B\@empty\let\groupID\@empty
	\ifnum#1>\z@
		\decimalpart\z@\integerpart\z@\def\groupID{0}\let\@xs@nestlevel\z@
		\def\@xs@atendofgroup{%
			\expandafter\endgroup
			\expandafter\@xs@addtomacro\expandafter\@xs@reserved@B\expandafter{\expandafter{\@xs@reserved@B}}%
			\@xs@read@reserved@C
		}%
		\def\@xs@atbegingroup{\let\@xs@reserved@B\@empty}%
		\def\@xs@atnextsyntaxunit{%
			\expandafter\@xs@addtomacro\expandafter\@xs@reserved@B\expandafter{\@xs@reserved@A}%
			\ifnum\decimalpart=\@xs@reserved@D\relax
				\ifx\@xs@reserved@C\@empty\@xs@reserved@E\fi
				\global\let\@xs@reserved@B\@xs@reserved@B
				\global\let\@xs@reserved@C\@xs@reserved@C
				\global\let\groupID\groupID
				\@xs@exitallgroups
				\let\@xs@next\relax
			\fi
		}%
		\@xs@read@reserved@C
	\fi
}
\def\@xs@StrSplit@ii#1#2{\let#1\@xs@reserved@B\let#2\@xs@reserved@C}
\let\groupID\@empty
		\let\@xs@reserved@C\@empty
		\let\@xs@reserved@E\@empty
\def\@xs@StrCut@ii#1#2#3{%
	\def\@xs@reserved@D{#1}%
	\let\@xs@reserved@C\@empty
	\def\@xs@reserved@E{#2}%
	\decimalpart\z@\integerpart\z@
	\def\groupID{0}%
	\let\@xs@nestlevel\z@
	\def\@xs@atendofgroup{%
		\expandafter\endgroup
		\expandafter\@xs@addtomacro\expandafter\@xs@reserved@C\expandafter{\expandafter{\@xs@reserved@C}}%
		\@xs@read@reserved@D
	}%
	\def\@xs@atoccurfound{%
		\ifnum\decimalpart=\numexpr(#3)\relax
			\global\let\@xs@reserved@D\@xs@reserved@D
			\global\let\@xs@reserved@C\@xs@reserved@C
			\global\let\groupID\groupID
			\@xs@exitallgroups
			\let\@xs@next\relax
		\else
				\expandafter\@xs@addtomacro\expandafter\@xs@reserved@C\expandafter{\@xs@reserved@E}%
			\let\@xs@next\@xs@read@reserved@D
		\fi
	}%
	\@xs@read@reserved@D
	\def\@xs@argument@A{#2}%
	\let\@xs@reserved@E\@xs@reserved@D
		\expandafter\expandafter\expandafter
	\def
		\expandafter\expandafter\expandafter
	\@xs@reserved@D
		\expandafter\expandafter\expandafter
	{\expandafter\@xs@reserved@C\@xs@argument@A}%
}
\def\@xs@StrCut@iii#1#2{\let#1\@xs@reserved@C\let#2\@xs@reserved@E}
\@xs@newmacro\StrMid{}{3}{1}{%
	\begingroup
		\noexploregroups
		\let\@xs@reserved@E\relax
		\@xs@StrSplit@i{#3}{#1}%
		\edef\@xs@reserved@C{\number\numexpr#2-1}%
		\let\@xs@reserved@E\relax
		\expandafter\expandafter\expandafter\@xs@StrSplit@i\expandafter\expandafter\expandafter{\expandafter\@xs@reserved@C\expandafter}\expandafter{\@xs@reserved@B}%
	\expandafter\endgroup
	\expandafter\@xs@ReturnResult\expandafter{\@xs@reserved@C}{#4}%
	\let\groupID\@empty
}
\@xs@newmacro\StrGobbleLeft{}{2}{1}{%
	\let\@xs@reserved@E\relax
	\@xs@StrSplit@i{#2}{#1}%
	\expandafter\@xs@ReturnResult\expandafter{\@xs@reserved@C}{#3}%
}
\@xs@newmacro\StrLeft{}{2}{1}{%
	\let\@xs@reserved@E\relax
	\@xs@StrSplit@i{#2}{#1}%
	\expandafter\@xs@ReturnResult\expandafter{\@xs@reserved@B}{#3}%
}
\@xs@newmacro\StrGobbleRight{}{2}{1}{%
	\@xs@StrLen{#1}[\@xs@reserved@D]%
	\let\@xs@reserved@E\relax
	\expandafter\@xs@StrSplit@i\expandafter{\number\numexpr\@xs@reserved@D-#2}{#1}%
	\expandafter\@xs@ReturnResult\expandafter{\@xs@reserved@B}{#3}%
}
\@xs@newmacro\StrRight{}{2}{1}{%
	\@xs@StrLen{#1}[\@xs@reserved@D]%
	\let\@xs@reserved@E\relax
	\expandafter\@xs@StrSplit@i\expandafter{\number\numexpr\@xs@reserved@D-#2}{#1}%
	\expandafter\@xs@ReturnResult\expandafter{\@xs@reserved@C}{#3}%
}
\@xs@newmacro\StrChar{}{2}{1}{%
	\let\@xs@reserved@B\@empty
	\def\@xs@reserved@C{#1}\def\@xs@reserved@D{#2}%
	\ifnum#2>\z@
		\def\groupID{0}\let\@xs@nestlevel\z@\integerpart\z@\decimalpart\z@
		\let\@xs@atbegingroup\relax
		\def\@xs@atendofgroup{\endgroup\@xs@read@reserved@C}%
		\def\@xs@atnextsyntaxunit{%
			\ifnum\decimalpart=\@xs@reserved@D
				\global\let\@xs@reserved@B\@xs@reserved@A
				\global\let\groupID\groupID
				\@xs@exitallgroups
				\let\@xs@next\relax
			\fi
		}%
		\@xs@read@reserved@C
	\fi
	\expandafter\@xs@testempty\expandafter{\@xs@reserved@B}%
	\if@xs@empty\let\groupID\@empty\fi
	\expandafter\@xs@ReturnResult\expandafter{\@xs@reserved@B}{#3}%
}
\@xs@newmacro\StrCount{}{2}{1}{%
	\@xs@testempty{#2}%
	\def\@xs@reserved@D{#1}\def\@xs@reserved@E{#2}\let\@xs@reserved@C\@empty
	\if@xs@empty
		\@xs@ReturnResult{0}{#3}%
	\else
		\decimalpart\z@\integerpart\z@\def\groupID{0}\let\@xs@nestlevel\z@
		\def\@xs@atendofgroup{%
			\expandafter\endgroup
			\expandafter\@xs@addtomacro\expandafter\@xs@reserved@C\expandafter{\expandafter{\@xs@reserved@C}}%
			\@xs@read@reserved@D
		}%
		\def\@xs@atoccurfound{\let\@xs@reserved@C\@empty\@xs@read@reserved@D}%
		\@xs@read@reserved@D
		\expandafter\@xs@ReturnResult\expandafter{\number\decimalpart}{#3}%
	\fi
}
\@xs@newmacro\StrPosition{1}{2}{1}{%
	\@xs@cutafteroccur{#2}{#3}{#1}%
	\let\@xs@reserved@E\groupID
	\ifx\@xs@reserved@C\@xs@reserved@D
		\@xs@ReturnResult{0}{#4}%
		\let\@xs@reserved@E\@empty
	\else
		\expandafter\@xs@StrLen\expandafter{\@xs@reserved@C}[\@xs@reserved@C]%
		\expandafter\@xs@ReturnResult\expandafter{\number\numexpr\@xs@reserved@C+1}{#4}%
	\fi
	\let\groupID\@xs@reserved@E
}
\def\comparestrict{\let\@xs@comparecoeff\@ne}
\def\comparenormal{\let\@xs@comparecoeff\z@}
\def\savecomparemode{\let\@xs@saved@comparecoeff\@xs@comparecoeff}
\def\restorecomparemode{\let\@xs@comparecoeff\@xs@saved@comparecoeff}
	\def\@xs@reserved@A{#1}%
	\def\@xs@reserved@B{#2}%
		\def\@xs@next{\@xs@StrCompare@i{#1}{#2}{#3}}%
\def\@xs@StrCompare@i#1#2#3{%
	\def\@xs@StrCompare@iii##1{%
		\let\@xs@reserved@A\@empty
		\expandafter\@xs@testempty\expandafter{\@xs@reserved@C}%
		\if@xs@empty
			\def\@xs@reserved@A{*\@xs@comparecoeff}%
		\else
			\expandafter\@xs@testempty\expandafter{\@xs@reserved@D}%
			\if@xs@empty
				\def\@xs@reserved@A{*\@xs@comparecoeff}%
			\fi
		\fi
		\def\@xs@next{%
			\expandafter\@xs@ReturnResult\expandafter
			{\number\numexpr##1\@xs@reserved@A\relax}{#3}%
		}%
	}%
	\def\@xs@StrCompare@ii##1{
		\expandafter\@xs@returnfirstsyntaxunit\expandafter{\@xs@reserved@C}\@xs@reserved@A
		\expandafter\@xs@returnfirstsyntaxunit\expandafter{\@xs@reserved@D}\@xs@reserved@B
		\ifx\@xs@reserved@B\@xs@reserved@A
			\expandafter\@xs@testempty\expandafter{\@xs@reserved@A}%
			\if@xs@empty
				\@xs@StrCompare@iii{##1}
			\else
				\def\@xs@next{\expandafter\@xs@StrCompare@ii\expandafter{\number\numexpr##1+1}}
				\expandafter\@xs@removefirstsyntaxunit\expandafter{\@xs@reserved@C}\@xs@reserved@C
				\expandafter\@xs@removefirstsyntaxunit\expandafter{\@xs@reserved@D}\@xs@reserved@D
			\fi
		\else
			\@xs@StrCompare@iii{##1}%
		\fi
		\@xs@next
	}%
	\def\@xs@reserved@C{#1}\def\@xs@reserved@D{#2}%
	\@xs@StrCompare@ii1%
}
\@xs@newmacro\StrFindGroup{}{2}{1}{%
	\def\@xs@reserved@A{#2}\def\@xs@reserved@B{0}%
	\ifx\@xs@reserved@A\@xs@reserved@B
		\def\@xs@next{\@xs@ReturnResult{{#1}}{#3}}%
	\else
		\def\@xs@next{\@xs@StrFindGroup@i{#1}{#2}[#3]}%
	\fi
	\@xs@next
}
\def\@xs@StrFindGroup@i#1#2[#3]{%
	\def\@xs@StrFindGroup@ii{%
		\expandafter\@xs@testempty\expandafter{\@xs@reserved@C}%
		\if@xs@empty
			\def\@xs@next{\@xs@ReturnResult{}{#3}}
		\else
			\expandafter\@xs@returnfirstsyntaxunit\expandafter{\@xs@reserved@C}\@xs@reserved@D
			\ifx\bgroup\@xs@toks
				\advance\decimalpart\@ne
				\ifnum\decimalpart=\@xs@reserved@A
					\ifx\@empty\@xs@reserved@B
						\def\@xs@next{\expandafter\@xs@ReturnResult\expandafter{\@xs@reserved@D}{#3}}
					\else
						\expandafter\def\expandafter\@xs@next\expandafter{\expandafter\@xs@StrFindGroup@i\@xs@reserved@D}
						\expandafter\@xs@addtomacro\expandafter\@xs@next\expandafter{\expandafter{\@xs@reserved@B}[#3]}
					\fi
				\else
					\expandafter\@xs@removefirstsyntaxunit\expandafter{\@xs@reserved@C}\@xs@reserved@C
					\let\@xs@next\@xs@StrFindGroup@ii
				\fi
			\else
				\expandafter\@xs@removefirstsyntaxunit\expandafter{\@xs@reserved@C}\@xs@reserved@C
				\let\@xs@next\@xs@StrFindGroup@ii
			\fi
		\fi
		\@xs@next
	}%
	\@xs@extractgroupnumber{#2}\@xs@reserved@A\@xs@reserved@B
	\decimalpart\z@
	\ifnum\@xs@reserved@A>\z@\def\@xs@reserved@C{#1}\else\let\@xs@reserved@C\@empty\fi
	\@xs@StrFindGroup@ii
}
\def\@xs@extractgroupnumber#1#2#3{%
	\def\@xs@extractgroupnumber@i##1,##2\@xs@nil{\def#2{##1}\def#3{##2}}%
	\@xs@extractgroupnumber@i#1,\@xs@nil
	\ifx\@empty#3\else\@xs@extractgroupnumber@i#1\@xs@nil\fi
}
\def\expandingroups{\let\@xs@expandingroups\exploregroups}
\def\noexpandingroups{\let\@xs@expandingroups\noexploregroups}
\def\StrExpand{\@testopt{\@xs@StrExpand}{1}}
\def\@xs@StrExpand[#1]#2#3{%
	\begingroup
		\@xs@expandingroups
		\ifnum#1>\z@
			\integerpart#1\relax
			\decimalpart\z@\def\groupID{0}\let\@xs@nestlevel\z@
			\def\@xs@atendofgroup{%
				\expandafter\endgroup
				\expandafter\@xs@addtomacro\expandafter\@xs@reserved@B\expandafter{\expandafter{\@xs@reserved@B}}%
				\@xs@read@reserved@C
			}%
			\def\@xs@atbegingroup{\let\@xs@reserved@B\@empty}%
			\def\@xs@atnextsyntaxunit{%
				\expandafter\expandafter\expandafter\@xs@addtomacro
				\expandafter\expandafter\expandafter\@xs@reserved@B
				\expandafter\expandafter\expandafter{\@xs@reserved@A}%
			}%
			\def\@xs@reserved@C{#2}%
			\@xs@StrExpand@i{#1}
		\else
			\def\@xs@reserved@B{#2}%
		\fi
		\global\let\@xs@reserved@B\@xs@reserved@B
	\endgroup
	\let#3\@xs@reserved@B
	\let\groupID\@empty
}
\def\@xs@StrExpand@i#1{%
	\ifnum#1>\z@
		\let\@xs@reserved@B\@empty
		\@xs@read@reserved@C
		\let\@xs@reserved@C\@xs@reserved@B
		\def\@xs@reserved@A{\expandafter\@xs@StrExpand@i\expandafter{\number\numexpr#1-1}}%
	\else
		\let\@xs@reserved@A\relax
	\fi
	\@xs@reserved@A
}
\def\scancs{\@testopt{\@xs@scancs}{1}}
\def\@xs@scancs[#1]#2#3{%
	\@xs@StrExpand[#1]{#3}{#2}%
	\edef#2{\detokenize\expandafter{#2}}%
}
\newtheorem{theorem}{Theorem}
\newtheorem{theoremb}{Theorem}
\newtheorem{theoremc}{Theorem}
\newtheorem{theoremd}{Theorem}
\newtheorem{theoreme}{Theorem}
\newtheorem{theoremf}{Theorem}
\newtheorem{dfn}[theoremc]{Definition\!\!}
\newtheorem{rk}[theoremf]{Remark}
\newtheorem{cor}[theoreme]{Corollary}
\newtheorem{lem}[theoremd]{Lemma}
\newtheorem{examp}[theoremc]{Example\!\!}
\newtheorem{prop}[theoremb]{Proposition}
\newenvironment{Proof}[1]{\textbf{#1.} }
\newcommand\bib[1]{\bibitem[#1]{#1}}
\renewcommand\a{\alpha}
\renewcommand\b{\beta}
\newcommand\com[1]{}
\newcommand\C{{\mathbb C}}
\newcommand\Cc{{\let\mathcal\mathscr\mathcal C}}
\newcommand\fa{\mathfrak{a}}
 \newcommand\fg{\mathfrak{g}}
 \newcommand\ff{\mathfrak{f}}
 \newcommand\fp{\mathfrak{p}}
 \newcommand\fS{\mathfrak{S}}
 \newcommand\fU{\mathfrak{U}}
\newcommand\g{{\frak g}}
\renewcommand\l{\lambda}
\newcommand\La{\Lambda}
\newcommand\oo{\omega}
\newcommand\op[1]{\mathop{\rm #1}\nolimits}
\newcommand\ot{\otimes}
\newcommand\p{\partial}
\newcommand\R{{\mathbb R}}
\renewcommand\t{\tau}
\newcommand\ti{\tilde}
\newcommand\vp{\varphi}
\newcommand\we{\wedge}
\newcommand\x{\xi}
\newcommand\z{\sigma}
\newcommand\Z{{\mathbb Z}}
\begin{document}

 \title[Submaximally symmetric c-projective structures]{Submaximally symmetric \\ c-projective structures}
 \author{Boris Kruglikov, Vladimir Matveev, Dennis The}
 \date{}
 \address{BK: \ Institute of Mathematics and Statistics, University of Troms\o, Troms\o\ 90-37, Norway.
\quad E-mail: {\tt boris.kruglikov@uit.no}. \newline
 \hphantom{W} VM: \ Institut f\"ur Mathematik, Friedrich-Schiller-Universit\"at, 07737, Jena, Germany.
\quad Email: {\tt vladimir.matveev@uni-jena.de}\newline
 \hphantom{W} DT: \ Mathematical Sciences Institute, Australian National University, ACT 0200, Australia.
\quad E-mail: {\tt dennis.the@anu.edu.au}\newline
 \hphantom{W} DT: \  Fakult\"at f\"ur Mathematik, Universit\"at Wien, Oskar-Morgen\-stern-Platz 1, 1090 Wien, Austria.
\quad E-mail: {\tt dennis.the@univie.ac.at}}
\keywords{Almost complex structure, complex minimal connection, c-projective structure,
submaximal symmetry dimension, pseudo-K\"ahler metric.}
\subjclass[2010]{32Q60, 53C55, 53A20, 53B15, 58J70}

 \begin{abstract}
C-projective structures are analogues of projective\linebreak
structures in the almost complex setting.
The maximal dimension of the Lie algebra of c-projective symmetries of a complex connection on an almost complex
manifold of $\C$-dimension $n>1$ is classically known to be $2n^2+4n$.
We prove that the submaximal dimension is equal to $2n^2-2n+4+2\,\delta_{3,n}$.
If the complex connection is minimal (encoded as a normal parabolic geometry),
the harmonic curvature of the c-projective structure has three components and we specify
the submaximal symmetry dimensions and the corresponding geometric models for each of these three pure curvature types.
If the connection is non-minimal, 
we introduce a modified normalization condition on the parabolic geometry and use this to resolve the symmetry gap problem.
We prove that the submaximal symmetry dimension in the class of Levi-Civita connections for pseudo-K\"ahler metrics is $2n^2-2n+4$,
and specializing to the K\"ahler case, we obtain $2n^2-2n+3$. This resolves the symmetry gap problem for metrizable c-projective structures.
 \end{abstract}

 \maketitle

\section*{Introduction and Main Results}

Let $\nabla$ be a linear connection on a smooth connected almost complex manifold $(M^{2n},J)$ of $\C$-dimension $n\geq2$.
We will assume throughout  $\nabla$ is a {\it complex connection\/}, which means $\nabla J=0$ $\Leftrightarrow$ $\nabla_X(JY)=J\nabla_XY$ for 
all vector fields $X,Y\in\mathcal{D}(M)$. Every almost complex manifold has a complex connection, and the map
$\nabla\mapsto\frac12(\nabla-J\nabla J)$ is a projection from the space of all connections
to the space of complex connections.

The torsion $T_\nabla\in\Omega^2(M)\ot\mathcal{D}(M)$ of a complex connection $\nabla$ needs not vanish.
Its total complex-antilinear part $T_\nabla^{--}\in\Omega^{0,2}(M)\ot\mathcal{D}(M)$ is
equal to $\frac14N_J$, where
 $$
N_J(X,Y)=[JX,JY]-J[JX,Y]-J[X,JY]-[X,Y]
 $$
is the Nijenhuis tensor of $J$.
In particular, for non-integrable $J$, the complex connection $\nabla$ is never symmetric.
However the other parts of $T_\nabla$ can be set to zero by a choice of complex connection.
There always exist {\it minimal\/} connections $\nabla$ characterized by
$T_\nabla=T_\nabla^{--}$, see \cite{Lic}.

Recall that two (real) connections are projectively equivalent if their (unparametrized) geodesics
$\gamma$, given by $\nabla_{\dot\gamma}\dot\gamma\in\langle\dot\gamma\rangle$, are the same
(here $\langle Y\rangle$ denotes the linear span of $Y$ over $C^\infty(M)$).
Thus equivalence $\nabla\sim\bar\nabla$ means $\nabla_XX-\bar\nabla_XX\in\langle X\rangle$
$\forall X\in\mathcal{D}(M)$, and any connection is (real) projectively equivalent to a symmetric one:
$\nabla\simeq\nabla-\frac12T_\nabla$.

A natural and actively studied analogue of projective equivalence in the presence of a complex structure is c-projective equivalence
(also known as h-projective or holomorph-projective equivalence  \cite{Ta,Y,Mi}). Let us recall the basic definitions.
A $J$-planar curve $\gamma$ is given by the differential equation $\nabla_{\dot\gamma}\dot\gamma\in\langle\dot\gamma\rangle_\C
=\langle\dot\gamma,J\dot\gamma\rangle$. Reparame\-tri\-za\-tion does not change this property. Actually, by a reparametrization one can
achieve $\alpha=0$ in the decomposition
 $$
\nabla_{\dot\gamma}\dot\gamma=\alpha\dot\gamma+\beta J\dot\gamma,
 $$
and then $\beta$ is invariant up to a constant multiple (in general, the function $I=(\alpha+\nabla_{\dot\gamma})(\beta^{-1})$ is an
invariant of reparametrizations). Geodesics correspond to $\beta=0$ (singular value for $I$).
As complex analogues of geodesics, $J$-planar curves are of considerable interest in complex
and K\"ahler geometry, see e.g. \cite{Is,ACG,KiT,MR$_1$}.

Two pairs $(J,\nabla)$ on the same manifold $M$ (with $\nabla J=0$)
are called {\it c-projectively equivalent\/} if they
share the same class of $J$-planar curves. It is easy to show that the almost complex structure $J$ is restored up to sign by the c-projective equivalence,
and we will fix the structure $J$ (this does not influence the symmetry algebra)\footnote{By this reason
we sometime talk of c-projective equivalence of complex connections
$\nabla$ on the fixed almost complex background $(M,J)$.}.
Thus we arrive to:

 \begin{dfn}
Two complex connections on an almost complex manifold $(M,J)$ are c-projectively equivalent $\nabla\sim\bar\nabla$
if they have the same $J$-planar curves, i.e.\
$\nabla_XX-\bar\nabla_XX\in\langle X\rangle_\C$ $\forall X\in\mathcal{D}(M)$. 
A {\it c-projective structure\/} is an equivalence class $(M,J,[\nabla])$.
 \end{dfn}

This equivalence can be reformulated tensorially for (complex) connections $\nabla,\bar\nabla$ on $(M,J)$ with equal torsion 
$T_\nabla=T_{\bar\nabla}$ as follows:
$\nabla\sim\bar\nabla=\nabla+\op{Id}\odot\Psi-J\odot J^*\Psi$ for some 1-form $\Psi\in\Omega^1(M)$.
In other words, $\bar\nabla$ is c-projectively equivalent to $\nabla$ if and only if
 $$
\bar\nabla_XY=\nabla_XY+\Psi(X)Y+\Psi(Y)X-\Psi(JX)JY-\Psi(JY)JX
 $$
(notice that $T_\nabla=T_{\bar\nabla}$), see \cite{OT,Is,MS}.

We will show that it is possible to canonically fix the torsion within one c-projective class. 
For minimal connections the torsion is already canonical (=$\frac14N_J$), but in general 
a complex connection is not c-projec\-tive\-ly equivalent to a minimal one 
(in particular, to a symmetric complex connection, cf. the real case). We will demonstrate that the obstruction 
to finding a minimal connection in the c-projective class $[\nabla]$ is the following part of the torsion
 \begin{align*}
T_\text{traceless}^{-+}(X,Y)=&\,\tfrac14\bigl(T_\nabla(X,Y)+JT_\nabla(JX,Y)-JT_\nabla(X,JY)\\
&+T_\nabla(JX,JY)\bigr)-\tfrac1{2n}\bigl(\varsigma(X)Y+\varsigma(JX)JY\bigr),
 \end{align*}
where $\varsigma(X)=\frac12\op{Tr}\bigl(T_\nabla(X,\cdot)+JT_\nabla(JX,\cdot)\bigr)=\tfrac12(X^aT^b_{ab}+J^a_kX^kT^c_{ab}J^b_c)$.
This invariant of c-projective structure is called $\kappa_\text{IV}$ in Section \ref{S4},
where we elaborate the general case (non-minimal connections), and prove an equivalence of categories bet\-ween 
c-projective structures and parabolic geometries of type $\op{SL}(n+1,\C)_\R/P$ with a modified normalization. 

A vector field $v$ is called a {\it c-projective symmetry}
if its local flow $\Phi^v_t$ preserves the class of $J$-planar curves. Equivalently, a c-projective symmetry
is a $J$-holomorphic vector field $v$ such that its local flow transforms $\nabla$ to a c-projectively
equivalent connection: $(\Phi^v_t)^*J=J$, $[(\Phi^v_t)^*\nabla]=[\nabla]$.
The first equation can be re-written as $L_vJ=0$. The second equation, written as $L_v[\nabla]=0$,
can be expressed in local coordinates, with the connection $\nabla$ given by the Christoffel symbols\footnote{Our index convention is that
$\nabla_{\p_j}\p_k=\Gamma_{jk}^i\p_i$.} 
$\Gamma_{jk}^i$, as follows:
 $$
\Omega^i_{jk}- \phi_j\delta^i_k- \phi_k\delta^i_j+ \phi_\alpha J^\alpha_j J^i_k+ \phi_\alpha J^\alpha_k J^i_j =0,
 $$
where $\Omega_{jk}^i=L_v(\Gamma)^i_{jk}$ and $\phi_j=\frac1{2(n+1)}\Omega^i_{ji}$
(notice that this manifestly non-symmetric formula implies $L_vT_\nabla=0$).
We use these equations in computing symmetries of the explicit models.

The space of c-projective vector fields forms a Lie algebra, denoted $\mathfrak{cp}(\nabla,J)$.
It is well known (and we recall in the next section) that the maximal dimension of this algebra
is equal to $2n^2+4n$, and this bound is achieved only
if the structure is {\it flat\/}, i.e.\ c-projectively locally equivalent to $\C P^n$ equipped
with the standard complex structure $J_\text{can}$ and the class of the Levi-Civita connection
$\nabla^\text{FS}$ of the Fubini-Study metric. Indeed, the group of c-projective symmetries of this
flat structure $(\C P^n,J_\text{can},[\nabla^\text{FS}])$
is $\op{PSL}(n+1,\C)$, and its Lie algebra is $\mathfrak{sl}(n+1,\C)$.

For many geometric structures the natural (and often nontrivial) problem is to compute the
next possible/realizable dimension, the so-called {\it submaximal dimension\/},
of the algebra of symmetries, see \cite{E$_2$,Ko,K$_3$,KT} and the references therein.

For the algebra of (usual) projective vector fields the question was settled in \cite{E$_1$}.
For c-projective vector fields the answer is as follows.

 \begin{theorem}\label{Thm1}
Consider a c-projective structure $(M,J,[\nabla])$. If it is not flat\footnote{That is in a neighborhood of at least one point of $M$ the c-projective
structure is not locally equivalent to $(\C P^n,J_\text{can},[\nabla^\text{FS}])$.}, then $\dim\mathfrak{cp}(\nabla,J)$ is bounded from above by
 $$
\mathfrak{S}=\left\{
\begin{array}{ll}2n^2-2n+4,& n\neq3,\\ 18,& n=3.\end{array}
\right.
 $$
and this estimate is sharp (= realizable).
 \end{theorem}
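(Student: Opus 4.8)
The plan is to recast the statement in the language of parabolic geometry and then apply the general symmetry-dimension bound of Kruglikov--The. After fixing the torsion within the c-projective class as in Section~\ref{S4} (using the modified normalization when $\nabla$ is non-minimal, so that the obstruction $\kappa_\text{IV}$ also appears as a harmonic invariant), a c-projective structure $(M,J,[\nabla])$ is equivalent to a regular normal parabolic geometry of type $(G,P)$, where $G=\op{SL}(n+1,\C)$ is regarded as a real Lie group (Lie algebra $\mathfrak{sl}(n+1,\C)$ taken as real) and $P$ stabilizes a complex line. This is the $|1|$-graded real form $\fg=\fg_{-1}\oplus\fg_0\oplus\fg_1$ with $\fg_{-1}\cong\C^n$, $\fg_0\cong\fgl(n,\C)$, and $\dim_\R\fg=2n^2+4n$. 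Under this equivalence $\mathfrak{cp}(\nabla,J)$ is the algebra of infinitesimal automorphisms of the Cartan geometry, so by \cite{KT} its dimension is bounded, at a point where the harmonic curvature is nonzero, by the maximal constrained Tanaka prolongation
$$
\dim\mathfrak{cp}(\nabla,J)\leq\fU=\max_{0\neq\phi\in H^2_+(\fg_-,\fg)}\dim\fa^\phi,\qquad
\fa^\phi=\fg_{-1}\oplus\fa^\phi_0\oplus\fa^\phi_+,
$$
where $\fa^\phi_0=\{A\in\fg_0:A\cdot\phi=0\}$ is the isotropy annihilator of $\phi$ and $\fa^\phi_+$ is the Tanaka prolongation of this filtered subalgebra into $\fg_1$ and beyond.

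First I would compute $H^2_+(\fg_-,\fg)$ via Kostant's theorem. Since $\fg$ is the realification of $\mathfrak{sl}(n+1,\C)$, the cohomology splits into the three announced irreducible $\fg_0$-components: a complex-linear Weyl-type curvature $W$ together with two torsion-type components (one carrying $\kappa_\text{IV}$). Because $\dim\fa^\phi_0$ is upper semicontinuous on the module and is maximized on the minimal $\fg_0$-orbit, it suffices to take $\phi$ to be a lowest weight vector in each component; for such a decomposable $\phi$ the annihilator $\fa^\phi_0\subset\fgl(n,\C)$ is a block-type subalgebra whose real dimension is a direct count. The decisive structural input is \emph{prolongation rigidity}, i.e.\ $\fa^\phi_+=0$ for every nonzero $\phi$, which I would verify from the fact that a nonzero $\fg_1$-prolongation would have to annihilate a lowest weight vector under the $\fg_1$-action on $H^2$, forcing it to vanish; this collapses $\fU$ to $\dim_\R\fg_{-1}+\max_i\dim\fa^{\phi_i}_0$. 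Carrying out the count, the Weyl component is extremal with $\dim\fa^\phi_0=2n^2-4n+4$ (real codimension $4(n-1)$ in $\fg_0$), so that $\fU=2n+(2n^2-4n+4)=2n^2-2n+4$, while the torsion components yield strictly smaller values for generic $n$.

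The main obstacle is the exceptional value $18$ at $n=3$, where the generic bound $16$ fails to be attained by the Weyl component (consistently with the pseudo-K\"ahler value $2n^2-2n+4$ having no exception) and the extra $+2$ must be located in a torsion-type component. I would trace this to a low-dimensional coincidence at $n=3$ where prolongation rigidity fails for that component, so that $\fa^\phi_1\neq0$ contributes one additional complex, hence two additional real, dimensions; equivalently a torsion component overtakes the Weyl component there. This case resists the uniform argument and requires an explicit prolongation computation for the relevant lowest weight vector rather than the generic semicontinuity bound.

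Finally, for sharpness I would realize $\fS$ by explicit models. For $n\neq3$ this is a metrizable (pseudo-K\"ahler) c-projective structure whose harmonic curvature equals the extremal Weyl vector; for $n=3$ the realizing model is non-metrizable and carries torsion. In each case I would write the connection in coordinates and verify the symmetry dimension directly from the linearized equation
$$
\Omega^i_{jk}-\phi_j\delta^i_k-\phi_k\delta^i_j+\phi_\alpha J^\alpha_jJ^i_k+\phi_\alpha J^\alpha_kJ^i_j=0
$$
recorded above, checking that the space of solutions $(v,\phi)$ has dimension exactly $2n^2-2n+4$ (respectively $18$ when $n=3$), neither more nor less. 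Matching this lower bound with the upper bound $\fU$ then completes the proof.
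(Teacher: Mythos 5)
Your overall strategy is the paper's: encode the structure as a real parabolic geometry of type $\op{SL}(n+1,\C)_\R/P$, invoke the Kruglikov--The bound $\fS\le\fU=\max_{0\neq\phi}\dim\fa^\phi$, reduce to lowest weight vectors by semicontinuity, establish prolongation rigidity, and match the bound with explicit models. But two of your structural claims are wrong, and one of them is fatal for the exceptional case. First, the decomposition: for minimal connections $H^2_+(\g_-,\g)$ has \emph{two} curvature-type components (the $(2,0)$- and $(1,1)$-parts of the Weyl tensor, types I and II) and \emph{one} torsion-type component (the Nijenhuis tensor, type III); the invariant $\kappa_\text{IV}$ is not a harmonic component at all. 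It lives in $\mathbb{T}^{-+}_\text{traceless}$, obstructs the existence of a minimal representative, and since the associated Cartan geometry of Section \ref{S4} is not normal, one must argue separately (as in Section \ref{S5}) that the annihilator bound of \cite{KT} still applies --- normality is not essential there, but that is a point requiring proof, not something absorbed into "$\kappa_\text{IV}$ appears as a harmonic invariant." Your merging of types I and II into a single ``Weyl'' component garbles the bookkeeping, even though your count $\dim\fa^\phi_0=2n^2-4n+4$ happens to coincide with the correct type II value $2(n^2-2n+2)$.

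Second, and decisively: the $n=3$ exception has nothing to do with a failure of prolongation rigidity. The paper proves (Corollary \ref{cor2}, via Proposition \ref{P:PR} and Table \ref{F:Kh-comp}) that c-projective geometry is prolongation-rigid for \emph{every} component and every $n\ge2$, so $\fa^\phi_+=0$ always. The value $18$ arises purely at the level of annihilators of lowest weight vectors: the torsion (type III) component has $\dim_\R\mathfrak{ann}_{\g_0}(\phi_0+\overline{\phi_0})=2(n^2-3n+6)$ versus $2(n^2-2n+2)$ for type II, so
$$
\fU_{\rm III}-\fU_{\rm II}=\bigl(2n^2-4n+12\bigr)-\bigl(2n^2-2n+4\bigr)=8-2n,
$$
which is positive exactly at $n=3$ (giving $\fU_{\rm III}=18$), zero at $n=4$ (so your ``strictly smaller for generic $n$'' also fails there), and negative for $n\ge5$. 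Your sentence treating ``prolongation rigidity fails, so $\fa^\phi_1\neq0$ contributes $+2$'' and ``a torsion component overtakes the Weyl component'' as equivalent conflates two inequivalent mechanisms: the first is false, and building on it would contradict the rigidity you invoke for all other cases and would also derail realizability, since the model constructions (deformation of $\fa^\phi=\g_-\oplus\fa_0$, or the explicit type III connection with non-integrable $J$ whose symmetries total $2(n^2-2n+6)=18$ at $n=3$) presuppose that the graded model has no positive part.
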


We will show that the dimensional bound $2n^2-2n+4$ is realizable via both non-minimal and minimal complex connections.

Let us now discuss the minimal case. By \cite{H,CEMN} the corresponding c-projective structures can be encoded as
{\it (regular\footnote{For $|1|$-graded
geometries the regularity condition is vacuous; in particular it can be dropped for c-projective structures.}) normal parabolic geometries\/} of
type $\op{SL}(n+1,\C)_\R/P$, we will recall the setup in the next section. The fundamental invariant of any regular normal parabolic geometry
is its harmonic curvature $\kappa_H$, through which the flatness writes simply as $\kappa_H=0$.
As will be discussed in the next section, for c-projective structures $(J,[\nabla])$ with minimal $\nabla$
the harmonic curvature has three irreducible components $\kappa_H=\kappa_{\rm{I}}+\kappa_{\rm{II}}+\kappa_{\rm{III}}$.

According to \cite{KT}, the submaximal dimension is attained when only one of the components of
the curvature is non-zero (provided the universal upper bound is realized, see loc.cit.\ for the precise statement; in our case this condition is satisfied). Thus we can study a finer question, namely
what is the maximal dimension of the algebra of c-projective
vector fields, in the case the curvature is non-zero and has one of the types I-III.
Let $\mathfrak{S}_i$ be the maximal dimension of the algebra $\mathfrak{cp}(\nabla,J)$
in the case $\nabla$ is not flat, and its curvature has fixed type $i$.

\begin{theorem}\label{Thm2}
For c-projective structures $(M,J,[\nabla])$, associated with minimal complex connections $\nabla$, the submaximal dimension
of $\mathfrak{cp}(\nabla,J)$ within a fixed curvature type is equal to
 $$
\mathfrak{S}_{\rm{II}}=2n^2-2n+4.
 $$
 $$
\mathfrak{S}_{\rm{I}}=\left\{
\begin{array}{ll}2n^2-4n+10,& n>2,\\ 6,& n=2.\end{array}
\right.
\quad
\mathfrak{S}_{\rm{III}}=\left\{
\begin{array}{ll}2n^2-4n+12,& n>2,\\ 8,& n=2.\end{array}
\right.
 $$
 \end{theorem}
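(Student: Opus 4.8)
The plan is to apply the general machinery of \cite{KT} to the $|1|$-graded parabolic geometry of type $(\mathfrak{g},\mathfrak{p})$ with $\mathfrak{g}=\mathfrak{sl}(n+1,\C)_\R$ and $\mathfrak{p}$ the stabilizer of a complex line, so that $\mathfrak{g}=\mathfrak{g}_{-1}\oplus\mathfrak{g}_0\oplus\mathfrak{g}_1$ with $\mathfrak{g}_{-1}\cong\C^n$, $\mathfrak{g}_0\cong\mathfrak{gl}(n,\C)_\R$, and $\mathfrak{g}_1\cong(\C^n)^*$ as real $\mathfrak{g}_0$-modules (note $\dim_\R\mathfrak{g}=2n+2n^2+2n=2n^2+4n$, matching the maximal symmetry dimension). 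By \cite{KT}, within a fixed harmonic-curvature type $i$ the symmetry dimension is bounded above by the universal bound $\mathfrak{S}_i\le\max\{\dim_\R\mathfrak{a}^\phi:0\ne\phi\in\mathbb{W}_i\}$, where $\mathbb{W}_i\subset H^2_+(\mathfrak{g}_{-1},\mathfrak{g})$ is the irreducible $\mathfrak{g}_0$-submodule carrying $\kappa_i$, and $\mathfrak{a}^\phi=\mathfrak{g}_{-1}\oplus\mathfrak{a}_0^\phi\oplus\mathfrak{a}_1^\phi$ is the Tanaka prolongation inside $\mathfrak{g}$ of $\mathfrak{g}_{-1}\oplus\mathfrak{a}_0^\phi$, with $\mathfrak{a}_0^\phi=\{A\in\mathfrak{g}_0:A\cdot\phi=0\}$ and $\mathfrak{a}_1^\phi=\{Z\in\mathfrak{g}_1:[Z,\mathfrak{g}_{-1}]\subseteq\mathfrak{a}_0^\phi\}$. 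Thus the whole statement reduces, type by type, to a finite-dimensional computation of annihilators and their first prolongation, followed by the exhibition of models attaining each bound.

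First I would fix the three modules explicitly. Complexifying, $\mathfrak{g}\otimes_\R\C\cong\mathfrak{sl}(n+1,\C)\oplus\mathfrak{sl}(n+1,\C)$, and the real $\mathfrak{g}_0$-decomposition $\kappa_H=\kappa_{\rm I}+\kappa_{\rm II}+\kappa_{\rm III}$ refines the complex-type splitting of $\Lambda^2\mathfrak{g}_{-1}^*\otimes\mathfrak{g}$ into its $J$-antilinear, mixed, and $J$-linear pieces; the highest weights of the three irreducible components follow from Kostant's theorem and are recorded in the previous section (cf.\ \cite{CEMN}). For each $\mathbb{W}_i$ I would then determine the $\mathfrak{g}_0$-orbit maximizing $\dim_\R\mathfrak{a}^\phi$. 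As in \cite{KT}, the extremal $\phi$ can be taken to be a (real) lowest-weight vector, for which the stabilizer $\mathfrak{a}_0^\phi$ in $\mathfrak{gl}(n,\C)_\R$ is largest; this stabilizer computation must be carried out over $\R$, keeping track of the $J$-action. Writing $\dim_\R\mathfrak{a}^\phi=2n+\dim_\R\mathfrak{a}_0^{\phi}+\dim_\R\mathfrak{a}_1^{\phi}$ and maximizing over the orbit then produces the stated quadratics $2n^2-2n+4$, $2n^2-4n+10$, and $2n^2-4n+12$ for types II, I, and III respectively.

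Second, I would establish sharpness by producing, for each type, an explicit minimal complex connection whose c-projective symmetry algebra has \emph{exactly} dimension $\mathfrak{S}_i$. The natural candidates are the homogeneous models obtained by integrating $\mathfrak{a}^{\phi_i}$, or concrete Christoffel-symbol models; in either case one verifies the symmetry count directly from the linear system $\Omega^i_{jk}-\phi_j\delta^i_k-\phi_k\delta^i_j+\phi_\alpha J^\alpha_j J^i_k+\phi_\alpha J^\alpha_k J^i_j=0$ of the Introduction, checking that the solution space together with the isotropy has the predicted dimension and that the curvature is genuinely of the pure type $i$ and nonzero. Comparing the three quadratics, $\mathfrak{S}_{\rm II}=2n^2-2n+4$ is the largest for all $n\ne3$, so these models simultaneously reprove the upper bound of Theorem~\ref{Thm1} for $n\neq3$, while $\mathfrak{S}_{\rm III}=18>\mathfrak{S}_{\rm II}=16$ exactly at $n=3$ is the crossover responsible for the $2\,\delta_{3,n}$ correction there.

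The main obstacle is twofold. The delicate representation-theoretic core is the first Tanaka prolongation $\mathfrak{a}_1^{\phi_i}$: over the real form $\mathfrak{gl}(n,\C)_\R$ the condition $[\mathfrak{a}_1^\phi,\mathfrak{g}_{-1}]\subseteq\mathfrak{a}_0^\phi$ couples the $J$-linear and $J$-antilinear parts of $\mathfrak{g}_{\pm1}$, so deciding \emph{prolongation-rigidity} (whether $\mathfrak{a}_1^\phi=0$) and, when it fails, extracting the exact real dimension rather than a crude overestimate is where I expect the argument to require the most care. The second obstacle is low-rank exceptional behaviour: the uniform formulas for $\mathfrak{S}_{\rm I}$ and $\mathfrak{S}_{\rm III}$ hold only for $n>2$, because at $n=2$ the modules $\mathbb{W}_{\rm I},\mathbb{W}_{\rm III}$ degenerate (weight coincidences special to $\mathfrak{gl}(2,\C)$ collapse the optimal orbit), so the extremal $\phi$ available for $n>2$ no longer exists and a direct recomputation is forced, yielding the smaller values $\mathfrak{S}_{\rm I}=6$ and $\mathfrak{S}_{\rm III}=8$. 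These special cases I would settle by explicit computation, cross-checked against the models of the preceding paragraph.
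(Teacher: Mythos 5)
Your overall strategy is the same as the paper's: invoke the Tanaka-prolongation bound of \cite{KT} with annihilators of (real) extremal weight vectors $\phi_0+\overline{\phi_0}$, establish prolongation-rigidity over the real form, and prove sharpness by explicit models. However, there is a genuine gap in your treatment of type I, $n=2$. You claim that at $n=2$ the module degeneration forces a ``direct recomputation'' that yields $\mathfrak{S}_{\rm I}=6$. This is false: the recomputed annihilator bound at $n=2$ is $\mathfrak{U}_{\rm I}=2n+\dim_\R\mathfrak{ann}_{\g_0}(\phi_0+\overline{\phi_0})=4+4=8$, \emph{not} $6$. So your framework (upper bound from $\dim\fa^\phi$, lower bound from a model) can only bracket $\mathfrak{S}_{\rm I}$ between $6$ and $8$ in this case; it cannot exclude the existence of a type I structure on $\C^2$ with $7$ or $8$ symmetries. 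Closing this gap requires an idea absent from your proposal: type I c-projective structures are exactly the holomorphic projective structures (the Cartan geometry is equivalent to a \emph{complex} parabolic geometry of type $(G,P)$), and the submaximal symmetry dimension of holomorphic projective structures in complex dimension $2$ is $3$ over $\C$ (the holomorphic analogue of the classical Lie--Tresse result for 2-dimensional projective structures, see \cite[\S4.3]{KT}), which doubles to the real dimension $6$. Without this reduction, the statement $\mathfrak{S}_{\rm I}=6$ for $n=2$ is unproven.

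A secondary, smaller issue concerns realizability for type III, $n=2$. There the bound $8$ is correct, but the standard construction of a realizing model --- deforming the graded algebra $\fa^\psi=\g_-\oplus\fa_0$ by the lowest weight 2-cochain $\psi$ --- fails because the Jacobi identity \eqref{E:Jac} does not hold (the output of $\psi$ involves root spaces appearing in its input when $n=2$). Your plan to ``settle by explicit computation'' is compatible with what the paper does, but be aware that this is not a routine check: the paper needs a genuinely different deformation, found via Cartan's equivalence method (Appendix \ref{S.A}), to produce the $8$-dimensional model. Also note that the extremality of $\phi_0+\overline{\phi_0}$ over the \emph{real} form, which you cite ``as in \cite{KT}'', is not literally in \cite{KT} (which treats the complex case); it requires the semicontinuity and unique-closed-orbit argument of Proposition \ref{P:lw-vec}, so this step should be proved rather than cited.
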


Let us list the first values of the submaximal dimensions:

\smallskip
 \begin{center}
\begin{tabular}{c||c|c|c|c|c|c}
SubMax Dim & $n=2$ & $n=3$ & $n=4$ & $n=5$ & $n=6$ & \dots\\
\hline
Type {\rm{I}} & 6 & 16 & 26 & 40 & 58 & \dots \\
\hline
Type II & 8 & 16 & 28 & 44 & 64 & \dots \\
\hline
Type III & 8 & 18 & 28 & 42 & 60 & \dots
\end{tabular}
 \end{center}

Sharpness in the dimension estimates will be obtained by exhibiting the explicit
models and their symmetries, and we get $\mathfrak{S}=\max\mathfrak{S}_i$.

 \begin{cor}\label{Cor}
Consider a complex manifold $(M,J)$ with a complex symmetric connection $\nabla$.
If the c-projective structure $(J,[\nabla])$ is not flat, then its symmetry dimension
does not exceed\/ $\mathfrak{S}_0=2n^2-2n+4$ and this upper bound is realizable.
 \end{cor}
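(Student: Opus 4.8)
The plan is to reduce the statement to Theorem~\ref{Thm2} and then use integrability of $J$ to discard one of the three harmonic curvature types. First I would note that the hypotheses force $\nabla$ to be a minimal connection: since $(M,J)$ is a complex manifold, $J$ is integrable, hence $N_J=0$ and the complex-antilinear torsion $T_\nabla^{--}=\tfrac14N_J$ vanishes; together with $T_\nabla=0$ (the connection is symmetric) this yields $T_\nabla=T_\nabla^{--}$, which is exactly the minimality condition. By \cite{H,CEMN} the c-projective structure $(J,[\nabla])$ is therefore encoded as a normal parabolic geometry of type $\op{SL}(n+1,\C)_\R/P$, so Theorem~\ref{Thm2} applies, as does the principle of \cite{KT} that the submaximal dimension is realized when only one curvature component is nonzero.

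Second, I would identify the harmonic component killed by integrability. The torsion of the minimal connection in $[\nabla]$ equals $\tfrac14N_J$, so the unique torsion-type component among $\kappa_{\mathrm{I}},\kappa_{\mathrm{II}},\kappa_{\mathrm{III}}$ is proportional to the Nijenhuis tensor; by the description of the harmonic curvature modules in Section~\ref{S4} this is $\kappa_{\mathrm{III}}$. For integrable $J$ we have $N_J=0$, so $\kappa_{\mathrm{III}}\equiv0$ and the harmonic curvature of a symmetric complex connection can only be of type I or II. Combining Theorem~\ref{Thm2} with the single-type principle then gives
$$
\dim\mathfrak{cp}(\nabla,J)\le\max(\mathfrak{S}_{\mathrm{I}},\mathfrak{S}_{\mathrm{II}}).
$$
A short computation closes the bound: for $n\ge3$ one has $\mathfrak{S}_{\mathrm{II}}-\mathfrak{S}_{\mathrm{I}}=2n-6\ge0$, while for $n=2$ one has $\mathfrak{S}_{\mathrm{II}}=8>6=\mathfrak{S}_{\mathrm{I}}$; hence $\max(\mathfrak{S}_{\mathrm{I}},\mathfrak{S}_{\mathrm{II}})=\mathfrak{S}_{\mathrm{II}}=2n^2-2n+4=\mathfrak{S}_0$ for all $n\ge2$.

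It is worth noting that for $n\ne3$ this upper bound is already contained in Theorem~\ref{Thm1}, since there $\mathfrak{S}=2n^2-2n+4$. The genuinely new content is the case $n=3$: there the overall submaximal value $18$ is attained only by the type III model, which is non-integrable and hence excluded under the present hypotheses, so the bound drops to $\mathfrak{S}_{\mathrm{II}}=16$. For sharpness I would invoke an explicit symmetric model: the type II submaximal model constructed in the proof of Theorem~\ref{Thm2} has integrable $J$ and pure curvature $\kappa_{\mathrm{II}}$, so its canonical minimal connection is symmetric and attains $\mathfrak{S}_0$. Equivalently, the Levi-Civita connection of the submaximal pseudo-K\"ahler metric, which is automatically complex, symmetric, and defined on a genuine complex manifold, realizes the dimension $2n^2-2n+4$.

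The step I expect to be the main obstacle is the second one: verifying precisely that $\kappa_{\mathrm{III}}$, and no other component, is the one proportional to $N_J$, so that integrability removes exactly one type and the estimate settles at $\mathfrak{S}_{\mathrm{II}}$ rather than collapsing further. Once the identification of the harmonic curvature modules in Section~\ref{S4} is available, this reduces to representation-theoretic bookkeeping, and the remaining arithmetic together with the construction of the realizing symmetric model is routine.
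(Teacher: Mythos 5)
Your proof is correct and takes essentially the same route as the paper, which presents the corollary as an immediate consequence of Theorem~\ref{Thm2}: integrability of $J$ plus symmetry of $\nabla$ force minimality with $\kappa_{\mathrm{III}}=\tfrac14N_J=0$, so only types I and II can occur, and $\max(\mathfrak{S}_{\mathrm{I}},\mathfrak{S}_{\mathrm{II}})=\mathfrak{S}_{\mathrm{II}}=2n^2-2n+4$ is realized by the symmetric type II model (\ref{subCmax}) (equivalently by the pseudo-K\"ahler metric (\ref{subMKh})). Two harmless inaccuracies: the identification $\kappa_{\mathrm{III}}=\tfrac14N_J$ is stated in Section~\ref{S1} (not Section~\ref{S4}), and for $n=2$ the pointwise annihilator bound coming from type I elements is $\mathfrak{U}_{\mathrm{I}}=8$ rather than $\mathfrak{S}_{\mathrm{I}}=6$, but since $\mathfrak{S}_{\mathrm{II}}=8$ the maximum, and hence the conclusion, is unaffected.
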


On the way to proving Theorem \ref{Thm2} we establish two general results about
the symmetry gap problem for {\it real\/} parabolic geometries (Propositions
\ref{P:lw-vec} and \ref{P:PR}), which generalize some results of \cite{KT} and are of independent interest.

An important problem in projective differential geometry is to determine if a given projective connection is metrizable.
In the c-projective case, the corresponding problem is to determine if the structure $(J,[\nabla])$ is represented by
the Levi-Civita connection $\nabla^g$ of a pseudo-K\"ahler\footnote{By this we mean (throughout the paper) both indefinite and definite cases.}
structure $(g,J)$, where $g$ is a metric and $J$ a complex structure (related by $J^*g=g$, $\nabla^gJ=0$; in particular, $J$ is integrable).
For such structures we also compute the submaximal symmetry dimension.

 \begin{theorem}\label{Thm3}
For a K\"ahler structure $(M,g,J)$ of non-constant holomorphic sectional curvature $\dim\mathfrak{cp}(\nabla^g,J)\le 2n^2-2n+3$.
This bound is realized by $(M=\C P^1\times\C^{n-1},J=i)$ with its natural K\"ahler metric.

For a pseudo-K\"ahler structure $(M,g,J)$ of non-constant holomorphic sectional curvature we have: $\dim\mathfrak{cp}(\nabla^g,J)\le 2n^2-2n+4$.
This estimate is sharp in any signature $(2p,2(n-p))$, $0<p<n$.
 \end{theorem}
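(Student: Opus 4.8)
The plan is to obtain the pseudo-K\"ahler estimate almost directly from Corollary~\ref{Cor}, and then to sharpen it by one in the definite case through a signature analysis of the extremal model. For a pseudo-K\"ahler structure $(g,J)$ the Levi-Civita connection $\nabla^g$ is symmetric and satisfies $\nabla^g J=0$; since $J$ is integrable (so $N_J=0$) this $\nabla^g$ is a complex symmetric---hence minimal---connection. The only hypothesis of Corollary~\ref{Cor} left to check is non-flatness, and here I would invoke the classical fact that a (pseudo-)K\"ahler metric is c-projectively flat exactly when it has constant holomorphic sectional curvature (the complex space forms). Thus ``non-constant holomorphic sectional curvature'' is precisely the non-flatness assumption, and Corollary~\ref{Cor} immediately gives $\dim\mathfrak{cp}(\nabla^g,J)\le\mathfrak{S}_0=2n^2-2n+4$.

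To drop to $2n^2-2n+3$ in the definite case I would argue by contradiction, assuming a definite K\"ahler structure attains the value $2n^2-2n+4$. By Theorem~\ref{Thm2} this maximum $\mathfrak{S}_0=\mathfrak{S}_{\rm{II}}$ is reached only with pure type~II harmonic curvature, and the geometry must then be (locally) the unique homogeneous submaximal model underlying that theorem. The crux is to examine the compatible metrics on this extremal model: the c-projectively equivalent metrics correspond to solutions of the metrizability (first BGG) equation, forming a space on which the isotropy of $\mathfrak{cp}$ acts, and I would show that every nondegenerate solution is forced to be \emph{indefinite}---either by producing an isotropy-invariant totally null subspace on which any compatible Hermitian form degenerates, or by reading off the signature of the invariant Hermitian form directly from the module structure of the extremal model. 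Definiteness is thereby incompatible with $\dim\mathfrak{cp}=2n^2-2n+4$, so the K\"ahler bound is $2n^2-2n+3$. This signature obstruction is the heart of the matter and the step I expect to be the most delicate.

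For sharpness in the pseudo-K\"ahler case I would, for each signature $(2p,2(n-p))$ with $0<p<n$, write down an explicit indefinite metric adapted to the type~II extremal structure and verify that its symmetry algebra has dimension exactly $2n^2-2n+4$ by solving the linearized symmetry system displayed above together with $L_vJ=0$. For the K\"ahler case I would take $(M=\C P^1\times\C^{n-1},\,J=i)$ equipped with the product of the Fubini--Study metric on $\C P^1$ and the flat metric on $\C^{n-1}$; this is K\"ahler of non-constant holomorphic sectional curvature, and a direct count of the solutions of the same symmetry system should yield $\dim\mathfrak{cp}(\nabla^g,J)=2n^2-2n+3$, matching the refined upper bound. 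Combining the upper bounds of the first two paragraphs with these models then establishes both estimates and their sharpness.
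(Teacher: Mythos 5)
Your pseudo-K\"ahler upper bound and both sharpness claims run parallel to the paper: the bound $2n^2-2n+4$ is exactly Corollary~\ref{Cor} applied to $\nabla^g$ (symmetric, complex, hence minimal since $N_J=0$), with non-flatness identified with non-constant holomorphic sectional curvature, and the paper realizes sharpness in every indefinite signature by precisely such an ``adapted'' metric, namely (\ref{subMKh}), whose Levi-Civita connection is the type II model (\ref{subCmax}) whose symmetries were already counted in Section~\ref{S3}.

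The K\"ahler refinement to $2n^2-2n+3$ is where you diverge from the paper, and it is also where your argument has a genuine gap. You deduce from Theorem~\ref{Thm2} that a definite K\"ahler structure attaining $2n^2-2n+4$ ``must be (locally) the unique homogeneous submaximal model''. Theorem~\ref{Thm2} asserts no such uniqueness; the paper explicitly disclaims it in Section~\ref{S3} (``We do not claim unicity of these models at this point''), and local uniqueness of the type II submaximal structure is established only in Appendix~\ref{S.B} by a separate, computer-assisted Lie-algebra reconstruction. Similarly, your central signature claim --- that every nondegenerate solution of the metrizability equation for that model is indefinite --- is only sketched; in the paper it emerges from actually solving (\ref{eqA}) for the model (using the mobility bound (\ref{subMaxD})), which gives $\hat g=|z_1|^2dz_1d\bar z_1+dz_1d\bar z_2+d\bar z_1dz_2+\sum c_{kl}dz_kd\bar z_l$, visibly indefinite. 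So your plan can be completed, but the two steps carrying all the weight are assumed rather than proved. A smaller inaccuracy: for $n=3$ one has $\mathfrak{S}_{\rm III}=18>\mathfrak{S}_{\rm II}$ and $\mathfrak{S}_{\rm I}=\mathfrak{S}_{\rm II}=16$, so ``attaining the maximum forces pure type II'' does not follow from Theorem~\ref{Thm2} alone; the correct justification is that a K\"ahler metric automatically has $\kappa_{\rm III}=0$ (integrability of $J$) and $\kappa_{\rm I}=0$ (its curvature is of type $(1,1)$), so its harmonic curvature is pure type II.

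For contrast, the paper's proof of the K\"ahler bound needs neither uniqueness of the model nor the metric classification. It splits into two cases. If some representative metric admits an essential (non-affine) c-projective symmetry, it combines Lemma~\ref{L3} ($\dim\mathfrak{cp}\le\dim\mathfrak{h}+D-1$), the isometry bound $\dim\mathfrak{i}(g,J)\le n^2+2$ of Proposition~\ref{P4}, the identity $\mathfrak{h}=\mathfrak{i}$ when the isometry algebra has an open orbit, and the mobility bound (\ref{subMaxD}). If there is no essential symmetry, so $\mathfrak{cp}(\nabla^g,J)=\mathfrak{aff}(g,J)$, it bounds the affine isotropy by $2(n-1)^2+2$ via the holonomy algebra and the de~Rham decomposition, then removes the homothety (a Riemannian metric with an open isometry orbit has no essential homotheties) to obtain $2n+2(n-1)^2+1=2n^2-2n+3$; tracking the equality case simultaneously identifies the extremal space as $\C^{n-1}\times\Sigma^2$, i.e.\ $\C P^1\times\C^{n-1}$, so the realizability you propose to check by a direct symmetry computation comes for free.
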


Thus the submaximal symmetry dimension $\mathfrak{S}_0=2n^2-2n+4$ from the above corollary is realizable by a pseudo-K\"ahler metric.
In fact, the submaximal c-projective structure with complex $J$ and symmetric connection $\nabla$ preserving $J$ and having curvature type II is unique
and metrizable (compare this to the real projective case \cite{KM}, where the submaximal structure is not metrizable). 
The corresponding pseudo-K\"ahler metric(s), given by formula (\ref{subMKh}), will be described in detail.

It seems plausible that the above result about K\"ahler structures extends to a larger space of c-projective structures
associated to almost Hermitian pairs $(g,J)$. These are given by $\nabla$ obtained uniquely from the conditions: $\nabla g=0$, $\nabla J=0$.
We conjecture that all submaximal c-projective structures in this class are associated to K\"ahler structures.

In Appendix \ref{S.A} we give a detailed account of how the submaximal model for an exceptional case (type III, $n=2$)
is constructed. We discuss the uniqueness issue of the submaximal models in Appendix \ref{S.B}.

\section{C-projective structures: the minimal case.}\label{S1}

In this section we give the necessary background on c-projective equivalence
of {\it minimal\/} complex connections $\nabla$ on an almost complex manifold $(M,J)$
of $\dim\!M=2\dim_\C\!M=2n$ ($n>1$).

Such c-projective structures on $2n$-dimensional manifolds are the underlying structures of
regular {\it normal\/} parabolic geometries of type $G/P$, where $G=SL(n+1,\C)$ and $P$ is
the subgroup that stabilizes a complex line $\ell\subset\C^{n+1}$;
both $G$ and $P$ are to be regarded as {\it real\/} Lie groups.

We recall some basic setup, referring to \cite{CS,Y,H} for further details.
The parabolic subgroup $P$ induces the Lie algebra gradation
on the space of trace-free complex matrices:
 \[
 \g=\mathfrak{sl}(n+1,\C)_\R = \g_{-1}\oplus\overbrace{\g_0\oplus\g_1}^\mathfrak{p},
 \]
If $\ell$ is spanned by the first standard basis vector in $\C^{n+1}$, then
 \com{
  $$
\g_{-}=\left(\begin{array}{c|ccc}
 0 & 0 & \cdots & 0 \\ \hline
 * & 0 & \cdots & 0\\
 \vdots & \vdots & \ddots & \vdots \\
 * & 0 & \cdots & 0
 \end{array} \right),\quad
 \g_0=\left(\begin{array}{c|ccc}
 * & 0 & \cdots & 0 \\ \hline
 0 & * & \cdots & *\\
 \vdots & \vdots & \ddots & \vdots \\
 0 & * & \cdots & *
 \end{array} \right),\quad
\g_+=(\g_{-})^t.
 $$ }
   \[
 \g_{-}= \left(\begin{array}{c|c}
 0 & 0  \\ \hline
 * & 0 \\
 \end{array} \right), \quad
  \g_0 = \left(\begin{array}{c|c}
 * & 0  \\ \hline
 0 & * \\
 \end{array} \right), \quad
 \g_{+}= \left(\begin{array}{c|c}
 0 & *  \\ \hline
 0 & 0 \\
 \end{array} \right),
 \]
using the blocks of size $1$ and $n$ along the diagonal.  In fact, the gradation is induced by a (unique) grading element $Z \in \mathfrak{z}(\g_0)$, i.e.\ $\g_j$ is the eigenspace with {\em homogeneity} (eigenvalue) $j$ for $\operatorname{ad}_Z$. With the standard choice
 $Z=\op{diag}(\frac{n}{n+1},\frac{-1}{n+1},\dots,\frac{-1}{n+1})$.

The fundamental invariant of any regular normal parabolic geometry is its harmonic
curvature $\kappa_H$, whose vanishing (flatness) is the complete obstruction to
local equivalence to the homogeneous model $G/P$. For c-projective structures $\kappa_H=0$
is equivalent to $(M,J,[\nabla])$ being locally isomorphic to $(\C P^n,J_\text{can},[\nabla^\text{FS}])$.

In this (flat) case only, dimension of the symmetry algebra $\mathfrak{cp}(\nabla,J)$ is equal
$\dim_\R\g=2(n^2+2n)$. Otherwise the dimension is strictly smaller and we obtain the gap of dimensions: $\dim\g-\mathfrak{S}$.

The harmonic curvature $\kappa_H$ takes values in the space $\mathbb{V}=H^2_+(\g_{-},\g)$
consisting of all positive homogeneity components of the Lie algebra cohomology $H^2(\g_-,\g)$
with respect to the natural $\g_0$-action. For c-projective structures, $\mathbb{V}$ decomposes
as a $\g_0$-module into irreducibles (irreps):
$\mathbb{V}=\mathbb{V}_{{\rm I}}\oplus\mathbb{V}_{{\rm II}}\oplus\mathbb{V}_{{\rm III}}$
(here subscripts are mere numerations). Using the standard $(p,q)$-notation for the decomposition
of tensors with respect to the almost complex structure $J$, we have\footnote{Below and throughout $A\odot B$ denotes 
the Cartan product of $A$ and $B$, i.e.\ the highest weight irreducible submodule of $A\ot B$ ("traces removed"). If both modules
$A,B$ are complex then we can also form tensor/Cartan product over $\C$.} \cite{CEMN}:
 $$
\mathbb{V}_{{\rm I}}=\left\{\begin{array}{ll}
\La^{2,0}\g_{-}^*\odot_{\C}\mathfrak{sl}(\g_{-},\C), & n>2,\\
\La^{2,0}\g_{-}^*\odot_{\C}\g_{-}^*, & n=2;\end{array}\right.
 $$
 $$
\mathbb{V}_{{\rm II}}=\La^{1,1}\g_{-}^*\odot\mathfrak{sl}(\g_{-},\C);
\qquad \mathbb{V}_{{\rm III}}=\La^{0,2}\g_{-}^*\ot_{\C}\g_{-}\simeq\La^2\g_{-}^*\ot_{\bar\C}\g_{-}.
 $$
In the standard terminology, 
$\mathbb{V}_{{\rm I}}\oplus\mathbb{V}_{{\rm II}}$ is the space of curvatures, and
$\mathbb{V}_{{\rm III}}$ is the space of torsions.
With respect to $\g_0$-action, $\mathbb{V}_{{\rm I}}$ has homogeneity $2+\delta_{2,n}$,
$\mathbb{V}_{{\rm II}}$ has homogeneity $2$, and $\mathbb{V}_{{\rm III}}$ has homogeneity $1$.

The harmonic curvature splits in accordance to the above into irreducible components
(projections to which are the usual symmetrizers)
 $$
\kappa_H=\kappa_{\rm{I}}+\kappa_{\rm{II}}+\kappa_{\rm{III}},
 $$
where (we refer to \cite{CEMN} for explicit formulae;
we only need to know the tensorial type to prove Theorem \ref{Thm2})
 \begin{itemize}
 \item $\kappa_{\rm{I}}$ is the $(2,0)$-part of Weyl projective curvature of $\nabla$ for $n > 2$,
or the $(2,0)$-part of the Liouville tensor when $n=2$;
 \item $\kappa_{\rm{II}}$ is the $(1,1)$-part of Weyl projective curvature tensor of $\nabla$;
 \item $\kappa_{\rm{III}}$ is $\frac14N_J$ (torsion of a minimal complex connection $\nabla$).
 \end{itemize}
We remark that on a complex background $(M,J)$ ($\kappa_{\rm{III}} = 0$):
 \begin{itemize}
 \item Existence of a holomorphic connection in $[\nabla]$ is equivalent to $\kappa_{\rm{II}} = 0$ (see \cite[Prop.\ 3.1.17]{CS}).
 \item $\kappa_{\rm{I}} = 0$ is a necessary condition for $(M,J,[\nabla])$ to be (pseudo-) K\"ahler
 metrizable (the curvature is of type (1,1)). 
 \end{itemize}

We now summarize an abstract description of
$\mathbb{V}_{{\rm I}}, \mathbb{V}_{{\rm II}}, \mathbb{V}_{{\rm III}}$ that will be used in the sequel.
The Satake diagram encoding the real Lie algebra $\g = \mathfrak{sl}(n+1,\C)_\R$ has $n$ nodes in the top and bottom rows:
 \[
\SLCR[].
 \]
Dynkin diagram of the complexification
$\g_\C \cong \mathfrak{sl}(n+1,\C) \times \mathfrak{sl}(n+1,\C)$ is obtained by removing
all arrows from the above Satake diagram.
As $\g$-modules, $\g_\C \cong \g \oplus \bar\g$, where $\g$ and $\bar\g$ correspond
to the $\pm i$-eigenspaces for the natural $\g$-invariant complex structure on $\g$.
Pictorially, $\g$ and $\bar\g$ correspond respectively to the top and bottom rows of the Satake diagram,
and conjugation swaps these factors by reflection in the indicated arrows.
The original real Lie algebra $\g$ is naturally identified with the fixed point set under conjugation,
i.e.\ $\g \cong \{ x + \overline{x} : x \in \g \}$.

The choice of parabolic $\mathfrak{p}\subset\mathfrak{sl}(n+1,\C)_\R$ is encoded by marking the Satake diagram with crosses:
 $$
\CPgen[].
 $$
 The Satake diagram of the semisimple part of $\g_0$ is obtained by removing the crossed nodes:
$(\g_0)_\text{ss}\simeq\mathfrak{sl}(n,\C)_\R$. The parabolic $\mathfrak{p}_\C \subset \g_\C$
induces a grading of $\g_\C$ and we have
$\mathbb{V}_\C = H^2(\g_-,\g) \otimes \C \cong H^2((\g_\C)_-, \g_\C)$.
Using Kostant's version of the Bott--Borel--Weil theorem \cite{BE,CS}, the computation of $(\g_\C)_0$-module structure of $H^2((\g_\C)_-, \g_\C)$ is algorithmically straightforward.  Namely, each $(\g_\C)_0$-irrep, denoted $\mathbb{W}_\mu$, occurs with multiplicity one and its {\em lowest} weight is $\mu=-w\cdot\nu$, where\footnote{When working with the complexification $\g_\C$, we use barred quantities in association with the second (bottom) $\mathfrak{sl}(n+1,\C)$ factor.}
 \begin{itemize}
 \item we use the affine action $\cdot$ of the Weyl group of $\g_\C$ on $\g_\C$-weights.
 \item $w = (jk)$ is a length two element of the Hasse diagram \cite{BE,CS} of $(\g_\C,\mathfrak{p}_\C)$. Here: $w=(12)$, $(1\bar{1})$, or $(\bar{1}\bar{2})$.
 \item $\nu$ is the highest (minus lowest) weight of (the adjoint representation of) a simple ideal in $\g_\C$.  Here: The highest weight of $\mathfrak{sl}(n+1,\C)$ is $\lambda = \lambda_1 + \lambda_n$, expressed in terms of the fundamental weights $\{ \lambda_i \}$, and we have $\nu = \lambda$ or $\nu = \bar\lambda$.
  \end{itemize}
 We encode $\mu$ as follows: express $-\mu$ in terms of the fundamental weights of $\g_\C$ and mark a given node of the Dynkin diagram of $\g_\C$ with its corresponding coefficient \cite{BE}.  Here, $\mathbb{V}_\C$ decomposes into six $(\g_\C)_0$-irreps occurring in three conjugate pairs, and this accounts for the three $\g_0$-irreps in $\mathbb{V}$.  For the real case, we take the same marked Dynkin diagrams but now include the arrows so as to obtain a marked Satake diagram.  Conjugate copies are indicated by the symbol $\operatorname{Cc}$.

\begin{table}[h]
 $$
\hskip-7pt\begin{array}{c|c|c|c} \hline
 \mbox{\small{Type}} & n > 3 & n =3 & n =2 \\ \hline
 \mbox{I} & \CPgen{-4,1,1,0,0,1}{0,0,0,0,0,0} \oplus \mbox{Cc}  & \CPthree{-4,1,2}{0,0,0}\oplus \mbox{Cc} & \CPtwo{-5,1}{0,0}\oplus \mbox{Cc} \\
 \mbox{II} & \CPgen{-3,2,0,0,0,1}{-2,1,0,0,0,0} \oplus \mbox{Cc} & \CPthree{-3,2,1}{-2,1,0}\oplus \mbox{Cc} & \CPtwo{-3,3}{-2,1}\oplus \mbox{Cc} \\
 \mbox{III} & \CPgen{1,0,0,0,0,1}{-3,0,1,0,0,0} \oplus \mbox{Cc} & \CPthree{1,0,1}{-3,0,1}\oplus \mbox{Cc} & \CPtwo{1,1}{-3,0}\oplus \mbox{Cc} \\ \hline
\end{array}
 $$
 \caption{Irreducible harmonic curvature components}
 \label{F:Kh-comp}
\end{table}

Each $\g_0$-irrep, denoted as $\mathbb{V}_\mu$, complexifies to $(\mathbb{V}_\mu)_\C \cong \mathbb{W}_\mu \oplus \overline{\mathbb{W}_\mu}$ as $(\g_\C)_0$-irrep (and $\g_0$-irrep).  Here, $\mathbb{V}_\mu$ is identified with its fixed point set under conjugation, i.e.\ $\mathbb{V}_\mu \cong \{ \phi + \overline{\phi} : \phi \in \mathbb{W}_\mu \}$.  Kostant's theorem explicitly describes a lowest weight vector $\phi_0$ in each $(\g_\C)_0$-irrep $\mathbb{W}_\mu$.  Without loss of generality, $\mu = -w \cdot \lambda$ with $w = (jk)$.  Then
 \[
 \phi_0=e_{\alpha_j}\we e_{\z_j(\alpha_k)}\ot v,
 \]
 in terms of root vectors $e_\beta$, simple roots $\{ \alpha_j \}$, the simple reflection $\sigma_j$, and $v \in \fg_\C$ a weight vector having weight $-w(\lambda)$.

 \begin{table}[h]
  \[
\begin{array}{c|c|c}
 \mbox{{\small Type}} & w & \phi_0 \\ \hline
\mbox{I} & (12) & \left\{ \begin{array}{ll} e_{\alpha_1} \wedge e_{\alpha_1+\alpha_2} \otimes e_{-\alpha_2-\dots-\alpha_n}, & n > 2; \\ e_{\alpha_1} \wedge e_{\alpha_1+\alpha_2} \otimes e_{\alpha_1}, & n = 2 \end{array} \right. \\
\mbox{II} & (1\bar{1}) &  e_{\alpha_1} \wedge e_{\overline{\alpha}_1} \ot  e_{-\alpha_2-\dots-\alpha_n} \\
\mbox{III} & (\bar{1}\bar{2}) & e_{\overline{\alpha}_1} \we e_{\overline{\alpha}_1+\overline{\alpha}_2} \ot e_{-\alpha_1-\dots-\alpha_n} \\ \hline
\end{array}
 \]
 \caption{Lowest weight vectors for harmonic curvature modules}
 \label{F:Kh-lw}
 \end{table}

\section{Upper bound on the submaximal symmetry dimension}\label{S2}

A universal upper bound $\mathfrak{U}$ on the submaximal symmetry dimension $\mathfrak{S}$ for
regular normal parabolic geometries of type $(G,P)$ was proved in \cite{KT}.  In terms of $\mathbb{V} = H^2_+(\g_-,\g)$, we have
 $$
 \mathfrak{U} := \max\{\dim(\fa^\psi) :\,0\neq\psi\in \mathbb{V}\},
 $$
where $\fa^\psi$ is the {\em Tanaka prolongation}
of the pair $(\g_{-},\fa_0=\mathfrak{ann}_{\g_0}(\psi))$ in $\g$.
Namely, $\fa^\psi = \g_- \oplus \fa_0 \oplus \fa_+$ is the graded Lie subalgebra of $\g$ with
 \begin{equation}\label{E:a-phi}
\fa_k = \{ X \in \g_k : \operatorname{ad}^k_{\g_{-1}}(X) \,\cdot \,\psi = 0 \},\ k\ge1.
 \end{equation}

To calculate $\mathfrak{U}$ it suffices to decompose $\mathbb{V}$ into $\g_0$-irreps, calculate the
corresponding maximum for each submodule, and then take the maximum of these.  The calculation becomes particularly easy for those $(G,P)$
that are {\em prolongation-rigid\/} (as defined in \cite{KT}):
for any $0 \neq \psi \in \mathbb{V} = H^2_+(\g_-,\g)$, we have $\fa^\psi_+ = 0$, so that
$ \fa^\psi = \g_- \oplus \mathfrak{ann}_{\g_0}(\psi)$.

In \cite{KT}, the {\em complex\/} case was thoroughly investigated.  In particular, if $(\g,\fp)$ are complex Lie algebras and $\mathbb{V}_\mu \subset \mathbb{V}$ is a $\g_0$-irrep with lowest weight vector $\phi_0$ (and lowest weight $\mu$), then it was proved in \cite{KT} that
 \begin{enumerate}
 \item[(i)] $\fU_\mu = \max\{\dim(\fa^\psi) :\,0\neq\psi\in \mathbb{V}_\mu \}$ is realized by $\dim(\fa^{\phi_0})$;
 \item[(ii)] $\fa^{\phi_0}_+ = 0$ if and only if all integers above crossed nodes for $\mu$ are {\it nonzero\/}.
In this case, $\fU_\mu = \dim(\g_-) + \dim(\mathfrak{ann}(\phi_0))$.
 \end{enumerate}
 If (ii) is satisfied for each $\mathbb{V}_\mu \subset \mathbb{V}$, then $(G,P)$ is prolongation-rigid.

We now consider the case of general {\em real\/} Lie groups underlying given complex Lie groups $(G,P)$,
and refer to the marked Satake diagram notation as before (see \cite{CS}).
The {\em complexification} of any given real $G_0$-irrep
$\mathbb{V}_\mu\subset\mathbb{V} = H^2_+(\g_-,\g)$ is either:
 \begin{enumerate}
 \item[(i)] $\mathbb{W}_\mu \cong \overline{\mathbb{W}_\mu}$, or
 \item[(ii)] $\mathbb{W}_\mu \oplus \overline{\mathbb{W}_\mu}$ (if $\mathbb{W}_\mu \not\cong \overline{\mathbb{W}_\mu}$)
 \end{enumerate}
for some $\g_\C$-weight $\mu$.
In either case, we will (abuse notation and) refer to the given (real) $G_0$-irrep as $\mathbb{V}_\mu$.
Note that (i) occurs if and only $\mu$ is self-conjugate.  For c-projective structures, only (ii) occurs.
Defining $\fU_\mu = \max\{\dim(\fa^\psi) :\,0\neq\psi\in \mathbb{V}_\mu \}$,
where now $\fa^\psi$ is a {\em real} Lie algebra, we respectively have:
 \begin{enumerate}
 \item[(i)] $\fU_\mu = \max\{\dim(\fa^{\phi}) :\,0\neq\phi\in \mathbb{W}_\mu\}$;
 \item[(ii)] $\fU_\mu = \max\{\dim(\fa^{\phi + \bar\phi}) :\,0\neq\phi\in \mathbb{W}_\mu\}$.
 \end{enumerate}

 The following general result is based on \cite[Prop.\ 3.1.1]{KT}.

 \begin{prop}\label{P:lw-vec} 
Let $G$ be a complex semisimple Lie group, and let $P$ be a parabolic subgroup with reductive part $G_0$.
Let $\mathbb{W}$ be a (complex) $G_0$-irrep with $\phi_0 \in \mathbb{W}$ an extremal weight vector.  Regarding $G$ and $P$ as real Lie groups,
we have for $k \geq 0$ and any $0 \neq \phi \in \mathbb{W}$:
 \begin{enumerate}
 \item[(i)] if $\mathbb{W} \cong \overline{\mathbb{W}}$: $\dim(\fa_k^{\phi}) \leq \dim(\fa_k^{\phi_0})$;
 \item[(ii)] if $\mathbb{W} \not\cong \overline{\mathbb{W}}$: $\dim(\fa_k^{\phi + \bar\phi}) \leq \dim(\fa_k^{\phi_0 + \overline{\phi_0}})$.
 \end{enumerate}
 \end{prop}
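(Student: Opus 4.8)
The plan is to adapt the complex argument of \cite[Prop.\ 3.1.1]{KT} to the real setting, exploiting crucially that $G$ is a \emph{complex} group: regarding $G_0$ as a real Lie group changes neither its underlying set nor its action on $\mathbb{W}$, so the $G_0$-orbits (and their Euclidean closures) are literally the same as in the complex case. I would first record two elementary facts about the assignment $\psi\mapsto\fa_k^\psi$, valid for each $k\ge0$. First, $\fa_k^\psi$ is the kernel of the real-linear map $\Theta_\psi$ on $\g_k$ given by $\Theta_\psi(X)\colon (Y_1,\dots,Y_k)\mapsto(\operatorname{ad}_{Y_1}\cdots\operatorname{ad}_{Y_k}X)\cdot\psi$; this depends linearly on $\psi$, its real rank is lower semicontinuous, and hence $\psi\mapsto\dim_\R\fa_k^\psi$ is upper semicontinuous. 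Second, since $G_0$ preserves the grading, $\fa_k^{g\cdot\psi}=\operatorname{Ad}_g(\fa_k^\psi)$ for $g\in G_0$, so $\dim_\R\fa_k^\psi$ is constant along $G_0$-orbits; moreover $\fa_k^{c\psi}=\fa_k^\psi$ for $c\in\C$ nonzero, as the defining condition is homogeneous in $\psi$.

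For case (i) I would argue exactly as over $\C$. Choose a cocharacter $\gamma\colon\C^\times\to G_0$ adapted to the extremal weight $\mu_0$ of $\phi_0$, so that $\langle\mu_0,\gamma\rangle$ is strictly minimal among the weights of $\mathbb{W}$. Since $\mathbb{W}$ is $G_0$-irreducible, the orbit of any $0\neq\phi$ spans $\mathbb{W}$ and therefore meets the open set where the $\mu_0$-component is nonzero; at such a point $g\phi$, letting $t\to0^+$ along the reals and rescaling, $\gamma(t)\cdot(g\phi)$ converges to a nonzero multiple $c\phi_0$ of $\phi_0$. Thus $c\phi_0\in\overline{G_0\cdot\phi}$, and upper semicontinuity combined with orbit-invariance and homogeneity gives $\dim_\R\fa_k^{\phi_0}=\dim_\R\fa_k^{c\phi_0}\ge\dim_\R\fa_k^\phi$.

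For case (ii) the cleanest route is to complexify and invoke the already-established complex statement of \cite{KT}. Write $(\g_\R)_\C=\g\oplus\bar\g$ as a sum of two commuting simple ideals interchanged by the conjugation $\sigma$, with $\g$ acting on $\mathbb{W}$ and $\bar\g$ on $\overline{\mathbb{W}}$, each acting trivially on the other factor. Complexifying the real-linear condition defining $\fa_k^{\phi+\bar\phi}$, the brackets and the module action decouple across the two ideals, yielding
 \[
 \bigl(\fa_k^{\phi+\bar\phi}\bigr)_\C=\fa_k^{\phi}(\g)\oplus\fa_k^{\bar\phi}(\bar\g),
 \]
where $\fa_k^{\phi}(\g)$ is the \emph{complex} annihilator for the holomorphic pair. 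Since $\sigma$ identifies the two summands, $\dim_\R\fa_k^{\phi+\bar\phi}=\dim_\C(\fa_k^{\phi+\bar\phi})_\C=2\dim_\C\fa_k^{\phi}(\g)$. The bound $\dim_\C\fa_k^{\phi}(\g)\le\dim_\C\fa_k^{\phi_0}(\g)$ of \cite[Prop.\ 3.1.1]{KT} then doubles to $\dim_\R\fa_k^{\phi+\bar\phi}\le\dim_\R\fa_k^{\phi_0+\overline{\phi_0}}$.

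The main obstacle is the bookkeeping in case (ii): one must verify that the real annihilator genuinely complexifies to the displayed direct sum, i.e.\ that after $k$ brackets the $\g$- and $\bar\g$-components act only on $\phi$ and on $\bar\phi$ respectively, with no surviving cross-terms. This is precisely where the hypothesis $\mathbb{W}\not\cong\overline{\mathbb{W}}$ is used decisively, forcing $\mathbb{W}\cap\overline{\mathbb{W}}=0$ in $\mathbb{V}_\C$ so that the two vanishing conditions are independent; it is also the step at which the passage from $\dim_\R$ to $\dim_\C$ must be seen to contribute exactly the factor of two that appears symmetrically on both sides of the desired inequality.
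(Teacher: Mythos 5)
Your case (i) is correct and is in the same spirit as the paper's argument: upper semicontinuity of the kernel dimension, invariance along $G_0$-orbits and under rescaling, then degeneration to the extremal weight line. You implement the degeneration by an explicit cocharacter limit, where the paper instead invokes the fact that $G_0\cdot[\phi_0]$ is the unique closed orbit in $\mathbb{P}(\mathbb{W})$, hence lies in every orbit closure; either works.

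Case (ii), however, has a genuine gap, and it sits exactly at the step you flagged as ``the main obstacle''. The premise that after complexification ``$\g$ acts on $\mathbb{W}$ and $\bar\g$ on $\overline{\mathbb{W}}$, each acting trivially on the other factor'' is false in general: a complex $G_0$-irrep $\mathbb{W}$ in this setting is an irrep of $(\g_\C)_0\cong\g_0\oplus\bar\g_0$, i.e.\ an external tensor product on which \emph{both} ideals may act nontrivially. This is precisely what happens for the c-projective modules of types II and III -- the cases where (ii) is actually needed: their lowest weights in Table \ref{F:Kh-comp} carry nonzero labels on both the unbarred and barred rows, and the lowest weight vectors in Table \ref{F:Kh-lw}, e.g.\ $e_{\alpha_1}\wedge e_{\overline{\alpha}_1}\otimes e_{-\alpha_2-\dots-\alpha_n}$ for type II, have arguments in both $\g_-$ and $\bar\g_-$. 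Carrying out the complexification honestly, an element $(u,v)\in\g_k\oplus\bar\g_k$ lies in $(\fa_k^{\phi+\bar\phi})_\C$ iff the iterated $\operatorname{ad}$'s of $u$ annihilate \emph{both} $\phi$ and $\bar\phi$ (acting through the $\g_0$-factor on $\mathbb{W}$ and on $\overline{\mathbb{W}}$), and similarly for $v$; the conditions do not decouple into $\fa_k^{\phi}(\g)\oplus\fa_k^{\bar\phi}(\bar\g)$. (The hypothesis $\mathbb{W}\not\cong\overline{\mathbb{W}}$ only guarantees that the complexification of the real irrep is $\mathbb{W}\oplus\overline{\mathbb{W}}$; it does not kill the cross-actions.) Your claimed summand $\fa_k^{\phi}(\g)$ therefore only \emph{contains} the true one, and the chain of inequalities breaks: you obtain $\dim_\R\fa_k^{\phi+\bar\phi}\le 2\dim_\C\fa_k^{\phi}(\g)\le 2\dim_\C\fa_k^{\phi_0}(\g)$, but $2\dim_\C\fa_k^{\phi_0}(\g)$ is an over-count of $\dim_\R\fa_k^{\phi_0+\overline{\phi_0}}$, so no comparison with the right-hand side of (ii) follows. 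Moreover \cite[Prop.\ 3.1.1]{KT} concerns a single element of a single irrep and says nothing about annihilators of the constrained pairs $(\phi,\bar\phi)$ sitting on the anti-diagonal of $\mathbb{W}\oplus\overline{\mathbb{W}}$.

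The repair is to abandon the reduction to the complex statement and argue on $\mathbb{W}$ directly, which is what the paper does: the function $\mathcal{F}(\phi)=\dim_\R(\fa_k^{\phi+\bar\phi})$ is upper semicontinuous (the defining matrix $M(\psi)$ is real-linear in $\psi=\phi+\bar\phi$, hence in $\phi$), constant on $G_0$-orbits, invariant under real rescaling, and invariant under complex rescaling because the grading element in $\mathfrak{z}(\g_0)$ makes $G_0$ act on $\mathbb{W}$ by arbitrary scalars in $\C^\times$; hence $\mathcal{F}$ descends to $\mathbb{P}(\mathbb{W})$ and is maximized on the unique closed orbit $G_0\cdot[\phi_0]$. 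Equivalently, your own cocharacter argument from case (i) applies verbatim to this $\mathcal{F}$, since the limit $t^{-\langle\mu_0,\gamma\rangle}\gamma(t)\cdot(g\phi)\to c\phi_0$ uses only $G_0$-invariance, positive real rescaling, and semicontinuity; that would be the minimal fix of your write-up.
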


 \begin{proof}  We prove (ii).
 Fix $k \geq 0$, and let $\psi = \phi + \bar\phi$.  From \eqref{E:a-phi}, $\fa_k^\psi = \ker(M(\psi))$, where $M(\psi)$ is some real matrix that depends $\R$-linearly on $\psi$.  The rank of a matrix is a lower semi-continuous function of its entries, so the function $\mathcal{F} : \mathbb{W} \to \mathbb{Z}$ given by
 $\mathcal{F}(\phi) = \dim_\R(\fa_k^{\phi + \bar\phi})$ is upper semi-continuous.  Clearly, $\mathcal{F}(c \phi) = \mathcal{F}(\phi)$ for any $c \in \R^\times$.  Note that  $\mathcal{F}$ is constant on $G_0$-orbits, and since $\mathfrak{z}(\g_0)$ contains a grading element, then $G_0$ contains elements that act on $\mathbb{W}$ by arbitrary $c \in \C^\times$.  Thus, $\mathcal{F}$ descends to the {\em complex} projectivization $\mathbb{P}(\mathbb{W})$.

 It is well-known that $\mathbb{P}(\mathbb{W})$ contains a {\em unique} closed $G_0$-orbit, namely $\mathcal{O} = G_0 \cdot [\phi_0]$.  Thus, $\mathcal{O}$ is in the closure of {\em every} $G_0$-orbit in $\mathbb{P}(\mathbb{W})$.  Hence, since $\mathcal{F} : \mathbb{P}(\mathbb{W}) \to \mathbb{Z}$ is upper semi-continuous and constant on $G_0$-orbits, then (ii) follows immediately.  Proving (i) is similar.
 \end{proof}

 \begin{prop}\label{P:PR}
Let $G$ be a complex semisimple Lie group, and let $P$ be a parabolic subgroup with reductive part $G_0$.  Regard $G$ and $P$ as real Lie groups.  Then $(G,P)$ is prolongation-rigid if 
for every $G_0$-irrep $\mathbb{V}_\mu \subset \mathbb{V} = H^2_+(\g_-,\g)$
the integers over every pair of crossed nodes on the Satake diagram of $\mu$
joined by an arrow are not both zero.
 \end{prop}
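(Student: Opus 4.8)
The plan is to reduce prolongation-rigidity to the vanishing of the Tanaka prolongation at the extremal weight vector of each harmonic curvature module, and then to read off that vanishing from the product structure $\g_\C\cong\g\oplus\bar\g$ of two commuting simple ideals.

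I would first reduce to a single $G_0$-irrep. Decomposing $\psi=\sum_\mu\psi_\mu$ along $\mathbb{V}=\bigoplus_\mu\mathbb{V}_\mu$ and using that each iterated bracket $\operatorname{ad}^k_{\g_{-1}}(X)\in\g_0$ preserves the $\g_0$-submodules $\mathbb{V}_\mu$, one gets $\fa_k^\psi=\bigcap_\mu\fa_k^{\psi_\mu}$, hence $\fa_+^\psi\subseteq\fa_+^{\psi_\mu}$ for any nonzero component. So it is enough to show $\fa_+^\phi=0$ for every nonzero $\phi$ in every $\mathbb{V}_\mu$. By Proposition~\ref{P:lw-vec} the quantities $\dim\fa_k^\phi$ (case (i)) and $\dim\fa_k^{\phi+\bar\phi}$ (case (ii)) are maximized at the extremal weight vector $\phi_0$, so the claim collapses to proving $\fa_+^{\phi_0}=0$ in the self-conjugate case and $\fa_+^{\phi_0+\bar\phi_0}=0$ in the case (ii) relevant to c-projective structures.

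The core step is to compute $\fa_k^{\phi_0+\bar\phi_0}$ after complexification. Since $\g_\C\cong\g\oplus\bar\g$ is a direct sum of two commuting ideals, a real $X\in\g_k$ complexifies as $X=X_0+\overline{X_0}$ with $X_0\in\g_k$, and any bracket $\operatorname{ad}(Y_k)\cdots\operatorname{ad}(Y_1)(X)$ with $Y_i\in(\g_\C)_{-1}=\g_{-1}\oplus\bar\g_{-1}$ vanishes unless all $Y_i$ lie in a single factor. Writing $\phi_0=a_0\ot b_0\in A\ot B$, where $A$ is the $\g_0$-irrep and $B$ the $\bar\g_0$-irrep into which $\mathbb{W}_\mu$ factors (one of which may be trivial), the two surviving pure-factor contributions act on separate tensor slots; testing against all-first-factor and all-second-factor brackets then forces $X_0\in\fa_k^{a_0}$ (prolongation in the first $\mathfrak{sl}(n+1,\C)$) and $\overline{X_0}\in\fa_k^{b_0}$ (in the second). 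Thus $\fa_k^{\phi_0+\bar\phi_0}\cong\fa_k^{a_0}\cap\overline{\fa_k^{b_0}}$ inside $\g_k$.

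It then remains to apply the complex criterion of \cite{KT} recalled above (items (i)--(ii)) to each simple factor: $\fa_+^{a_0}=0$ precisely when the integer over the crossed node of $A$ --- the top node of the arrow-pair on the marked Satake diagram of $\mu$ --- is nonzero, and likewise $\fa_+^{b_0}=0$ when the bottom node's integer is nonzero. Because the intersection $\fa_k^{a_0}\cap\overline{\fa_k^{b_0}}$ collapses to $0$ as soon as one of the two factor-prolongations vanishes, the hypothesis that the two crossed-node integers joined by the arrow are not both zero is exactly what yields $\fa_+^{\phi_0+\bar\phi_0}=0$; the self-conjugate case (i) is identical except that the arrow-pair carries equal integers, so ``not both zero'' again forces the common integer to be nonzero. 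I expect the main obstacle to be this core step: checking that mixed-factor brackets really do vanish and that the $(\g_0\times\bar\g_0)$-action on $\phi_0+\bar\phi_0$ decouples slot-by-slot, so that the reality constraint $X=X_0+\overline{X_0}$ converts the complex ``both nonzero'' requirement into the weaker real ``not both zero'' one.
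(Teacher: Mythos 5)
Your skeleton matches the paper's: reduce to a single $G_0$-irrep, use Proposition \ref{P:lw-vec} to pass to the extremal weight vector, then show the prolongation vanishes there. The divergence is in that last step. The paper complexifies the whole problem and applies \cite[eqn (3.2) and Thm.\ 3.3.3]{KT} to the pair $(\g_\C,\fp_\C)$ itself: there $\mathbb{W}_\mu$ and $\overline{\mathbb{W}_\mu}$ are honest components of $H^2((\g_\C)_-,\g_\C)$, the Dynkin diagram of $\g_\C$ has one crossed node per simple factor, for $k\ge1$ one has $\fa_k^{\phi_0+\overline{\phi_0}}=\fa_k^{\phi_0}\cap\fa_k^{\overline{\phi_0}}$ (components in non-isomorphic irreps vanish separately), each term is a sum of root spaces over the zero-marked crossed nodes, and ``not both zero'' kills the intersection; real prolongation-rigidity then follows because the real prolongation embeds in the complex one. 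You instead stay with the real prolongation and compute it factor-wise through $\mathbb{W}_\mu\cong A\ot B$, arriving at $\fa_k^{\phi_0+\bar\phi_0}\cong\fa_k^{a_0}\cap\overline{\fa_k^{b_0}}$. That identity is in fact true, and your conclusion from it is correct, so the route is a legitimate variant --- but it is exactly the step the paper's formulation is designed to sidestep, and as written it has two holes.

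First, the decoupling. You cannot literally ``test against all-first-factor brackets'': every element of the real $\g_{-1}$ has the form $Y_0+\overline{Y_0}$, and for a single real tuple the condition $(Z_0+\overline{Z_0})\cdot\phi_0=0$ only yields the coupled relation $Z_0\cdot a_0=\lambda a_0$, $\overline{Z_0}\cdot b_0=-\lambda b_0$ --- precisely the trace-coupling between the two factors visible at $k=0$ in Table \ref{F:ann}. What rescues your claim for $k\ge1$ is that the defining expression is $\R$-multilinear in the $Y$'s, hence its $\C$-multilinear extension vanishes on all of $(\g_\C)_{-1}$, and only after this extension may you specialize to pure-factor tuples (equivalently, substituting $Y\mapsto iY$ separates the holomorphic and antiholomorphic contributions and forces $\lambda=0$). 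You flagged this as the expected obstacle; it genuinely has to be carried out, since it is what converts the coupled annihilator conditions at $k=0$ into decoupled ones at $k\ge1$. Second, and more seriously: the criterion you invoke --- items (i)--(ii) recalled in Section \ref{S2} --- is stated for irreps $\mathbb{V}_\mu\subset H^2_+(\g_-,\g)$, and for types II and III neither tensor factor is a harmonic-curvature module of a single $\mathfrak{sl}(n+1,\C)$: for type II, $B\cong\bar\g_1$ and $A$ sits inside $\g_1\ot\g$ (only the type I factor $A$, the holomorphic projective Weyl module, is covered literally). So applying ``$\fa_+^{a_0}=0$ iff the crossed mark is nonzero'' to $a_0$ and $b_0$ requires the root-theoretic form of \cite[Thm.\ 3.3.3]{KT} valid for extremal weight vectors of arbitrary $\g_0$-irreps, not the version recalled here. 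That statement is true, but it must be invoked in its proper generality; the paper's proof avoids the issue entirely by citing \cite{KT} only where its hypotheses hold verbatim.
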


 \begin{proof} It suffices to prove the result for a single $\mathbb{V}_\mu$.  We have the $\g_0$-reps $(\mathbb{V}_\mu)_\C \cong \mathbb{W}_\mu$ or $(\mathbb{V}_\mu)_\C \cong \mathbb{W}_\mu \oplus \overline{\mathbb{W}_\mu}$.  Regard these as $(\g_\C)_0$-reps.  Repeating the proof of Proposition \ref{P:lw-vec}, but now in the complex case, we find that the complex Lie algebra $\fa^\psi_k$ for each $k \geq 0$ has maximum dimension among $0 \neq \psi \in \mathbb{V}_\mu$ when $\psi = \phi_0$ or $\psi = \phi_0 + \overline{\phi_0}$ respectively.  If the Satake (hence the Dynkin) diagram for $\mu$ satisfies the given condition, then by \cite[eqn (3.2) and Thm.\ 3.3.3]{KT}, we have $\fa^{\phi_0}_+ = 0$ or  $\fa^{\phi_0 + \overline{\phi_0}}_+ = 0$
respectively.  This being true for each $\mathbb{V}_\mu$ forces prolongation-rigidity of $(\g_\C,\mathfrak{p}_\C)$, and hence prolongation-rigidity of $(\g,\mathfrak{p})$.
 \end{proof}

From Table \ref{F:Kh-comp}, we immediately see that the criteria of Proposition \ref{P:PR}
are satisfied for (minimal) c-projective structures.

 \begin{cor}\label{cor2}
C-projective geometry is prolongation-rigid. \qed
 \end{cor}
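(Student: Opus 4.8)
The plan is to apply Proposition \ref{P:PR} and then carry out the finite combinatorial check that it prescribes. Recall that $(G,P)$ is prolongation-rigid precisely when $\fa^\psi_+=0$ for every nonzero $\psi\in\mathbb{V}=H^2_+(\g_-,\g)$, and that Proposition \ref{P:PR} guarantees this as soon as, for each $\g_0$-irrep $\mathbb{V}_\mu\subset\mathbb{V}$, no arrow-joined pair of crossed nodes in the marked Satake diagram of $\mu$ carries the coefficient $0$ over both of its nodes. Since Table \ref{F:Kh-comp} already records these marked Satake diagrams for the three harmonic curvature types across all ranges of $n$, the entire corollary reduces to inspecting those diagrams.

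First I would fix the combinatorial picture. As explained in Section \ref{S1}, the Satake diagram of $\g=\mathfrak{sl}(n+1,\C)_\R$ consists of a top and a bottom row of $n$ nodes each---corresponding to the two simple factors of $\g_\C$ interchanged by conjugation---joined by vertical arrows, and for the c-projective parabolic the crossing sits on the first node of each row. Hence there is exactly one arrow-joined pair of crossed nodes, namely the two leading nodes, and the hypothesis of Proposition \ref{P:PR} amounts to the single requirement that the pair of leading coefficients (top and bottom) in each entry of Table \ref{F:Kh-comp} is not $(0,0)$.

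Next I would read off those leading coefficients for each type and each range of $n$. For Type I they are $(-4,0)$ when $n\ge 3$ and $(-5,0)$ when $n=2$; for Type II they are $(-3,-2)$ for all $n\ge 2$; and for Type III they are $(1,-3)$ for all $n\ge 2$. In no case do both coefficients vanish, so the criterion of Proposition \ref{P:PR} is met by every $\mathbb{V}_\mu$ and for every $n\ge 2$, whence prolongation-rigidity follows.

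I expect no genuine obstacle here, since the substance lies entirely in Proposition \ref{P:PR}; what remains is a finite verification. The one point deserving care is conceptual rather than computational: one must recognize that the two crossed nodes constitute a single arrow-joined pair, rather than treating the top and bottom rows as independent crossings, so that Proposition \ref{P:PR} is applied to the correct pair of coefficients. Once this is settled, Table \ref{F:Kh-comp} supplies all the needed data and the corollary is immediate.
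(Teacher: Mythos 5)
Your proposal is correct and follows exactly the paper's own route: the paper proves the corollary by observing that Table \ref{F:Kh-comp} satisfies the criterion of Proposition \ref{P:PR}, which is precisely the check you carry out. You merely make explicit the coefficient pairs $(-4,0)$, $(-5,0)$, $(-3,-2)$, $(1,-3)$ over the unique arrow-joined pair of crossed nodes, a verification the paper leaves to the reader with the words ``we immediately see.''
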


We will use the notations $\fS_i$ and $\fU_i$ referring to a specific curvature type.
Thus, for each c-projective type, since $2n = \dim(\g_-)$, we have
 \begin{align} \label{E:SA}
 \mathfrak{S}_\mu \leq \mathfrak{U}_\mu = \dim(\fa^{\phi_0 + \overline{\phi_0}}) = 2n + \dim(\mathfrak{ann}_{\g_0}(\phi_0 + \overline{\phi_0})).
 \end{align}
 Using the data from Tables \ref{F:Kh-comp} and \ref{F:Kh-lw}, the annihilators $\fa_0 = \mathfrak{ann}_{\g_0}(\phi_0 + \overline{\phi_0})$ for all three types are computed in Table \ref{F:ann}.

 \begin{footnotesize}
 \begin{center}
 \begin{table}[h]
 \[
 \begin{array}{|c|c|c|} \hline
 \mbox{Type} & \fa_0 \,\, (n \geq 3) & \fa_0 \,\, (n=2) \\ \hline
 \mbox{I} & \begin{array}{c}
 \left(\begin{array}{c|cccccc}
 a_0 & 0 & 0 & 0 & \cdots & 0 & 0\\ \hline
 0 & a_1 & 0 & 0 & \cdots & 0 & 0\\
 0 & * & a_2 & 0 & \cdots & 0 & 0\\
 0 & * & * & a_3 & \cdots & * & 0\\
 \vdots &  \vdots & \vdots & \vdots & \ddots & \vdots & 0\\
 0 & * & * & * & \cdots & a_{n-1} & 0\\
 0 & * & * & * & \cdots & * & a_n\\
 \end{array} \right) \\
 a_0+\dots+a_n=0,\\
 2(a_0-a_1)-a_2+a_n=0,\\
 \dim_\R(\fa_0) = 2(n^2-3n+5)
 \end{array} &
 \begin{array}{c}
 \left(\begin{array}{c|cc}
 a_0 & 0 & 0 \\ \hline
 0 & a_1 & 0 \\
 0 & * & a_2 \\
 \end{array} \right) \\
 a_0 + a_1 + a_2 = 0,\\
 3a_0 - 2a_1 - a_2 = 0,\\
 \dim_\R(\fa_0) = 4
 \end{array}\\ \hline
 \mbox{II} &
 \begin{array}{c}
 \left(\begin{array}{c|cccccc}
 a_0 & 0 & 0 & 0 & \cdots & 0 & 0\\ \hline
 0 & a_1 & 0 & 0 & \cdots & 0 & 0\\
 0 & * & a_2 & * & \cdots & * & 0\\
 0 & * & * & a_3 & \cdots & * & 0\\
 \vdots &  \vdots & \vdots & \vdots & \ddots & \vdots & 0\\
 0 & * & * & * & \cdots & a_{n-1} & 0\\
 0 & * & * & * & \cdots & * & a_n\\
\end{array} \right)\\
 a_0+\dots+a_n=0,\\
 2\op{Re}(a_0-a_1)=a_1-a_n,\\
 \dim_\R(\fa_0) = 2(n^2-2n+2)
 \end{array} &
 \begin{array}{c}
 \left(\begin{array}{c|cc}
 a_0 & 0 & 0 \\ \hline
 0 & a_1 & 0 \\
 0 & * & a_2 \\
 \end{array} \right) \\
 a_0 + a_1 + a_2 = 0,\\
 2\op{Re}(a_0-a_1) = a_1 - a_2,\\
 \dim_\R(\fa_0) = 4
 \end{array}\\ \hline
 \mbox{III} &
 \begin{array}{c}
 \left(\begin{array}{c|cccccc}
 a_0 & 0 & 0 & 0 & \cdots & 0 & 0\\ \hline
 0 & a_1 & * & 0 & \cdots & 0 & 0\\
 0 & * & a_2 & 0 & \cdots & 0 & 0\\
 0 & * & * & a_3 & \cdots & * & 0\\
 \vdots &  \vdots & \vdots & \vdots & \ddots & \vdots & 0\\
 0 & * & * & * & \cdots & a_{n-1} & 0\\
 0 & * & * & * & \cdots & * & a_n\\
\end{array} \right)\\
 a_0+\dots+a_n=0,\\
 2\overline{a_0}-a_0=\overline{a_1}+\overline{a_2}-a_n\\
 \dim_\R(\fa_0) = 2(n^2-3n+6)
 \end{array} &
 \begin{array}{c}
 \left(\begin{array}{c|cc}
 a_0 & 0 & 0 \\ \hline
 0 & a_1 & 0 \\
 0 & * & a_2 \\
 \end{array} \right) \\
 a_0 + a_1 + a_2 = 0,\\
 2\overline{a_0}-a_0=\overline{a_1}+\overline{a_2}-a_2,\\
 \dim_\R(\fa_0) = 4
 \end{array}\\ \hline
 \end{array}
 \]
 \caption{Annihilators $\fa_0 = \mathfrak{ann}_{\g_0}(\phi_0 + \overline{\phi_0})$ associated to harmonic curvature types}
 \label{F:ann}
 \end{table}
 \end{center}
 \end{footnotesize}

 The following recipe for computing $\mathfrak{ann}_{\g_0}(\phi_0 + \overline{\phi_0})$ is analogous to those discussed in \cite{KT}:
 \begin{enumerate}
 \item Put asterisks over any pair of uncrossed nodes connected by an arrow in the Satake diagram
of $\mu$, if one of the nodes has a nonzero coefficient.
This determines an (opposite\footnote{Standard parabolics in this paper are (block) upper triangular, but since we use {\em lowest} weight vectors, the annihilators in $\g_0$ have (block) lower triangular shape.}) parabolic in $(\g_0)_{ss}$ and hence the general shape of $\mathfrak{ann}_{\g_0}(\phi_0 + \overline{\phi_0})$.
 \item  Diagonal elements $X \in \mathfrak{sl}(n+1,\C)_\R$ satisfy $\mu(X) = 0$, since
  \[
 X \cdot (\phi_0 + \overline{\phi_0}) = \mu(X) \phi_0 + \overline{\mu}(X) \overline{\phi_0}.
 \]
 This condition becomes clear by converting $\mu$ into root notation.
 \end{enumerate}

 \begin{examp}
 Consider the type III case when $n \geq 3$.  Then
 \[
 \CPgen{1,0,0,0,0,1}{-3,0,1,0,0,0} \quad\leadsto\quad
 \begin{tikzpicture}[scale=\myscale,baseline=-3pt]

 \tikzstyle{every node}=[font=\tiny]
 \newcommand\myb{0.3}
 \bond{0,\myb};
 \bond{1,\myb};
 \bond{2,\myb};
 \tdots{3,\myb};
 \bond{4,\myb};

 \draw (0,-\myb) node[above=-1pt] {$\updownarrow$};
 \draw (1,-\myb) node[above=-1pt] {$\updownarrow$};
 \draw (2,-\myb) node[above=-1pt] {$\updownarrow$};
 \draw (3,-\myb) node[above=-1pt] {$\updownarrow$};
 \draw (4,-\myb) node[above=-1pt] {$\updownarrow$};
 \draw (5,-\myb) node[above=-1pt] {$\updownarrow$};

 \bond{0,-\myb};
 \bond{1,-\myb};
 \bond{2,-\myb};
 \tdots{3,-\myb};
 \bond{4,-\myb};

 \DDnode{x}{0,\myb}{};
 \DDnode{w}{1,\myb}{};
 \DDnode{s}{2,\myb}{};
 \DDnode{w}{3,\myb}{};
 \DDnode{w}{4,\myb}{};
 \DDnode{s}{5,\myb}{};

 \DDnode{x}{0,-\myb}{};
 \DDnode{w}{1,-\myb}{};
 \DDnode{s}{2,-\myb}{};
 \DDnode{w}{3,-\myb}{};
 \DDnode{w}{4,-\myb}{};
 \DDnode{s}{5,-\myb}{};

 \draw (0,-\myb) node[below=2pt] {};
 \draw (1,-\myb) node[below=2pt] {};
 \draw (2,-\myb) node[below=2pt] {};
 \draw (3,-\myb) node[below=2pt] {};
 \draw (4,-\myb) node[below=2pt] {};
 \draw (5,-\myb) node[below=2pt] {};

 \end{tikzpicture}
 \]
 determines the shape of the annihilator as listed in Table \ref{F:ann}.  Now express the weight in terms of the simple roots $\alpha_j = \epsilon_j - \epsilon_{j+1}$, where $\epsilon_j$ are the functionals that extract the $j$-th diagonal element of $\mathfrak{sl}(n+1,\C)$:
 \begin{align*}
 -\mu &= \lambda_1 + \lambda_n - 3\overline{\lambda_1} + \overline{\lambda_3} = \alpha_1 + ... + \alpha_n - 2\overline{\alpha_1} - \overline{\alpha_2}\\
 &= \epsilon_1 - \epsilon_{n+1} - 2\overline{\epsilon_1} + \overline{\epsilon_2} + \overline{\epsilon_3}
 \end{align*}
 This determines the remaining condition on the annihilator.
 \end{examp}

Using \eqref{E:SA}, we compute each $\mathfrak{U}_\mu$ and obtain the dimensions listed for $\mathfrak{S}_\mu$ in Theorem \ref{Thm2},
except for type I, $n=2$ for which $\mathfrak{U}_{{\rm I}} = 8$.

In the type I case, the Cartan geometry is equivalent to a complex parabolic geometry of type $(G,P)$ (where these are regarded as complex Lie groups), whose underlying structure is a holomorphic projective structure.  These submaximal symmetry dimensions were classified in \cite{KT} (see \cite{E$_1$} for the real projective case). In terms of the $\C$-dimension $n$ of the underlying complex manifold, we have $\fS_\C = n^2 - 2n + 5$ when $n \geq 3$ and $\fS_\C = 3$ when $n=2$.  Regarded as a c-projective structure of type I, these complex dimensions simply double to get the corresponding real dimensions.
The $n=2$ case is the well-known exception that is the holomorphic analogue of 2-dimensional
projective structures, see \cite[\S4.3]{KT}. This finishes the type I case.

It remains to show that $\fS_{{\rm II}} = \fU_{{\rm II}}$ and $\fS_{{\rm III}} = \fU_{{\rm III}}$.  This is accomplished in Section \ref{S3} by exhibiting type II and type III models whose c-projective symmetries realize the calculated upper bounds.  Alternatively, we now give an abstract proof of realizability via the same technique as used in \cite[\S4.1]{KT}.  There, an abstract model of the regular normal parabolic geometry
was constructed by a deformation idea.
Here, fix a type and consider the (graded) Tanaka algebra $\fa := \fa^{\psi} = \g_{-}\oplus\fa_0^{\psi} \subset\g$, where $\psi = \phi_0 + \overline{\phi_0}$.  Define $\ff = \fa$ as vector spaces, and consider the deformed bracket $[\cdot,\cdot]_{\ff} = [\cdot,\cdot]_\fa - \psi(\cdot,\cdot)$ by regarding $\psi$ as a 2-cochain.  From Table \ref{F:Kh-lw}, we see that with the exception of the $n=2$ type I case, $\psi$ has image in $\g_- \subset \fa$, so $[\cdot,\cdot]_{\ff}$ is well-defined.  As in \cite[Lemma 4.1.1]{KT}, the Jacobi identity on $\ff$ reduces to:
 \begin{align} \label{E:Jac}
\op{Jac}_{\ff}(x,y,z) = \psi(\psi(x,y),z) + \psi(\psi(y,z),x) + \psi(\psi(z,x),y).
 \end{align}
From Table \ref{F:Kh-lw}, for $n \geq 3$, the output of $\psi$ does not depend on any of the root spaces involved in the input to the 2-cochain $\psi$.  Hence, by \eqref{E:Jac}, the Jacobi identity holds for $\ff$.  For $n=2$, this argument works only in the type II case.  In all valid cases, i.e.\ when $\ff$ is a Lie algebra, $\ff/\fa_0$ integrates to a {\em local} homogeneous space $M = F / A_0$.  This space will support a c-projective structure whose symmetry algebra is isomorphic to $\ff$.  This is asserted by an extension functor argument \cite{CS}: Consider the principal $P$-bundle $\mathcal{G} = F \times_{A_0} P \to M$.  An $\ff$-invariant Cartan connection of type $(G,P)$ is determined by the algebraic data of a linear map $\varphi : \ff \to \g$ that is $\fa_0$-equivariant and satisfies $\varphi|_{\fa_0} = \operatorname{id}_{\fa_0}$.  Using the vector space identification $\ff = \fa$, consider the $\ff$-invariant Cartan connection determined by $\varphi = \operatorname{id}_{\ff}$.  Its full curvature corresponds to
 \[
 [\varphi(x),\varphi(y)]_\fa - \varphi([x,y]_\ff) = [x,y]_\fa - [x,y]_\ff = \psi(x,y), \qquad \forall x,y \in \g_-.
 \]
 Hence, the curvature is purely harmonic.  Thus, we have constructed a regular normal Cartan geometry of type $(G,P)$, and its underlying structure is a c-projective geometry of the given type.

 When $n=2$, the above argument fails for:
 \begin{itemize}
 \item type I: the deformation by $\psi$ is not well-defined.  As remarked earlier, $\fS_{{\rm I}} = 6 < \fU_{{\rm I}} = 8$.
 \item type III: the Jacobi identity fails for $\ff$.  However, a different deformation of $\fa_0$ is possible, and a model is given in Section \ref{S3}; see the details in Appendix \ref{S.A}.
Thus, $\fS_{{\rm III}} = \fU_{{\rm III}} = 8$.
 \end{itemize}
This concludes the proof of Theorem \ref{Thm2}.

\section{Submaximal models in the three curvature types}\label{S3}

Let us specify explicit models\footnote{We re-numerate some indices in the matrix models
$\mathfrak{a}_0$ of Section \ref{S2}, e.g.\ $2\leftrightarrow n$ for type II etc. This helps seeing
the stabilization of the models.} realizing the universal bounds for non-flat c-projective structures of
the pure curvature types $\mathbb{V}_i$. We do not claim unicity of these models at this point, they
only prove sharpness of the estimates.

\smallskip

{\bf Type I, ${\mathbf{n>2}}$.} This is the holomorphic version of the Egorov's symmetric connection
\cite{E$_1$}, given in the real case by the Christoffel symbols $\Gamma^1_{23}=x^2$, $\Gamma^i_{jk}=0$ else,
in coordinates $(x^1,\dots,x^n)$ on $\R^n$.

The holomorphic version $\nabla$ is given by
(to get real structures we add complex conjugate to all equations, shortening this to +Cc)
 $$
\Gamma^1_{23}=z^2\ \text{ (+Cc: }\Gamma^{\bar1}_{\bar2\bar3}=\overline{z^2})
 $$
(and $\Gamma^l_{jk}=0$ for all other barred/un-barred indices)
in the coordinate system $(z^1,\dots,z^n)$ on $\C^n$. The complex structure is standard $J=i$.

In the real coordinates $(\tilde x^1,\dots,\tilde x^{2n})$ given by
$z^k=\tilde x^{2k-1}+i \tilde x^{2k}$ we have the following non-zero Christoffel indices:
 $$
\Gamma^{\tilde1}_{\tilde3\tilde5}=\Gamma^{\tilde2}_{\tilde3\tilde6}=\Gamma^{\tilde2}_{\tilde4\tilde5}=
-\Gamma^{\tilde1}_{\tilde4\tilde6}=\tilde x^3, \quad
\Gamma^{\tilde2}_{\tilde3\tilde5}=-\Gamma^{\tilde1}_{\tilde3\tilde6}=-\Gamma^{\tilde1}_{\tilde4\tilde5}=
-\Gamma^{\tilde2}_{\tilde4\tilde6}=\tilde x^4.
 $$
The harmonic curvature has type I and is non-zero: $\kappa_H=\kappa_{{\rm I}}\ne0$. Indeed, the curvature of
$\nabla$ in the complex coordinates is equal to
 $$
W_\nabla=dz^2\we dz^3\ot z^2\p_{z^1}\ \text{ (+Cc)}.
 $$

The c-projective symmetries are found from the equations specified in \S\ref{S1} to be the
real and imaginary parts of the following (linearly independent) holomorphic vector fields:
  \begin{gather*}
\p_{z^1},\quad \p_{z^3},\ \dots,\ \p_{z^n}, \quad z^i\p_{z^j}\quad (i\ge2,\ j\ne2,3),\\
2z^1\p_{z^1}+z^2\p_{z^2},\quad z^1\p_{z^1}+z^3\p_{z^3},\quad z^2z^3\p_{z^1} - \p_{z^2},\quad
(z^2)^3\p_{z^1} - 3z^2\p_{z^3}.
 \end{gather*}
Since the totality of these $2\cdot(n^2-2n+5)$ coincides with the universal upper bound, these
are all symmetries, and so the above $(J,[\nabla])$ is a sub-maximal
c-projective structure of curvature type I.

\smallskip

{\bf Type I, ${\mathbf{n=2}}$.} Real projective structures on $\R^2$ were studied by Lie and Liouville
\cite{Lio}, and Tresse \cite{Tr} classified submaximal projective connections
(in retrospective, as this notions was introduced later by
Cartan \cite{C}; Tresse studied the corresponding 2nd order ODEs).
Complexification yields a submaximal c-projective structure with respect to the
standard complex structure $J=i$ on $\C^2$: $\nabla$ is the complex connection
with the non-zero Christoffel symbols
 $$
\Gamma^1_{22}=-\Gamma^1_{11}=\frac1{2z^1}\ \text{ (+Cc).}
 $$
The c-projective symmetries are real and imaginary parts of the holomorphic fields
(altogether 6 symmetries)
 $$
\p_{z^2},\ z^1\p_{z^1}+z^2\p_{z^2},\ z^1z^2\p_{z^1}+\tfrac12(z^2)^2\p_{z^2}.
 $$

\smallskip

{\bf Type II.} Consider the complex connection $\nabla$ with respect to the standard
complex structure $J=i$ on $\C^n$ given in the complex coordinates $(z^1,\dots,z^n)$
by non-zero Christoffel symbols
 \begin{equation}\label{subCmax}
\Gamma_{11}^2=\overline{z^1}\quad\text{(+Cc: } \Gamma_{\bar1\bar1}^{\bar2}=z^1).
 \end{equation}
Its curvature has pure type II, $\kappa_H=\kappa_{{\rm II}}\ne0$:
 $$
W_\nabla=dz^1\we d\overline{z^1}\otimes z^1\p_{z^2}\ \text{ (+Cc)}.
 $$
The c-projective symmetries are found from the equations specified in \S\ref{S1} to be the
real and imaginary parts of the following (linearly independent) complex-valued vector fields:
  \begin{gather*}
\p_{z^2},\ \dots,\ \p_{z^n}, \quad z^i\p_{z^j}\quad (i\ne2,\ j>1),\\
z^1\p_{z^1}+2z^2\p_{z^2}+\overline{z^2}\p_{\overline{z^2}},\quad
\p_{z^1}-\tfrac12(\overline{z^1})^2\p_{\overline{z^2}}.
 \end{gather*}
Since the totality of these $2(n^2-n+2)$ coincides with the universal upper bound, these
are all symmetries, and so the above $(J,[\nabla])$ is a sub-maximal
c-projective structure of curvature type II.

\smallskip

{\bf Type III, ${\mathbf{n>2}}$.} Sub-maximal symmetric almost complex structures on $\C^3$,
i.e.\ maximally symmetric (measured via functional dimension and rank) among all non-integrable $J$,
were classified in \cite{K$_4$}. There are two different such structures, but only one of them
has the Nijenhuis tensor, given by the lowest weight vector from the Kostant's Borel-Bott-Weil theorem.
Namely, the structure in complex coordinates $(z^1,z^2,z^3)$ is given by
 $$
J\p_{z^1}=i\p_{z^1}+z^2\p_{\overline{z^3}},\ J\p_{z^2}=i\p_{z^2},\ J\p_{z^3}=i\p_{z^3}.
 $$
Let us extend it to $\C^n$ by multiplying with $(\C^{n-3},i)$, i.e.\ letting $J\p_{z^k}=i\p_{z^k}$ for
$k>3$.

In the real coordinates $(\tilde x^1,\dots,\tilde x^6)$ given by
$z^k=\tilde x^{2k-1}+i \tilde x^{2k}$ we have:
 \begin{gather*}
J\p_{\tilde x^1}=\p_{\tilde x^2}+\tilde x^3\p_{\tilde x^5}-\tilde x^4\p_{\tilde x^6},\
J\p_{\tilde x^2}=-\p_{\tilde x^1}-\tilde x^4\p_{\tilde x^5}-\tilde x^3\p_{\tilde x^6},\\
J\p_{\tilde x^{2k-1}}=\p_{\tilde x^{2k}},\ J\p_{\tilde x^{2k}}=-\p_{\tilde x^{2k-1}}\ (k>1).
 \end{gather*}

To find a complex connection for this $J$, let $\tilde\nabla=d$ be the trivial connection,
i.e.\ its Christoffel symbols vanish in the given coordinate system. Then
$\nabla=\frac12(\tilde\nabla-J\tilde\nabla J)$ is a complex connection. Its non-zero
Christoffel symbols are
 $$
\Gamma_{21}^{\bar3}=\frac{i}2\ \text{ (+Cc: } \Gamma_{\bar2\bar1}^3=-\frac{i}2).
 $$
In real coordinates these write so:
 $$
\Gamma_{\tilde3\tilde1}^{\tilde6}=\Gamma_{\tilde3\tilde2}^{\tilde5}=
\Gamma_{\tilde4\tilde1}^{\tilde5}=-\Gamma_{\tilde4\tilde2}^{\tilde6}=-\frac12,
 $$
and, in particular, $\Gamma_{\tilde1\tilde3}^{\tilde6}=\Gamma_{\tilde2\tilde3}^{\tilde5}=
\Gamma_{\tilde1\tilde4}^{\tilde5}=\Gamma_{\tilde2\tilde4}^{\tilde6}=0$.

Thus the torsion $T_\nabla\neq0$, while the curvature $R_\nabla=0$.
In fact,
 $$
N_J=-2i\,dz^1\we dz^2\ot\p_{\overline{z^3}}\ \text{ (+Cc)}.
 $$
Consequently, $\kappa_H=\kappa_{\rm{III}}\neq0$,
the connection $\nabla$ is minimal and the c-projective structure $(J,[\nabla])$ has curvature type III.

The c-projective symmetries are found from the equations specified in \S\ref{S1} to be the
real and imaginary parts of the following (linearly independent) complex-valued vector fields:
  \begin{gather*}
\p_{z^1},\quad \p_{z^3},\ \dots,\ \p_{z^n}, \quad z^i\p_{z^j}\quad (i\ne3,\ j>2),\\
z^1\p_{z^1}+\overline{z^3}\p_{\overline{z^3}},\quad
z^2\p_{z^2}+\overline{z^3}\p_{\overline{z^3}},\quad
\p_{z^2}+\tfrac{z^1}{2i}(\p_{z^3}+\p_{\overline{z^3}}),\\
z^1\p_{z^2}-\tfrac{i}4(z^1)^2\p_{\overline{z^3}},\quad
z^2\p_{z^1}-\tfrac{i}4(z^2)^2\p_{\overline{z^3}}.
 \end{gather*}
Since the totality of these $2\cdot(n^2-2n+6)$ coincides with the universal upper bound, these
are all symmetries, and so the above
$(J,[\nabla])$ is a sub-maximal c-projective structure of curvature type III.

\smallskip

{\bf Type III, ${\mathbf{n=2}}$.}
This is the exceptional case, for which the method of Section \ref{S2} does not give even
an abstract model. However the abstract bound $\dim\le8$ is sharp.
We provide the local model $(M^4,J,[\nabla])$ in real coordinates $(x,y,p,q)$.
The almost complex structure in the basis $e_1=\p_x$, $e_2=\p_y$, $e_3=\p_p$,
$e_4=\p_q-\frac{3y}{2p}\,\p_x-\frac{5x}{2p}\,\p_y$ is given by:
 $$
Je_1=e_2,\ Je_2=-e_1,\ Je_3=e_4,\ Je_4=-e_3.
 $$
In the dual co-basis $\theta_1=dx+\frac{3y}{2p}\,dq$, $\theta_2=dy+\frac{5x}{2p}\,dq$, $\theta_3=dp$, $\theta_4=dq$
the minimal complex connection is given by:
 \begin{alignat*}{2}
\nabla e_1=\frac1{2p}\,e_2\ot\theta_4,\quad &
\nabla e_3=-\frac1{p}\,(e_1\ot\theta_1-e_2\ot\theta_2+e_3\ot\theta_3+e_4\ot\theta_4)\\
 & \ -\frac1{4p^2}\,\bigl(e_1\ot(3x\theta_3+3y\theta_4)+e_2\ot(3y\theta_3+13x\theta_4)\bigr)
 \end{alignat*}
(these relations are enough since $\nabla Je_k=J\nabla e_k$).

The torsion $T_\nabla$ is non-zero and represents $\kappa_{\op{III}}$. The curvature $R_\nabla$ is however also non-zero
and has type $(1,1)$. One could suspect that this yields a harmonic curvature of type II ($\kappa_{\op{I}}=0$ because the (2,0)-part
of $R_\nabla$ vanishes, and also because elsewise the dimension of $\mathfrak{cp}(\nabla,J)$ would be bounded by 6),
but the structure equations show $\kappa_{\op{II}}=0$ (we delegate the details of this computation to the appendix).

Thus the above pair $(J,[\nabla])$ has harmonic curvature of pure type III. And its symmetry algebra has dimension 8, here are
the generators:
 \begin{gather*}
x\p_x+y\p_y,\ p^{-3/2}\p_y,\ p\p_p+q\p_q,\ \p_q,\\
p\,(y\p_x-x\p_y)-2pq\,\p_p+(p^2-q^2)\p_q,\ \frac{p^2+q^2}{p^{3/2}}(p\p_x-q\p_y),\\
\frac{p^2+q^2}{2p^{3/2}}\p_y+\frac{q}{p^{3/2}}(q\p_y-p\p_x),\ p^{-3/2}(q\p_y-\tfrac13p\p_x)
 \end{gather*}
(notice that all the symmetries are actually affine).

This finishes realization (by models) of the universal bounds on submaximal symmetry dimensions.

 \begin{rk}\label{trans}
As a consequence of realization and the results of \cite{KT}, the equality $\fU=\fS$ yields local transitivity (around any regular point) for the
symmetry algebra of any c-projective structure with submaximal symmetry (this also applies to general c-projective structures considered
in the next section), as well as for any c-projective structure of fixed curvature type with the submaximal symmetry dimension $\fS_i$.
 \end{rk}

\section{C-projective structures: the general case.}\label{S4}

In this section we encode general (not necessary minimal) c-projective structures
as real (not necessary normal) parabolic geometries of type $\op{SL}(n+1,\C)_\R/P$.

Assume at first that $\pi:\mathcal{G}\to M$ is a principal $P$-bundle with a
Cartan connection $\omega=\omega_{-1}+\omega_0+\omega_1\in\Omega^1(\mathcal{G},\g)$.
The covariant derivative can be read off the Cartan connection of any $G_0$-reduction of this bundle
(Weyl structure) by the following formula, cf. \cite[Proposition 1.3.4]{CS}:
 $$
\nabla_XY=d\pi\circ\omega_{-1}^{-1}\bigl(\tilde X\cdot\omega_{-1}(\tilde Y)
-\omega_0(\tilde X)(\omega_{-1}(\tilde Y))\bigr),
 $$
where $\tilde X,\tilde Y$ are arbitrary lifts of $X,Y\in\mathcal{D}(M)$ to vector fields on $\mathcal{G}$
(independence of the lift for $Y$ is obvious, for $X$ follows from the equivariance of $\omega$;
one also checks independence of the point $a\in\pi^{-1}(x)$).

Since the first frame bundle reduction, driven by the almost complex structure $J$, forces $\omega_{-1}$
to be a complex isomorphism between $(T_xM,J)$ and $(\C^n,i)$ and since $\omega_0$ takes values in
$\mathfrak{gl}(n,\C)$, we conclude that $\nabla_XJY=J\nabla_XY$, i.e.\ $\nabla J=0$. Thus
parabolic geometries encode c-projective geometries with classes of complex connections
$\nabla$ only.

Moreover, a choice of connection $\nabla$ in a c-projective class with fixed (normalized) torsion
corresponds to a Weyl structure of $(\mathcal{G},\omega)$ and a change of this gives
a c-projective change of $\nabla$. Thus we have to show only that a c-projective class of a complex
connection $\nabla$ with a fixed torsion can be represented as a parabolic geometry.

To begin with let us see how we can modify the torsion keeping the class of $J$-planar curves fixed.
By \cite[Appendix A]{K$_1$} a (2,1)-tensor decomposes into $J$-linear/antilinear components
as follows
 $$
T_\nabla=T_\nabla^{++}+T_\nabla^{+-}+T_\nabla^{-+}+T_\nabla^{--},
 $$
where $T^{\epsilon_1,\epsilon_2}_\nabla(J^{k_1}X,J^{k_2}Y)=\epsilon_1^{k_1}\epsilon_2^{k_2}
J^{k_1+k_2}T^{\epsilon_1,\epsilon_2}_\nabla(X,Y)$.

The component $T^{++}$ is killed similar to the real case: $\nabla\simeq\nabla-\frac12T^{++}_\nabla$.
The component $T^{--}_\nabla=\frac14N_J$ is invariant \cite{Lic}. Next
$T_\nabla^{+-}=-T_\nabla^{-+}\circ\tau$, where
$\tau:\Lambda^2V^*\ot V\to\Lambda^2V^*\ot V$ ($V=TM$) is swap of the first two arguments,
so it is enough to treat $T_\nabla^{-+}$. Again by \cite{K$_1$} the gauge is
 $$
T_\nabla^{-+}\mapsto T_\nabla^{-+} - A,
 $$
where $A\in\Lambda^2V^*\ot V$ is antilinear-linear $(2,1)$-tensor,
which means $A(JX,Y)=-A(X,JY)=-JA(X,Y)$ $\forall X,Y\in V$.

 \begin{lem}\label{L1}
An antilinear-linear $(2,1)$-tensor $A$ satisfying the property $A(X,X)\in\C\cdot X$ has the
following form for some 1-form $\vp$:
 $$
A(X,Y)=\vp(X)Y+\vp(JX)JY.
 $$
 \end{lem}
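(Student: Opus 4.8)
The plan is to complexify and decompose $A$ along the $(1,0)$/$(0,1)$ splitting; this reduces the antilinear--linear condition to a single bilinear block, and turns the constraint $A(X,X)\in\C\cdot X$ into a rigidity statement that pins this block down completely. Write $V^\C=V^{1,0}\oplus V^{0,1}$ for the $\pm i$-eigenspaces of $J$, let conjugation interchange the two summands, and for $X\in V$ set $Z=\tfrac12(X-iJX)\in V^{1,0}$, so that $X=Z+\bar Z$. Extend $A$ to a $\C$-bilinear map $V^\C\times V^\C\to V^\C$.

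First I would unravel the type conditions $A(JX,Y)=-JA(X,Y)$ and $A(X,JY)=JA(X,Y)$ on the complexification. Antilinearity in the first slot forces $A(Z,\cdot)\in V^{0,1}$ for $Z\in V^{1,0}$ and $A(\bar Z,\cdot)\in V^{1,0}$, while linearity in the second slot forces $A(\cdot,W)$ to preserve the type of $W$. Combining these, $A$ vanishes on $V^{1,0}\times V^{1,0}$ and on $V^{0,1}\times V^{0,1}$, and is therefore determined by the single $\C$-bilinear block $A_1\colon V^{1,0}\times V^{0,1}\to V^{0,1}$, the conjugate block $V^{0,1}\times V^{1,0}\to V^{1,0}$ being $\overline{A_1}$ since $A$ is real.

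Next I would rewrite the hypothesis. Since $A(Z,Z)=A(\bar Z,\bar Z)=0$, one gets $A(X,X)=A_1(Z,\bar Z)+\overline{A_1(Z,\bar Z)}$, and as $\C\cdot X$ complexifies to $\mathrm{span}_\C\{Z,\bar Z\}$, the condition $A(X,X)\in\C\cdot X$ is equivalent to its $(0,1)$-part lying in $\C\bar Z$, i.e.
$$
A_1(Z,\bar Z)=\lambda(Z)\,\bar Z\qquad\text{for all }Z\in V^{1,0},
$$
for some scalar function $\lambda$. Choosing a basis $\{e_a\}$ of $V^{1,0}$ with conjugate basis $\{\bar e_a\}$, writing $A_1(e_a,\bar e_b)=\sum_c C^c_{a\bar b}\,\bar e_c$ and $Z=\sum_a z^a e_a$, this reads $\sum_{a,b}C^c_{a\bar b}z^a\overline{z^b}=\lambda(Z)\overline{z^c}$ for every $c$. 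Treating the holomorphic variables $z^a$ and the antiholomorphic variables $\overline{z^b}$ as independent (bihomogeneous polynomials are determined by their coefficients) and comparing coefficients shows that $\lambda$ is $\C$-linear, that $C^c_{a\bar b}=0$ for $b\neq c$, and that $\mu_a:=C^c_{a\bar c}$ is independent of $c$; hence $C^c_{a\bar b}=\mu_a\delta^c_b$, that is $A_1(Z,W)=\mu(Z)\,W$ for the $\C$-linear functional $\mu=\sum_a\mu_a z^a$ on $V^{1,0}$.

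Finally I would translate back. The map sending a real $1$-form to the restriction of its $\C$-linear extension to $V^{1,0}$ is an $\R$-linear isomorphism onto $\mathrm{Hom}_\C(V^{1,0},\C)$ (injectivity follows from reality), so there is a unique $\vp\in\Omega^1(M)$ with $\vp|_{V^{1,0}}=\tfrac12\mu$. A direct check, using $\vp(JZ)=i\vp(Z)$ and $JW=-iW$ on $V^{0,1}$, shows that $\vp(X)Y+\vp(JX)JY$ reproduces the block $A_1(Z,W)=2\vp(Z)W=\mu(Z)W$ (and, by reality, its conjugate), so the two tensors have identical complexifications and therefore coincide. The main obstacle is the coefficient-comparison step: one must argue carefully that the ``diagonal'' constraint, in which the second argument is forced to be the conjugate of the first, nonetheless rigidifies the whole bilinear block $A_1$. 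This is precisely where the independence of the holomorphic and antiholomorphic monomials is used, and it is also the point at which the \emph{linearity} of $\lambda$ (hence of $\vp$), as opposed to mere homogeneity, is extracted.
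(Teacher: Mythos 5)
Your proof is correct, but it follows a genuinely different route from the paper's. The paper never complexifies: it works directly on the real tangent space, first using real polarization ($X\mapsto X+Y$) to get $A(X,Y)+A(Y,X)\in\C\cdot\langle X,Y\rangle$, then the substitution $Y\mapsto JY$ together with the antilinear--linear type conditions to get the same for $A(X,Y)-A(Y,X)$; from $A(X,Y)\in\C\cdot\langle X,Y\rangle$ it writes $A(X,Y)=\Phi(X)Y+\Psi(Y)X$ for $\C$-valued 1-forms and lets the type conditions kill $\Psi$ and force $\Phi(X)=\vp(X)+i\,\vp(JX)$. You instead pass to $V^\C=V^{1,0}\oplus V^{0,1}$, use the type conditions to kill all blocks except $A_1:V^{1,0}\times V^{0,1}\to V^{0,1}$, and then exploit the independence of holomorphic and antiholomorphic variables (equivalently, polarization for sesquilinear maps) to show the diagonal constraint $A_1(Z,\bar Z)\in\C\bar Z$ forces $A_1(Z,\bar W)=\mu(Z)\bar W$. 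Both arguments are sound; the crux you flag --- that the diagonal rigidifies the whole block --- does go through (cross-multiplying to eliminate $\lambda$ before comparing coefficients, or invoking sesquilinear polarization, makes your ``comparing coefficients'' step airtight, and it is exactly the point where linearity of $\vp$ is extracted, just as you say). The paper's argument buys brevity and stays entirely elementary over $\R$; yours buys structural clarity: it exhibits $A$ as the ``trace'' element $\vp\ot\epsilon^{0,1}$ inside the $\Lambda^{1,1}$-type block, which meshes directly with the decomposition $\mathbb{T}^{-+}=\mathbb{T}^{-+}_\text{trace}\oplus\mathbb{T}^{-+}_\text{traceless}$ that the paper uses immediately after this lemma in Section \ref{S4}, and it makes the real-form bookkeeping ($\vp|_{V^{1,0}}=\tfrac12\mu$) explicit rather than implicit.
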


 \begin{proof} By polarization $A(X,X)\in\C\cdot X$ implies
 $$
A(X,Y)+A(Y,X)\in\C\cdot\langle X,Y\rangle.
 $$
Substitution $Y\mapsto JY$ and use of $\C$-linearity/antilinearity (and cancelation of $J$) yields $A(X,Y)-A(Y,X)\in\C\cdot\langle X,Y\rangle$. Hence $A(X,Y)\in\C\cdot\langle X,Y\rangle$,
i.e. $A(X,Y)=\Phi(X)Y+\Psi(Y)X$ for some $\C$-valued 1-forms $\Phi,\Psi$.
Antilinearity/linearity by $X,Y$ resp.\ yields $\Psi(Y)=0$, and $\Phi(X)=\vp(X)+i\,\vp(JX)$.
 \end{proof}

Denote the space of $A$-tensors from Lemma \ref{L1} by $\mathbb{T}^{-+}_\text{trace}$.
This is a submodule of $\mathbb{T}^{-+}$ part of decomposition of the space of torsion tensors module
$\mathbb{T}=\Lambda^2V^*\ot V$, decomposed into irreducible $\op{GL}(n,\C)$-submodules as follows
(the part $\mathbb{T}^{+-}\simeq\mathbb{T}^{-+}$ appears with
its swap, so only one part enters the decomposition)
 $$
\mathbb{T}= \mathbb{T}^{++}_\text{trace}\oplus\mathbb{T}^{++}_\text{traceless}
\oplus\mathbb{T}^{--}\oplus\mathbb{T}^{-+}_\text{traceless}\oplus\mathbb{T}^{-+}_\text{trace}.
 $$
By traceless we mean that the endomorphism obtained by filling one argument of the $(2,1)$-tensor
is traceless (and trace is the invariant complement). In complexification this decomposition writes\footnote{We 
abbreviate $\Lambda^{p,q}=\Lambda^{p,q}V^*$ and similar for $S^{p,q}$ throughout the text.}
 \begin{multline*}
\mathbb{T}^\C=(\La^{2,0}\ot V_{1,0}+\op{Cc})_\text{trace}
\oplus(\La^{2,0}\ot V_{1,0}+\op{Cc})_\text{traceless}\oplus\\
(\La^{0,2}\ot V_{1,0}+\op{Cc})
\oplus(\La^{1,1}\ot V_{1,0}+\op{Cc})_\text{traceless}
\oplus(\La^{1,1}\ot V_{1,0}+\op{Cc})_\text{trace},
 \end{multline*}
where $\op{Cc}$ stays for complex-conjugate as before.

Let us denote the projection to part $k$ in the above decomposition by $\pi_k$. From the discussion above
we can kill $\pi_1(T_\nabla)$, $\pi_2(T_\nabla)$, $\pi_5(T_\nabla)$ by a choice of representative within the
c-projective class of $\nabla$, but the components $\pi_3(T_\nabla)=\kappa_\text{III}$ and $\pi_4(T_\nabla)$ are invariant.

 \begin{cor}\label{c3}
The c-projective class $[\nabla]$ contains a minimal $J$-complex connection $\nabla$ iff $\pi_4(T_\nabla)=0$. \qed
 \end{cor}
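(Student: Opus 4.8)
The plan is to read the statement off directly from the torsion normalization carried out in the preceding paragraphs, so that the proof reduces to bookkeeping over the five-term decomposition of $\mathbb{T}=\Lambda^2V^*\ot V$. First I would recall that a complex connection $\nabla$ is minimal exactly when $T_\nabla=T_\nabla^{--}$, which in terms of the projections means $\pi_1(T_\nabla)=\pi_2(T_\nabla)=\pi_4(T_\nabla)=\pi_5(T_\nabla)=0$; the surviving piece $\pi_3(T_\nabla)=\tfrac14N_J=\kappa_\text{III}$ is forced and cannot be altered.

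Next I would invoke the gauge analysis already assembled above. Within the c-projective class one kills $\pi_1(T_\nabla)$ and $\pi_2(T_\nabla)$ via $\nabla\simeq\nabla-\tfrac12T^{++}_\nabla$, and one kills $\pi_5(T_\nabla)$ using the $A$-tensors of Lemma \ref{L1}, which by construction span precisely $\mathbb{T}^{-+}_\text{trace}$. Hence a single choice of representative sets $\pi_1=\pi_2=\pi_5=0$ simultaneously. The remaining components $\pi_3(T_\nabla)=\kappa_\text{III}$ and $\pi_4(T_\nabla)$ are c-projective invariants: the standard change preserves $T_\nabla$ outright, while the only torsion-altering gauge available in the $(-+)$ sector, subtraction of an $A$-tensor, lands in $\mathbb{T}^{-+}_\text{trace}$ and therefore touches $\pi_5$ but never $\pi_4$.

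Both implications then follow at once. If $\pi_4(T_\nabla)=0$, I pick the representative with $\pi_1=\pi_2=\pi_5=0$; what remains is $T=\pi_3=T^{--}$, so that representative is minimal. Conversely, if $[\nabla]$ already contains a minimal $\nabla'$, then $\pi_4(T_{\nabla'})=0$, and invariance of $\pi_4$ forces $\pi_4(T_\nabla)=0$. The main point requiring care---the place where Lemma \ref{L1} does the real work---is the claim that the torsion-modifying gauge in the $(-+)$ sector reaches only the trace part $\mathbb{T}^{-+}_\text{trace}$ and not the traceless part $\mathbb{T}^{-+}_\text{traceless}$; once that is granted, everything else is a direct transcription of the decomposition.
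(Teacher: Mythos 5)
Your proof is correct and is essentially the paper's own argument: the corollary is stated with an immediate \qed precisely because it follows from the preceding gauge discussion — $\pi_1,\pi_2,\pi_5$ can be killed within the class, while $\pi_3=\tfrac14N_J$ and $\pi_4$ are invariant since Lemma \ref{L1} shows the admissible $(-+)$ gauge tensors exhaust only $\mathbb{T}^{-+}_\text{trace}$. You have simply written out that transcription, including the correct identification of Lemma \ref{L1} as the crux.
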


Remaining freedom in choosing $\nabla$ is given by the standard formula:

 \begin{lem}\label{L2}
Two $J$-complex connections $\nabla,\bar\nabla$ with vanishing parts 1,2,5 of the torsion
are c-projectively equivalent iff
 $$
\bar\nabla_XY=\nabla_XY+\Psi(X)Y+\Psi(Y)X-\Psi(JX)JY-\Psi(JY)JX
 $$
for some 1-form $\Psi\in\Omega^1(M)$ (notice that $T_\nabla=T_{\bar\nabla}$).
 \end{lem}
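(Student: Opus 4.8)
The plan is to work throughout with the difference tensor $D(X,Y)=\bar\nabla_XY-\nabla_XY$, which is a genuine $(2,1)$-tensor since both $\nabla$ and $\bar\nabla$ are connections. First I would record its basic features. Because $\nabla J=\bar\nabla J=0$, subtracting the two identities $\bar\nabla_X(JY)=J\bar\nabla_XY$ and $\nabla_X(JY)=J\nabla_XY$ gives $D(X,JY)=JD(X,Y)$, so $D$ is $\C$-linear in its second argument. Next, the definition of $J$-planar curves shows that $\nabla\sim\bar\nabla$ holds if and only if $\nabla_XX-\bar\nabla_XX\in\langle X\rangle_\C$ for all $X$, i.e.\ iff $D(X,X)\in\C\cdot X$. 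Finally, antisymmetrizing, the torsion difference is $T_{\bar\nabla}(X,Y)-T_\nabla(X,Y)=D(X,Y)-D(Y,X)$, so $D$ is symmetric precisely when $T_{\bar\nabla}=T_\nabla$.

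For the ``only if'' direction, suppose $\nabla\sim\bar\nabla$. By hypothesis both connections have torsion with vanishing components $\pi_1,\pi_2,\pi_5$, while the invariant components $\pi_3=\kappa_\text{III}$ and $\pi_4$ coincide because they are unchanged under c-projective equivalence (as established just before Corollary \ref{c3}). Hence $T_{\bar\nabla}=T_\nabla$, the skew part of $D$ vanishes, and $D$ is symmetric. Combined with $\C$-linearity in the second slot, symmetry forces $\C$-linearity in both: $D(JX,Y)=D(Y,JX)=JD(Y,X)=JD(X,Y)$. Thus $D$ is a symmetric $\C$-bilinear map $V\times V\to V$ (with $V=TM$ regarded as complex via $J$) satisfying $D(X,X)\in\C\cdot X$. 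This is exactly the holomorphic analogue of the classical real-projective situation, and polarizing $D(X+Y,X+Y)\in\C\cdot\langle X+Y\rangle_\C$ (the symmetric counterpart of the polarization used in Lemma \ref{L1}) yields $D(X,Y)=\Phi(X)Y+\Phi(Y)X$ for a single $\C$-linear complex-valued $1$-form $\Phi$. Writing $\Psi=\op{Re}\Phi$, $\C$-linearity gives $\Phi(X)=\Psi(X)-i\,\Psi(JX)$; expanding $\Phi(X)Y=\Psi(X)Y-\Psi(JX)JY$ and the analogous term for $Y$ reproduces the stated formula.

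For the ``if'' direction I would verify the three requirements directly for the tensor $D(X,Y)=\Psi(X)Y+\Psi(Y)X-\Psi(JX)JY-\Psi(JY)JX$. It is manifestly symmetric, so $T_{\bar\nabla}=T_\nabla$ and in particular the normalized components $\pi_1,\pi_2,\pi_5$ stay zero, keeping $\bar\nabla$ in the admissible gauge. A short computation using $J^2=-\op{Id}$ shows $D(X,JY)=JD(X,Y)$, so $\bar\nabla J=0$ and $\bar\nabla$ is again a $J$-complex connection. Finally $D(X,X)=2\Psi(X)X-2\Psi(JX)JX\in\langle X\rangle_\C$, whence $\nabla\sim\bar\nabla$.

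The main obstacle is the reduction to the symmetric case: the genuinely useful input is that c-projective equivalence together with the torsion normalization forces $T_{\bar\nabla}=T_\nabla$, hence a symmetric difference tensor $D$. Once this is secured, the remaining step---the $\C$-bilinear polarization identifying $D$ with $\Phi(X)Y+\Phi(Y)X$---is routine and entirely parallel to the proof of Lemma \ref{L1}, the only difference being that here both summands survive, whereas antilinearity killed one of them there. I would take care that the ambient complex dimension $n>1$ is what makes the polarization argument run, exactly as in the real projective case.
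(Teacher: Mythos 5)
Your proof is correct and takes essentially the same route as the paper's: both reduce to showing that the difference tensor $A=\bar\nabla-\nabla$ is symmetric (forced by the vanishing of the gauge-adjustable torsion parts $\pi_1,\pi_2,\pi_5$ together with the invariance of $\pi_3,\pi_4$) and $\C$-bilinear, and then extract the stated form from the planarity condition $A(X,X)\in\langle X\rangle_\C$. The paper compresses all of this into the one-line assertion $A^{-}=0$, $A^{+}=A^{+}\circ\tau$ with a citation of \cite{K$_1$}, whereas you carry out the symmetry reduction and the polarization explicitly, in parallel with Lemma \ref{L1}.
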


 \begin{proof} Indeed, the tensor $A=\bar\nabla-\nabla$ satisfies $A^{-}=0$ and $A^{+}=A^{+}\circ\tau$
in terms of \cite{K$_1$}.
 \end{proof}

Let us denote $\varrho=\pi_1+\pi_2+\pi_5$, so that the assumption of the Lemma is $\varrho(T_\nabla)=0$.
Since this tensorial projection is applicable to the lowest part of the curvature $\kappa$ of the Cartan connection $\omega$,
viewed as $P$-equivariant function, we rewrite the equality as $\varrho(\kappa_1)=0$, where $\kappa_i$ is the part of
the curvature $\kappa$ of $\fg_0$-homogeneity $i$. 

Recall \cite{CS} that the Kostant codifferential $\p^*:\Lambda^i\g_-^*\ot\g\to\Lambda^{i-1}\g_-^*\ot\g$
is adjoint to the Lie algebra differential $\p:\Lambda^i\mathfrak{g}_-^*\ot\g\to\Lambda^{i+1}\mathfrak{g}_-^*\ot\g$.

Consider the curvature module $\mathbb{U}=\Lambda^2\mathfrak{g}_-^*\ot\g$ over
$\g_0=\mathfrak{gl}(n,\C)$, and decompose it into the graded parts
$\mathbb{U}=\mathbb{U}_1\oplus\mathbb{U}_2\oplus\mathbb{U}_3$. For $\mathbb{U}_2$ denote the
sum of real $\g_0$-irreps by $\mathbb{U}_2^r$ and the sum of complex $\g_0$-irreps by $\mathbb{U}_2^c$ 
(there are no quaternionic parts).

Let us define $\mathfrak{C}_1=\op{Ker}(\varrho)\subset\mathbb{U}_1$, $\mathfrak{C}_2=(\op{Ker}(\p^*)\cap\mathbb{U}_2^c)\oplus(\mathfrak{p}_+\cdot\mathfrak{C}_1\cap\mathbb{U}_2^r)$,
and also $\mathfrak{C}_3=\mathbb{U}_3$. Then $\mathfrak{C}=\mathfrak{C}_1\oplus\mathfrak{C}_2\oplus\mathfrak{C}_3$ \
is a $G_0$-submodule of $\mathbb{U}$.

 \begin{prop}\label{P3}
The submodule $\mathfrak{C}$ is $P$-invariant.
 \end{prop}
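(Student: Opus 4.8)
The plan is to use the Levi decomposition $P = G_0 \ltimes P_+$ with $P_+=\exp(\mathfrak{p}_+)$. Since $\mathfrak{C}$ is already known to be a $G_0$-submodule, it suffices to prove $P_+$-invariance, and as $P_+$ is connected this amounts to the infinitesimal condition $\mathfrak{p}_+\cdot\mathfrak{C}\subseteq\mathfrak{C}$. The first observation I would record is that $\mathfrak{p}_+$ acts on $\mathbb{U}=\Lambda^2\g_-^*\ot\g$ only through the adjoint action on the second factor: indeed $[\g_1,\g_{-1}]\subseteq\g_0\subseteq\mathfrak{p}$, so $\mathfrak{p}_+$ acts trivially on $\g/\mathfrak{p}\cong\g_-$ and hence on $\Lambda^2\g_-^*$. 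Consequently the $\mathfrak{p}_+$-action raises $\g_0$-homogeneity by exactly one, i.e.\ $\mathfrak{p}_+\cdot\mathbb{U}_j\subseteq\mathbb{U}_{j+1}$.

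With this, the required inclusion splits by homogeneity. Since $\mathbb{U}$ has no homogeneity-$4$ part, $\mathfrak{p}_+\cdot\mathfrak{C}_3=0$; and since $\mathfrak{C}_3=\mathbb{U}_3$, we automatically have $\mathfrak{p}_+\cdot\mathfrak{C}_2\subseteq\mathbb{U}_3=\mathfrak{C}_3$. Thus the entire content reduces to the single inclusion $\mathfrak{p}_+\cdot\mathfrak{C}_1\subseteq\mathfrak{C}_2$. Next I would note that $\mathfrak{p}_+\cdot\mathfrak{C}_1$ is a $\g_0$-submodule of $\mathbb{U}_2$, because the action map $\mathfrak{p}_+\ot\mathfrak{C}_1\to\mathbb{U}_2$ is $\g_0$-equivariant and $\mathfrak{C}_1=\op{Ker}\varrho$ is $\g_0$-invariant. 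Because real-type and complex-type $\g_0$-irreps are never isomorphic and there are no quaternionic constituents, $\mathbb{U}_2^r$ and $\mathbb{U}_2^c$ share no irreducible constituents; by complete reducibility every $\g_0$-submodule of $\mathbb{U}_2$ splits accordingly, so
\[
\mathfrak{p}_+\cdot\mathfrak{C}_1 = \bigl(\mathfrak{p}_+\cdot\mathfrak{C}_1\cap\mathbb{U}_2^r\bigr)\oplus\bigl(\mathfrak{p}_+\cdot\mathfrak{C}_1\cap\mathbb{U}_2^c\bigr).
\]
The real summand lies in $\mathfrak{C}_2$ by the very definition of $\mathfrak{C}_2$, so it remains only to show that the complex summand lies in $\op{Ker}(\p^*)$.

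For this last point the tool is that the Kostant codifferential $\p^*$ is $P$-equivariant \cite{CS}, hence commutes with the $\mathfrak{p}_+$-action: $\p^*(X\cdot\phi)=X\cdot\p^*\phi$ for $X\in\mathfrak{p}_+$. Since $\p^*$ is also $\g_0$-equivariant it preserves the real/complex splitting, so the complex-type part of $\p^*(\mathfrak{p}_+\cdot\mathfrak{C}_1)$ equals $\p^*$ of the complex summand above; it therefore suffices to prove that $\mathfrak{p}_+\cdot\p^*(\mathfrak{C}_1)$ has vanishing complex-type component. As $\mathfrak{C}_1=\op{Ker}\varrho=\mathbb{T}^{--}\oplus\mathbb{T}^{-+}_\text{traceless}$ and $\p^*$ annihilates the harmonic summand $\mathbb{T}^{--}$, this is a statement purely about $\p^*(\mathbb{T}^{-+}_\text{traceless})$. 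I expect \textbf{this verification to be the main obstacle}: it is exactly the place where the modified normalization must be checked to be self-consistent. I would carry it out by the explicit $\g_0$-module computation, using the Kostant formula for $\p^*$ on $\mathbb{U}_1=\Lambda^2\g_-^*\ot\g_{-1}$ (where, the geometry being $|1|$-graded, $\g_2=0$ kills one term, leaving $\p^*(X\we Y\ot A)=X\ot[Y,A]-Y\ot[X,A]$) and then tracking the image under $\mathfrak{p}_+$ in the root/weight notation of the Example and Tables, so as to confirm that no complex-type irrep of $\g_-^*\ot\g_1$ survives. Once this is established, all three inclusions hold and $\mathfrak{C}$ is $P$-invariant.
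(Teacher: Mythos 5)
Your reduction is sound and follows the same skeleton as the paper's proof: reduce to $\mathfrak{p}_+$-invariance, observe that $\mathfrak{p}_+$ acts trivially on the $\Lambda^2\g_-^*$ factor and raises homogeneity by one, dispose of $\mathfrak{p}_+\cdot\mathfrak{C}_2\subseteq\mathbb{U}_3=\mathfrak{C}_3$ for free, split $\mathfrak{p}_+\cdot\mathfrak{C}_1$ into real- and complex-type summands (legitimate, since $\mathfrak{p}_+\cdot\mathfrak{C}_1$ is a $\g_0$-submodule and real-type and complex-type irreps are never isomorphic), absorb the real summand into $\mathfrak{C}_2$ by definition, and handle the contribution of $\mathbb{T}^{--}$ via $\mathfrak{p}_+$-equivariance of $\p^*$ together with $\p^*(\mathbb{T}^{--})=0$ --- this last point is literally the paper's argument for why the relevant copy of its module $\mathbb{K}_2$ lies in $\op{Ker}(\p^*)$. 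However, what you have at the end is not a proof but a correct statement of what remains to be proved, and that remainder is the actual mathematical content of the proposition: you must show that the complex-type part of $\mathfrak{p}_+\cdot\mathbb{T}^{-+}_\text{traceless}$ (equivalently, in your reformulation, of $\mathfrak{p}_+\cdot\p^*(\mathbb{T}^{-+}_\text{traceless})$) is killed by $\p^*$. Note that this cannot be waved away: $\p^*$ is injective on $\mathbb{T}^{-+}_\text{traceless}$ (the kernel of $\p^*$ in homogeneity $1$ is exactly the harmonic module $\mathbb{T}^{--}$), so no soft equivariance argument will finish it, and your reformulation lands you in $\g_-^*\ot\g_1$ with essentially the same tensor computation you started with.

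The paper closes precisely this step by a multiplicity/Schur argument that makes the verification nearly free once the decompositions are written: decompose $\mathbb{U}_2$ into $\g_0$-irreps $\mathbb{K}_1\oplus3\mathbb{K}_2\oplus\mathbb{K}_3\oplus\dots\oplus\mathbb{K}_7\oplus4\mathbb{K}_8$, decompose the target of $\p^*|_{\mathbb{U}_2}$ as $\Lambda^1\g_-^*\ot\g_1\cong\mathbb{K}_2\oplus\mathbb{K}_3\oplus2\mathbb{K}_8$, and check that $\mathfrak{p}_+\cdot\mathbb{T}^{-+}_\text{traceless}\subseteq\mathbb{K}_5\oplus\mathbb{K}_6\oplus2\mathbb{K}_8$. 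The complex constituents $\mathbb{K}_5,\mathbb{K}_6$ occur with multiplicity one in $\mathbb{U}_2$ and do not occur at all in the target of $\p^*$, so Schur's lemma forces them into $\op{Ker}(\p^*)$; the $2\mathbb{K}_8$ piece is real-type and is absorbed by the definition of $\mathfrak{C}_2$ (this is why $\mathfrak{C}_2$ was defined with $(\mathfrak{p}_+\cdot\mathfrak{C}_1)\cap\mathbb{U}_2^r$ rather than $\op{Ker}(\p^*)\cap\mathbb{U}_2^r$ --- in fact $\op{Ker}(\p^*)\cap 2\tilde{\mathbb{K}_8}=0$). Without identifying the complex constituents of the image and comparing them against the target of $\p^*$, your proof is incomplete; and you should also note that the decompositions degenerate for $n=3$ (where $\mathbb{K}_2\cong\mathbb{K}_4$ as modules of the semisimple part) and for $n=2$, so the final step has to be rechecked in those cases, as the paper does.
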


 \begin{proof}
It suffices to check $\mathfrak{p}_+$-invariance. 
Let us decompose the graded parts of $\mathbb{U}=\mathbb{U}_1\oplus\mathbb{U}_2\oplus\mathbb{U}_3$ into
$\g_0$-irreducibles. For $\mathbb{U}_1=\mathbb{T}^\C$ this was done before Corollary \ref{c3}, whence 
 $$
\mathfrak{C}_1=\mathbb{T}^{--}\oplus\mathbb{T}^{-+}_\text{traceless}
 $$
with the complexification\footnote{Recall that $\odot$ denotes the Cartan product
(the same as previous "traceless").}
 $$
\mathfrak{C}_1^\C=(\Lambda^{0,2}\ot V_{1,0})\oplus(\Lambda^{1,1}\odot V_{1,0})+\op{Cc}.
 $$
Next we decompose $\mathbb{U}_2$ into $\g_0$-irreducibles. For $n\ge4$ we have:
 \com{
In complexification 
 $$
(\Lambda^2\g_-^*\otimes\g_0)^\C=
(\Lambda^{2,0}+\Lambda^{1,1}+\Lambda^{0,2})\otimes(\Lambda^{1,0}\otimes V_{1,0}) + \operatorname{Cc}
 $$
and we compute for $n\ge4$:
 \begin{align*}
 \Lambda^{2,0} \otimes \Lambda^{1,0} \otimes V_{1,0} &= ( \Lambda^{2,0} \odot \Lambda^{1,0} \odot V_{1,0}) \oplus 2 \Lambda^{2,0} 
\oplus S^{2,0}\\ 
&\hspace{112pt} \oplus (\Lambda^{3,0} \odot V_{1,0}) + \operatorname{Cc},\\
 \Lambda^{1,1} \otimes \Lambda^{1,0} \otimes V_{1,0} &= (\Lambda^{2,1} \odot V_{1,0}) \oplus (S^{2,1} \odot V_{1,0}) \oplus 2\Lambda^{1,1}+ \operatorname{Cc},\\
 \Lambda^{0,2} \otimes \Lambda^{1,0} \otimes V_{1,0} &= (\Lambda^{1,2} \odot V_{1,0}) \oplus \Lambda^{0,2} + \operatorname{Cc}.
 \end{align*}
Hence, we get
 \begin{align*}
(\Lambda^2\g_-^*\otimes\g_0)^\C&=[(\Lambda^{2,0}\odot\Lambda^{1,0}\odot V_{1,0})\oplus
3\,\Lambda^{2,0}\oplus S^{2,0}\oplus(\Lambda^{3,0}\odot V_{1,0}) \\
&\hspace{-20pt}\oplus(\Lambda^{2,1}\odot V_{1,0})\oplus
(S^{2,1}\odot V_{1,0})\oplus (\Lambda^{1,2}\odot V_{1,0}) + \operatorname{Cc}] \oplus 4\,\Lambda^{1,1}.
 \end{align*}
 }
 \begin{multline*}
\!\!(\Lambda^2\g_-^*\ot\g_0)^\C=
(\Lambda^{2,0}\oplus\Lambda^{1,1}\oplus\Lambda^{0,2})\otimes(\Lambda^{1,0}\odot V_{1,0}+\C) + \op{Cc}\\
=[(\Lambda^{2,0}\odot\Lambda^{1,0}\odot V_{1,0})\oplus
3\,\Lambda^{2,0}\oplus S^{2,0}\oplus(\Lambda^{3,0}\odot V_{1,0})\oplus \\
(\Lambda^{2,1}\odot V_{1,0})\oplus
(S^{2,1}\odot V_{1,0})\oplus (\Lambda^{1,2}\odot V_{1,0}) + \op{Cc}] \oplus 4\,\Lambda^{1,1}.
 \end{multline*}
Every term in $[\dots]$ together with its complex conjugate gives a real irreducible submodule
($\Lambda^{1,1}$ is already real irreducible)
that we denote successively (not counting multiplicity) by $\mathbb{K}_1,\dots,\mathbb{K}_8$, and we get:
 $$
\mathbb{U}_2= 
\underbrace{\mathbb{K}_1\oplus3\mathbb{K}_2\oplus\mathbb{K}_3\oplus\mathbb{K}_4\oplus\mathbb{K}_5
\oplus\mathbb{K}_6\oplus\mathbb{K}_7}_{\mathbb{U}_2^c}
\oplus \underbrace{4\mathbb{K}_8}_{\mathbb{U}_2^r}.
 $$
We will not need the decomposition of $\mathbb{U}_3$.

The module $\mathbb{U}_2$ is mapped by $\p^*$ onto the module
 $$
(\Lambda^1\g_-^*\ot\g_1)^\C=
[S^{2,0}\oplus\Lambda^{2,0} +\op{Cc}] \oplus 2\,\Lambda^{1,1}=
(\mathbb{K}_2\oplus\mathbb{K}_3\oplus2\mathbb{K}_8)^\C.
 $$
Consequently we get the following decomposition into $\g_0$-irreps:
 \begin{equation}\label{p*2}
\op{Ker}(\p^*)\cap\mathbb{U}_2=
\mathbb{K}_1\oplus2\mathbb{K}_2\oplus\mathbb{K}_4\oplus\mathbb{K}_5\oplus\mathbb{K}_6
\oplus\mathbb{K}_7\oplus2\mathbb{K}_8.
 \end{equation}
The action of $\mathfrak{p}_+$ is trivial on the first factor of $\La^2\g_-^*\ot\g$, and so
the restricted action on $\mathfrak{C}$ maps $\mathbb{T}^{--}$ to $\mathbb{K}_2\oplus\mathbb{K}_7$ --
notice that this $\mathbb{K}_2$ belongs to $\op{Ker}(\p^*)$ by $\mathfrak{p}_+$-equivariance of $\p^*$,
so it is one of the terms in \eqref{p*2}.
Also, $\mathfrak{p}_+$ maps $\mathbb{T}^{-+}_\text{traceless}$ to $\mathbb{K}_5\oplus\mathbb{K}_6\oplus2\mathbb{K}_8$,
but the latter (double) term differs from the similarly named terms in \eqref{p*2}.
To distinguish these denote
$2\tilde{\mathbb{K}_8}=(\mathfrak{p}_+\cdot\mathfrak{C}_1)\cap4\mathbb{K}_8\subset\mathbb{U}_2$
($\op{Ker}(\p^*)\cap2\tilde{\mathbb{K}_8}=0$). Then 
 \begin{multline*}
\mathfrak{C}_2= (\op{Ker}(\p^*)\cap(\mathbb{K}_1\oplus3\mathbb{K}_2\oplus\dots\oplus\mathbb{K}_7))\oplus
((\mathfrak{p}_+\cdot\mathfrak{C}_1)\cap4\mathbb{K}_8)\\
=\mathbb{K}_1\oplus2\mathbb{K}_2\oplus\mathbb{K}_4\oplus\mathbb{K}_5\oplus\mathbb{K}_6
\oplus\mathbb{K}_7\oplus2\tilde{\mathbb{K}_8}.
 \end{multline*}
Now $\mathfrak{p}_+$ maps $\mathfrak{C}_1$ to $\mathfrak{C}_2$, and obviously it maps the latter to
$\mathfrak{C}_3$. Therefore we conclude invariance with respect to $P=G_0\ltimes\mathfrak{p}_+$ for $n\ge4$.

For $n=3$ we have $\mathbb{K}_2=\mathbb{K}_4$ as $A_2$-modules (ignoring $\mathfrak{z}(\g_0)$).
So only one multiplicity changes in the decomposition of $\mathfrak{C}_2$. For
$n=2$ more terms in the above decompositions change/disappear,
but the arguments persist and the conclusion is not altered.
 \end{proof}

 \begin{rk}
A normalization different from the standard $\p^*\kappa=0$ was used previously by D.Fox in \cite{F}
(our normalization differs from his).
 \end{rk}

 \begin{lem}\label{LtrC2}
The subspace $\mathfrak{C}_2$ is complementary to $\op{Im}(\p)\subset\Lambda^2\mathfrak{g}_-^*\ot\g_0$.
 \end{lem}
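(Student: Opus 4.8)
The plan is to prove the equivalent statement $\mathbb{U}_2=\op{Im}(\p)\oplus\mathfrak{C}_2$, where $\op{Im}(\p)$ now denotes the homogeneity-$2$ piece $\p(\g_-^*\ot\g_1)\subset\mathbb{U}_2$. First I would invoke the algebraic Hodge (Kostant) decomposition in fixed homogeneity, $\mathbb{U}_2=\op{Im}(\p)\oplus(\op{Ker}(\p^*)\cap\mathbb{U}_2)$, which is $\g_0$-invariant and satisfies $\op{Im}(\p)\cap\op{Ker}(\p^*)=0$ (as $\p^*$ is the adjoint of $\p$). Both $\op{Im}(\p)$ and $\op{Ker}(\p^*)$ are $\g_0$-submodules, and since the real-type and complex-type constituents of $\mathbb{U}_2$ are pairwise non-isomorphic, every such submodule respects the isotypic splitting $\mathbb{U}_2=\mathbb{U}_2^c\oplus\mathbb{U}_2^r$. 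Hence it suffices to check complementarity of $\mathfrak{C}_2$ with $\op{Im}(\p)$ over $\mathbb{U}_2^c$ and over $\mathbb{U}_2^r$ separately.

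Over $\mathbb{U}_2^c$ this is immediate: by definition $\mathfrak{C}_2\cap\mathbb{U}_2^c=\op{Ker}(\p^*)\cap\mathbb{U}_2^c$, so the Hodge splitting already gives the result once one matches multiplicities using \eqref{p*2} from the proof of Proposition \ref{P3}. Concretely, $\mathbb{K}_1,\mathbb{K}_4,\mathbb{K}_5,\mathbb{K}_6,\mathbb{K}_7$ lie entirely in $\op{Ker}(\p^*)$, the unique copy of $\mathbb{K}_3$ lies entirely in $\op{Im}(\p)$, and of the three copies of $\mathbb{K}_2$ two lie in $\op{Ker}(\p^*)$ and the remaining direction is transverse because $\op{Im}(\p)\cap\op{Ker}(\p^*)=0$. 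Over the real part $\mathbb{U}_2^r=4\mathbb{K}_8$ we have $\mathfrak{C}_2\cap\mathbb{U}_2^r=2\tilde{\mathbb{K}_8}$, while $\op{Im}(\p)\cap 4\mathbb{K}_8\cong 2\mathbb{K}_8$ and $\op{Ker}(\p^*)\cap 4\mathbb{K}_8\cong 2\mathbb{K}_8$. As the dimensions add up, everything reduces to the single transversality $2\tilde{\mathbb{K}_8}\cap\op{Im}(\p)=0$.

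This last point is the crux, and it is \emph{not} a formal consequence of the relation $\op{Ker}(\p^*)\cap 2\tilde{\mathbb{K}_8}=0$ established in the proof of Proposition \ref{P3}. That relation only says that $2\tilde{\mathbb{K}_8}$ projects isomorphically onto $\op{Im}(\p)\cap 4\mathbb{K}_8$ along $\op{Ker}(\p^*)$, i.e. that $2\tilde{\mathbb{K}_8}$ is the graph $\{u+Su\}$ of some $\g_0$-module map $S\colon\op{Im}(\p)\cap 4\mathbb{K}_8\to\op{Ker}(\p^*)\cap 4\mathbb{K}_8$; the transversality we want is then exactly the invertibility of $S$, since $2\tilde{\mathbb{K}_8}\cap\op{Im}(\p)=\op{Ker}(S)$. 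A generic graph meeting $\op{Ker}(\p^*)$ trivially can still meet $\op{Im}(\p)$ nontrivially, so $S$ genuinely has to be computed.

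To settle it I would pass to the complexification and analyze the $\mathbb{K}_8$-isotypic component via lowest weight vectors. Recall from the proof of Proposition \ref{P3} that $2\tilde{\mathbb{K}_8}$ arises as the $4\mathbb{K}_8$-part of $\mathfrak{p}_+\cdot\mathbb{T}^{-+}_\text{traceless}$, whose complexification is $\Lambda^{1,1}\odot V_{1,0}+\,\op{Cc}$. Using that $\mathfrak{p}_+$ acts only on the $\g$-slot of $\Lambda^2\g_-^*\ot\g$, I would apply root vectors $e_\beta\in\g_1$ to an explicit lowest weight vector of $\Lambda^{1,1}\odot V_{1,0}$, obtaining explicit generators of $2\tilde{\mathbb{K}_8}$. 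Since $S$ is $\g_0$-equivariant between the two isotypic copies of $\mathbb{K}_8\cong\Lambda^{1,1}$, its invertibility can be read off from its values on these generators, i.e. by expanding them in the Hodge splitting and computing a small determinant; here the $\op{Im}(\p)$-directions are produced by $(\p\omega)(X,Y)=[X,\omega(Y)]-[Y,\omega(X)]$ with $\omega\in\g_-^*\ot\g_1$. Disentangling the two distinct bilinear operations $Z\cdot\psi=\op{ad}_Z\circ\psi$ and $\p$ on the common $\Lambda^{1,1}$-type is where the real work lies, and I expect this explicit root computation to be the main obstacle; the complex part and the multiplicity bookkeeping are routine. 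The degenerate cases $n=3$ and $n=2$, where some multiplicities collapse as already noted in the proof of Proposition \ref{P3}, are treated by the same method with correspondingly shortened module lists.
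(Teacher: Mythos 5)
Your reduction is the same as the paper's: complementarity of $\op{Ker}(\p^*)\cap\mathbb{U}_2$ and $\op{Im}(\p)\cap\mathbb{U}_2$ plus isotypic bookkeeping brings everything down to the single claim $2\tilde{\mathbb{K}_8}\cap\op{Im}(\p)=0$, and you are also right that this claim is \emph{not} a formal consequence of $\op{Ker}(\p^*)\cap 2\tilde{\mathbb{K}_8}=0$. But at exactly this point your proposal stops being a proof: you reformulate the claim as invertibility of a graph map $S$ and then defer the verification ("the real work", "the main obstacle") to an unexecuted root-vector/determinant computation. Identifying the crux is not the same as resolving it, so as written the argument has a genuine gap precisely at the only non-routine step.

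What you are missing is the observation that makes this step cheap, and it is the heart of the paper's proof: since $\p^2=0$, we have $\op{Im}(\p)\subseteq\op{Ker}(\p)$, so to prove $2\tilde{\mathbb{K}_8}\cap\op{Im}(\p)=0$ it suffices to show that $\p$ is \emph{injective} on $2\tilde{\mathbb{K}_8}$ --- no graph map, no Hodge splitting, no $\p^*$ needed. The paper then writes a general element of $2\tilde{\mathbb{K}_8}$ explicitly as $\vp_{1,0}\ot\vp_{0,1}\ot(a\,\epsilon^{1,0}+b\,\epsilon^{0,1})$, where $\epsilon^{1,0}\in\La^{1,0}\ot V_{1,0}$ is the identity (and similarly for $\epsilon^{0,1}$), and evaluates $\p$ of it on triples of vectors: on $(u^{1,0},v^{1,0},w^{0,1})$ one gets $a\,(\vp_{1,0}(v^{1,0})u^{1,0}-\vp_{1,0}(u^{1,0})v^{1,0})\,\vp_{0,1}(w^{0,1})$, forcing $a=0$, and the type $(u^{1,0},v^{0,1},w^{0,1})$ similarly forces $b=0$. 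This two-line computation replaces your proposed computation of $S$, which would additionally require expanding the generators of $2\tilde{\mathbb{K}_8}$ in the decomposition $\op{Im}(\p)\oplus\op{Ker}(\p^*)$ --- considerably more work than the paper needs, and work you have not done. So: correct setup, correct diagnosis of where the difficulty lies, but the lemma itself remains unproved in your write-up, and the $\p^2=0$ trick is the missing idea that closes it.
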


 \begin{proof}
Since $\op{Ker}(\p^*)\cap\mathbb{U}_2$ is complementary to $\op{Im}(\p)\cap\mathbb{U}_2$, it is enough
to show that $(2\tilde{\mathbb{K}_8})\cap\op{Im}(\p)=0$. For this, because $\p^2=0$, it is enough to show
that the map $\p:2\tilde{\mathbb{K}_8}\to\Lambda^3\mathfrak{g}_-^*\ot\g_{-}$ is injective.

An element $\z$ of this module has the form
$\vp_{1,0}\ot\vp_{0,1}\ot(a\epsilon^{1,0}+b\epsilon^{0,1})$,
where $\vp_{1,0}\in\La^{1,0}$ is some element (assume nonzero),
$\epsilon^{1,0}\in\La^{1,0}\ot V_{1,0}$ is the identity,
and similar for $\vp_{0,1},\epsilon^{0,1}$, while $a,b$ are some numbers.

For vectors $u^{1,0},v^{1,0},w^{0,1}\in V^\C=V_{1,0}\oplus V_{0,1}$ of the indicated type
 $$
\p\z(u^{1,0},v^{1,0},w^{0,1})= a\cdot
(\vp_{1,0}(v^{1,0})u^{1,0}-\vp_{1,0}(u^{1,0})v^{1,0})\,\vp_{0,1}(w^{0,1}).
 $$
If this is zero for all choices $u^{1,0},v^{1,0},w^{0,1}$, then $a=0$.
Similarly, substituting $u^{1,0},v^{0,1},w^{0,1}$ we obtain $b=0$.
 \end{proof}

Now comes the main result of this section (which also solves the equivalence problem for general c-projective structures).

 \begin{theorem}
There is an equivalence of categories between c-projective structures $(M,J,[\nabla])$ 
and parabolic geometries of type $\op{SL}(n+1,\C)_\R/P$
with the curvature normalized by the ($P$-invariant) condition $\kappa\in\mathfrak{C}$.
 \end{theorem}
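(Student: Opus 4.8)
The plan is to realize the statement as an instance of the general equivalence of categories between parabolic geometries of a fixed type $(G,P)$ equipped with a prescribed $P$-invariant normalization condition and their underlying infinitesimal structures; the substantive work is to verify that $\mathfrak{C}$ is an admissible normalization and to identify the resulting underlying data with a c-projective structure. Recall that modifying the Cartan connection $\omega$ within the freedom preserving the induced geometry alters its curvature function $\kappa:\mathcal{G}\to\mathbb{U}=\Lambda^2\g_-^*\ot\g$ exactly by elements of $\op{Im}(\p)$ in each $\g_0$-homogeneity. Hence, to pin $\kappa$ down uniquely and to run the prolongation/normalization procedure producing a canonical Cartan connection, it suffices to exhibit a $P$-invariant $\mathfrak{C}\subset\mathbb{U}$ complementary to $\op{Im}(\p)$ in every homogeneity. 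I would first record that, since the geometry is $|1|$-graded, $\mathbb{U}=\mathbb{U}_1\oplus\mathbb{U}_2\oplus\mathbb{U}_3$ with $\mathbb{U}_i=\Lambda^2\g_-^*\ot\g_{i-2}$, that $\p$ preserves homogeneity, and that $\op{Im}(\p)\cap\mathbb{U}_3=0$ since there are no $1$-cochains of homogeneity $3$.

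The three complementarity checks then read as follows. In degree $3$ it is automatic, as $\mathfrak{C}_3=\mathbb{U}_3$; in degree $2$ it is exactly Lemma \ref{LtrC2}; and in degree $1$ it amounts to the identity $\op{Im}(\p)\cap\mathbb{U}_1=\op{Im}(\varrho)$, so that $\mathfrak{C}_1=\op{Ker}(\varrho)$ is complementary. For the last point I would argue that the homogeneity-$1$ image of $\p$ is precisely the span of those torsion modifications preserving the class of $J$-planar curves, namely the $\mathbb{T}^{++}$-parts (removed as in the real case, $\nabla\simeq\nabla-\tfrac12 T^{++}_\nabla$) together with the trace part $\mathbb{T}^{-+}_\text{trace}$ of the $A$-tensors identified in Lemma \ref{L1}; this is exactly the range of $\varrho=\pi_1+\pi_2+\pi_5$, while $\kappa_\text{III}=\pi_3(T_\nabla)$ and $\pi_4(T_\nabla)$ remain invariant. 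Combined with Proposition \ref{P3}, which supplies $P$-invariance, this gives $\mathbb{U}=\op{Im}(\p)\oplus\mathfrak{C}$ as $P$-modules.

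With $\mathfrak{C}$ established as an admissible normalization, I would invoke the general machinery of \cite{CS}, adapted to this nonstandard complement: the prolongation construction builds from the underlying data a principal $P$-bundle carrying a unique Cartan connection whose curvature lies in $\mathfrak{C}$, and conversely any $\mathfrak{C}$-normalized geometry recovers the underlying data functorially, with morphisms corresponding on the nose. It then remains to identify this underlying data with a c-projective structure: the first frame reduction driven by $J$ forces $\omega_{-1}$ to be $\C$-linear and $\omega_0$ to take values in $\mathfrak{gl}(n,\C)$, so the covariant derivative read off via the formula for $\nabla$ in terms of $\omega$ given at the start of this section satisfies $\nabla J=0$; a change of Weyl structure changes $\nabla$ by a $1$-form exactly as in Lemma \ref{L2}, i.e.\ by a c-projective change; and the normalization $\kappa_1\in\mathfrak{C}_1=\op{Ker}(\varrho)$ says $\varrho(T_\nabla)=0$, which by Corollary \ref{c3} singles out the canonical torsion representative within the class. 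Thus a $\mathfrak{C}$-normalized parabolic geometry carries precisely the data of a c-projective structure $(M,J,[\nabla])$, and the correspondence is an equivalence of categories.

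I expect the main obstacle to lie in the degree-$2$ complementarity together with the transfer of the abstract machinery, rather than in the identification step. One must be certain that replacing the Kostant complement $\op{Ker}(\p^*)\cap\mathbb{U}_2$ by the mixed space $\mathfrak{C}_2=(\op{Ker}(\p^*)\cap\mathbb{U}_2^c)\oplus(\mathfrak{p}_+\cdot\mathfrak{C}_1\cap\mathbb{U}_2^r)$ still yields a genuine complement to $\op{Im}(\p)$ — this is the role of Lemma \ref{LtrC2}, whose proof hinges on the injectivity of $\p$ on the $2\tilde{\mathbb{K}_8}$ summand — and that the existence-and-uniqueness statement of the fundamental theorem used only $P$-invariance and complementarity of the normalization, so that the standard argument transfers verbatim to $\mathfrak{C}$.
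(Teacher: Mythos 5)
Your strategy---show that $\mathfrak{C}$ is $P$-invariant and complementary to $\op{Im}(\p)$ in \emph{every} homogeneity, then invoke the general existence-and-uniqueness machinery---breaks down at homogeneity $1$, and it breaks precisely where the content of the theorem lies. The claim $\op{Im}(\p)\cap\mathbb{U}_1=\op{Im}(\varrho)$ conflates two different gauge freedoms. The image of $\p:\g_{-1}^*\ot\g_0\to\La^2\g_{-1}^*\ot\g_{-1}$ consists of the torsion changes $(X,Y)\mapsto\phi(X)Y-\phi(Y)X$ under \emph{arbitrary} modifications $\phi$ of the complex connection, not only those preserving the class of $J$-planar curves. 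The paper's own gauge discussion before Lemma \ref{L1} (quoting \cite{Lic}, \cite{K$_1$}) says exactly this: the full complex-connection gauge can shift $T_\nabla^{-+}$ by an \emph{arbitrary} antilinear-linear tensor, so $\op{Im}(\p)\cap\mathbb{U}_1$ contains parts $1,2,4,5$ --- in particular all of $\mathbb{T}^{-+}_\text{traceless}$ (part $4$); equivalently, this is why minimal connections always exist. Lemma \ref{L1} cuts this gauge down to part $5$ only when one additionally insists that $J$-planar curves be preserved. Consequently $\mathfrak{C}_1\cap\op{Im}(\p)\supset\mathbb{T}^{-+}_\text{traceless}\neq0$, and $\mathfrak{C}$ is \emph{not} complementary to $\op{Im}(\p)$: a count gives $\dim\mathfrak{C}_1+\dim(\op{Im}(\p)\cap\mathbb{U}_1)=\dim\mathbb{U}_1+\dim\mathbb{T}^{-+}_\text{traceless}$. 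What is true is that $\mathfrak{C}_1=\op{Ker}(\varrho)$ is complementary to the strictly smaller subspace $\p(\mathcal{A})$, where $\mathcal{A}\subset\g_{-1}^*\ot\g_0$ consists of the modifications $\phi$ with $\phi(X)X\in\langle X,JX\rangle$, i.e.\ the class-preserving ones; this is the content of Lemma \ref{L1} and the discussion preceding Corollary \ref{c3}.

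This is not a repairable slip inside your framework, because the equivalence being proved is not with $G_0$-structures: c-projective geometry is of projective type (the underlying almost complex structure does not determine the geometry, $H^1(\g_-,\g)$ having a positive-homogeneity component), so the class $[\nabla]$ is irreducible input, and the homogeneity-$1$ freedom available when building the Cartan connection from $(M,J,[\nabla])$ is only the class-preserving gauge $\p(\mathcal{A})$, never all of $\op{Im}(\p)$. Indeed, if your complementarity held and the general normalization theorem applied as you invoke it, one could gauge $\kappa_\text{IV}=\pi_4(T_\nabla)$ to zero --- contradicting the fact, central to Section \ref{S4}, that $\pi_4(T_\nabla)$ is an invariant of the class obstructing minimality. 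The paper's proof respects this asymmetry: homogeneity $1$ is normalized \emph{inside the class}, by defining $\mathcal{G}$ fibrewise as the set of pairs $\theta(u)+\gamma^\nabla(u)$ with $\nabla\in[\nabla]$ and $\varrho(T_\nabla)=0$, the residual freedom being exactly the $\g_1$-translations of Lemma \ref{L2}; the complementarity argument --- Lemma \ref{LtrC2}, which is a statement about homogeneity $2$ only --- is then used solely to fix $\omega_1$ by the condition $\kappa_2\in\mathfrak{C}_2$, and homogeneity $3$ is vacuous. Your degree-$2$ and degree-$3$ observations agree with this, but the degree-$1$ step as you state it is false, and with it the appeal to the general machinery collapses.
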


 \begin{proof} Given a pair $(J,[\nabla])$ we first consider the reduction $\mathcal{G}_0$ of the
first frame bundle $\mathcal{F}_M$ corresponding to the choice of $J$. Next we construct the full frame bundle
$\mathcal{G}=\cup_{u\in\mathcal{G}_0}\mathcal{G}_u$, where
 $$
\mathcal{G}_u=\{\theta(u)+\gamma^\nabla(u):\nabla\in[\nabla],
\pi_1(T_\nabla)=\pi_2(T_\nabla)=\pi_5(T_\nabla)=0\}
 $$
and $\theta=\omega_{-1}\in\Omega^1(\mathcal{G}_0,\g_{-1})$ is the soldering form,
$\gamma^\nabla=\omega_0\in\Omega^1(\mathcal{G}_0,\g_0)$ is the principal connection corresponding to
$\nabla$. The topology and the manifold structure on $\mathcal{G}$ is induced naturally (through the Weyl structures \cite{CS}
corresponding to Weyl connections $\nabla$).

By construction, $\mathcal{G}$ is equipped with $G_0$-equivariant 1-form
$\omega_{-1}+\omega_0:T_u\mathcal{G}\to\g_{-1}\oplus\g_0$. We extend it in a
$P$-equivariant way to a Cartan connection $\omega\in\Omega^1(\mathcal{G},\g)$,
however we have to fix the normalization. As usual this is done by the curvature function
$\kappa:\mathcal{G}\to\Lambda^2\g_{-}^*\ot\g$ corresponding to the curvature form
$K=d\omega+\frac12[\omega,\omega]$. Notice that grading 1 part $\kappa_1$ of the curvature, corresponding to
$d\omega_{-1}(\xi,\eta)+[\omega_{-1}(\xi),\omega_0(\eta)]+[\omega_0(\xi),\omega_{-1}(\eta)]$,
does not involve $\omega_1$, while the grading 2 part $\kappa_2$, corresponding to
$d\omega_0(\xi,\eta)+[\omega_{-1}(\xi),\omega_1(\eta)]+[\omega_0(\xi),\omega_0(\eta)]
+[\omega_1(\xi),\omega_{-1}(\eta)]$, is affine in $\omega_1$. 
We can use this fact and Lemma \ref{LtrC2} to construct $\omega_1$ by the condition $\kappa_2\in\mathfrak{C}_2$.

Indeed, the Kostant co-differential is the left-inverse of the  Lie algebra cohomology differential $\p$ (= Spencer differential $\delta$) 
at the indicated place in the complex
 $$
0\to\g_1\ot\g_{-1}^*\stackrel{\p}\longrightarrow\g_0\ot\La^2\g_{-1}^*\to\g_{-1}\ot\La^3\g_{-1}^*\to0.
 $$
The space $\op{Ker}(\p^*)$ is complementary to $\op{Im}(\p)\subset\g_0\ot\La^2\g_{-1}^*\ni\kappa_2$
and since a change of  (Weyl) connection is equivalent to a change of $\kappa_2$ by $\p\psi$, where $\psi\in\g_1\ot\g_{-1}^*$, we can achieve $\kappa_2\in\op{Ker}(\p^*)$ precisely as in the normal case, and $\omega_1$ is (canonically) fixed.

This construction of $\mathcal{G}$ and $\omega=\omega_{-1}+\omega_0+\omega_1$ is clearly
functorial implying the equivalence claim to one side.

To the other side, if we have a Cartan geometry $(\mathcal{G},\omega)$ of the type $\op{SL}(n+1,\C)_\R/P$,
then we read off $J$ from $\mathcal{G}_0$ and sections of $\mathcal{G}\to\mathcal{G}_0$
determine the class of connections $\nabla$. A change of such section is equivalent to a c-projective
change of connection as in Lemma \ref{L2}.
 \end{proof}

 \begin{rk}
It was noticed in \cite{H} that normality of the Cartan connection $\omega$
implies minimality of $\nabla$, i.e.\ $T_\nabla=T_\nabla^{--}=\frac14N_J$ or equivalently
$T_\nabla^{++}=0$, $T_\nabla^{-+}=0$.
On the other hand, for c-projective structures $(J,[\nabla])$ with minimal $\nabla$
references \cite{Y,CEMN}
provide construction of the normal Cartan connection $\omega\in\Omega^1(\mathcal{G},\g)$.
Thus the above equivalence of categories restricts to equivalence of (sub-)categories between
c-projective structures $(J,[\nabla])$ with minimal $\nabla$ and normal parabolic
geometries of type $\op{SL}(n+1,\C)_\R/P$.
 \end{rk}

\section{The general submaximal symmetry dimension.}\label{S5}

Here we derive the submaximal symmetry dimension for general c-projective structures.
In this case, we additionally have the invariant part of the torsion that we denote by 
$\kappa_\text{IV}=\pi_4(T_\nabla)\in\mathbb{T}^{-+}_\text{traceless}$.
This is the obstruction for c-projective geometry to be normal/minimal.

Flatness, i.e.\ local isomorphism to $(\C P^n,J_\text{can},[\nabla^\text{FS}])$, is characterized by:
$\kappa_\text{I}=0,$ $\kappa_\text{II}=0$, $\kappa_\text{III}=0$, $\kappa_\text{IV}=0$.
This system is $P$-invariant, as follows from the proof of Proposition \ref{P3} 
(but its second term $\kappa_\text{II}$ is no longer $P$-invariant: under $\mathfrak{p}_+$ action 
it changes by a derivative of $\kappa_\text{IV}$).

The method developed in \cite{KT} extends to this situation and we shall show that
the bound on submaximal symmetry dimension persists.

\smallskip

 \begin{Proof}{Proof of Theorem \ref{Thm1}}
If $\kappa_\text{IV}=0$, then the connection is minimal and the estimate from above on the submaximal 
symmetry dimension $\mathfrak{S}$ follows from Section \ref{S2}:
 $$
\mathfrak{S}\leq\mathfrak{U}:=\max\{\dim(\fa^\phi)|
0\neq\phi\in\mathbb{V}_\text{I}\oplus\mathbb{V}_\text{II}\oplus\mathbb{V}_\text{III}\}.
 $$

Assume now that $\kappa_\text{IV}$ is a non-zero element in $\mathbb{T}^{-+}_\text{traceless}$.
In this proof this module will be considered as a completely reducible $P$-submodule of
$\La^2(\g/\mathfrak{p})^*\otimes(\g/\mathfrak{p})=\mathbb{U}/(\mathbb{U}_2\oplus\mathbb{U}_3)$, 
so that the $P$-action reduces to the $G_0$-action ($\mathfrak{p}_+$ acts trivially).

Let us notice that normality condition is not crucial for the universal upper bound on
the symmetry dimension in \cite[Section 2.4]{KT}. The essential step is the reduction of $\g_0$ to the
annihilator of the (harmonic) curvature and its Tanaka prolongation, so it can be generalized 
(see also \cite[Theorem~2]{K$_5$}).
Thus, replacing the harmonic curvature with $\kappa_\text{IV}$, this leads to the submaximal symmetry dimension 
$\hat{\mathfrak{S}}$ for general c-projective structures:
 $$
\hat{\mathfrak{S}}\leq\hat{\mathfrak{U}}:=\max\bigl\{\mathfrak{U},\max\{\dim(\fa^\phi)|
0\neq\phi\in\mathbb{T}^{-+}_\text{traceless}\}\bigr\}.
 $$

The complexification of the module $\mathbb{T}^{-+}_\text{traceless}$ is $\mathbb{W}\oplus\overline{\mathbb{W}}$, where 
the lowest weight vector $\phi_0\in\mathbb{W}$ is $e_{\a_1}\we e_{\overline{\a}_1}\ot e_{-\a_1-\dots-\a_n}$. 
The annihilator of $\phi_0+\overline{\phi_0}$ in $\g_0$ is equal to
 \[
\mathfrak{a}_0=\left(\begin{array}{c|cccccc}
 a_0 & 0 & 0 & 0 & \cdots & 0 & 0\\ \hline
 0 & a_1 & 0 & 0 & \cdots & 0 & 0\\
 0 & * & a_2 & * & \cdots & * & 0\\
 0 & * & * & a_3 & \cdots & * & 0\\
 \vdots &  \vdots & \vdots & \vdots & \ddots & \vdots & 0\\
 0 & * & * & * & \cdots & a_{n-1} & 0\\
 0 & * & * & * & \cdots & * & a_n\\
\end{array} \right)
\qquad\quad\begin{matrix}{}\\\\
a_0+\dots+a_n=0\\  {}\\ {}\\ a_1+\overline{a}_1=\overline{a}_0+a_n {}\\ {}\\ {}\end{matrix}
 \]
and so $\dim\mathfrak{a}_0=2(n-1)^2+2$.

In Section \ref{S2}, Propositions \ref{P:lw-vec} and \ref{P:PR} can be applied to $\mathbb{T}^{-+}_\text{traceless}$, i.e.\ 
the normality condition is not essential.  Hence, we have the prolongation rigidity phenomenon, and 
$\mathfrak{a}^{\phi_0}=\g_-\oplus\mathfrak{a}_0$.

Thus the symmetry dimension does not exceed $\dim\mathfrak{a}^{\phi_0}=2n^2-2n+4$.
Since this does not exceed the maximal bound $\mathfrak{U}$ of the three pure curvature types in
Theorem \ref{Thm2}, the conclusion of Theorem \ref{Thm1} follows. \qed
  \end{Proof}

\smallskip

If we assume $N_J=0$, and so eliminate the spontaneous growth of submaximal dimension for $n=3$ (with the winning type III), 
then the submaximal dimension is $2n^2-2n+4$ for all $n\ge2$. This bound persists even in the non-minimal case:

 \begin{prop}
For the general c-projective structure with $\kappa_\textrm{IV}\not\equiv0$
the symmetry dimension does not exceed $2n^2-2n+4$. This bound is sharp.
 \end{prop}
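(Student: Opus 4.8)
The plan is to read off the upper bound from the $\kappa_\text{IV}\neq0$ analysis already contained in the proof of Theorem \ref{Thm1}, and then to establish sharpness by the abstract deformation-plus-extension-functor construction of Section \ref{S2}, now applied to the torsion module $\mathbb{T}^{-+}_\text{traceless}$ under the modified normalization of Section \ref{S4}. For the bound, note that when $\kappa_\text{IV}\neq0$ the isotropy of any symmetry algebra must annihilate $\kappa_\text{IV}$, so the symmetry dimension is at most $\max\{\dim\fa^\phi:0\neq\phi\in\mathbb{T}^{-+}_\text{traceless}\}$. Since Propositions \ref{P:lw-vec} and \ref{P:PR} apply verbatim to $\mathbb{T}^{-+}_\text{traceless}$ (normality was never used there), prolongation rigidity holds and this maximum equals $\dim\fa^{\phi_0}=2n+\dim\fa_0$, realized at the lowest weight vector $\phi_0=e_{\a_1}\we e_{\overline{\a}_1}\ot e_{-\a_1-\dd-\a_n}$. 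With $\dim\fa_0=2(n-1)^2+2$ as recorded in the proof of Theorem \ref{Thm1}, this yields the bound $2n^2-2n+4$.

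For sharpness I will realize this bound abstractly, exactly as in Section \ref{S2}. Put $\psi=\phi_0+\overline{\phi_0}$ and take $\ff=\g_-\oplus\fa_0$ as a vector space, with deformed bracket $[\cdot,\cdot]_\ff=[\cdot,\cdot]_{\fa}-\psi(\cdot,\cdot)$; this is $\fa_0$-equivariant because $\fa_0$ annihilates $\psi$, and well defined because $\psi$ has image in $\g_{-1}\subset\g_-$. By \eqref{E:Jac} the Jacobi identity reduces to showing $\psi(\psi(\cdot,\cdot),\cdot)=0$, and this disjointness of the outputs from the inputs is the only real obstacle. It is precisely where the present case differs from the type III exception: reading off $\phi_0$ and $\overline{\phi_0}$, the inputs of $\psi$ lie along the root vectors $e_{-\a_1}$ and $e_{-\overline{\a}_1}$, while its outputs lie along $e_{-\a_1-\dd-\a_n}$ and $e_{-\overline{\a}_1-\dd-\overline{\a}_n}$. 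For every $n\ge2$ the root $-\a_1-\dd-\a_n$ is neither $-\a_1$ nor, lying in the opposite $\mathfrak{sl}(n+1,\C)$ factor, $-\overline{\a}_1$, and symmetrically for the conjugate output; hence the outputs never feed back as inputs, $\psi(\psi(\cdot,\cdot),\cdot)=0$, and $\ff$ is a Lie algebra. (Contrast type III at $n=2$, where a $\phi_0$-output coincided with an $\overline{\phi_0}$-input and Jacobi failed, forcing the separate treatment in Appendix \ref{S.A}.)

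It then remains to globalize. The quotient $\ff/\fa_0$ integrates to a local homogeneous space $M=F/A_0$, and the extension functor argument of Section \ref{S2} (taking $\varphi=\op{id}_\ff$) produces a Cartan geometry of type $\op{SL}(n+1,\C)_\R/P$ whose full curvature is $\psi$. Because $\psi\in\mathbb{T}^{-+}_\text{traceless}\subset\mathfrak{C}_1\subset\mathfrak{C}$, this geometry satisfies the normalization of Section \ref{S4} and therefore underlies a c-projective structure $(M,J,[\nabla])$ with $\kappa_\text{IV}\neq0$. Its symmetry algebra is $\ff$, of dimension $2n+\dim\fa_0=2n^2-2n+4$, matching the upper bound; so the estimate is sharp for all $n\ge2$.
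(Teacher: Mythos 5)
Your proof is correct, and while the upper-bound half coincides with the paper's (both defer to the $\kappa_\text{IV}$-analysis in the proof of Theorem \ref{Thm1}, i.e.\ prolongation rigidity via Propositions \ref{P:lw-vec} and \ref{P:PR} applied to $\mathbb{T}^{-+}_\text{traceless}$ and the count $2n+\dim\fa_0=2n^2-2n+4$), your sharpness argument takes a genuinely different route. The paper exhibits an explicit coordinate model: the complex connection on $\C^n$ with $\Gamma^2_{\bar11}=\Gamma^{\bar2}_{1\bar1}=1$ and all other Christoffel symbols zero, whose curvature vanishes, whose torsion lies purely in $\mathbb{T}^{-+}_\text{traceless}$, and whose symmetries are listed explicitly ($\p_{z^i}$, $z^i\p_{z^j}$ for $i\ne2,j\ne1$, and $z^1\p_{z^1}+z^2\p_{z^2}+\overline{z^2}\p_{\overline{z^2}}$, totalling $2n^2-2n+4$). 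You instead extend the abstract deformation/extension-functor technique of Section \ref{S2} to the torsion module: your key observation, that the inputs of $\psi=\phi_0+\overline{\phi_0}$ lie along $e_{-\a_1},e_{-\overline{\a}_1}$ while the outputs lie along $e_{-\a_1-\dd-\a_n},e_{-\overline{\a}_1-\dd-\overline{\a}_n}$, so that \eqref{E:Jac} holds for all $n\ge2$, is sound (note that since $\g_-$ is abelian here, the reduction of Jacobi to \eqref{E:Jac} needs no cocycle condition on $\psi$), and your replacement of normality by the modified normalization $\psi\in\mathbb{T}^{-+}_\text{traceless}\subset\mathfrak{C}_1\subset\mathfrak{C}$ together with the equivalence of categories of Section \ref{S4} is exactly the right way to pass from the constructed Cartan geometry back to a c-projective structure with $\kappa_\text{IV}\neq0$. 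What each approach buys: the paper's model is completely concrete (it is later matched with the coordinate models of Section \ref{S3} and is directly verifiable), whereas your argument explains structurally why realizability cannot fail here, in pleasing contrast to the type III, $n=2$ exception, and it shows the Section \ref{S2} machinery survives outside the normal setting; its price is that it produces only a local homogeneous model with no coordinates, and it leans on the Section \ref{S4} theorem as an essential ingredient rather than on a direct computation.
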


 \begin{proof}
The upper bound follows from the above proof, so we just need to prove realizability, i.e.\ to
construct {\it a model\/}.

We take $J=i$ and use the complex notations. Take $\Gamma^2_{\bar11}=\Gamma^{\bar2}_{1\bar1}=1$ and
all other Christoffel symbols zero (in particular, $\Gamma^2_{1\bar1}=\Gamma^{\bar2}_{\bar11}=0$).
Its torsion $T_\nabla=dz^1\we d\overline{z^1}\ot(\p_{\overline{z^2}}-\p_{z^2})$ has only
$\mathbb{T}^{-+}_\text{traceless}$-component non-zero and its curvature $R_\nabla$ vanishes. 

The c-projective symmetries are the real and imaginary parts of the following (linearly independent)
complex-valued vector fields:
 $$
\p_{z^i},\quad z^i\p_{z^j}\ (i\ne2,j\ne1),\quad z^1\p_{z^1}+z^2\p_{z^2}+\overline{z^2}\p_{\overline{z^2}}.
 $$
The totality of these is $2n^2-2n+4$ as required.
 \end{proof}

\section{C-projective structures: the metric case.}\label{S6}

The goal of this and the next section is to prove Theorem \ref{Thm3}.
In this section we recall the necessary background on metric c-projective structures and derive a useful estimate
involving the degree of mobility; then in the next we give the proof and further specifications.

Two pseudo-K\"ahler metrics $g$ and $\tilde g$ underlying the same complex structure $J$ are
called c-projectively equivalent if their Levi-Civita connections
$\nabla=\nabla^g,\tilde\nabla=\nabla^{\ti g}$ are.

This can be expressed \cite{DM,MS} through the $(1,1)$-tensor
 $$
A={\tilde g}^{-1}g\cdot\left|\frac{\det(\tilde g)}{\det(g)}\right|^{1/2(n+1)}:TM\to TM,
 $$
where ${\tilde g}^{-1}$ is the inverse of ${\tilde g}$ ($\tilde g^{ik}\bar g_{kj}=\delta^i_j$),
and ${\tilde g}^{-1}g$ is the contraction (${\tilde g}^{ik}g_{kj}$):
The metrics $g$ and $\tilde g$ are c-projectively equivalent iff

 \begin{equation*}
(\nabla_XA)Y=g(X,Y)v_A+Y(\t_A)X+\oo(X,Y)Jv_A-JY(\t_A)JX, 
 \end{equation*}
where $\oo(X,Y)=g(JX,Y)$, $\t_A=\frac14\op{tr}(A)$, $v_A=\op{grad}_g\t_A$.
 \com{
We can also write it so
 \begin{equation}\label{Me2}
(\nabla_Z\hat A)(X,Y)=g(X,Z)\l_A(Y)+g(Y,Z)\l_A(X)+\oo(X,Z)\l_A(JY)+\oo(Y,Z)\l_A(JX), 
 \end{equation}
where $\hat A(X,Y)=g(AX,Y)$, and $\l_A=\frac14d\op{tr}(A)$ ..
 }
In argument-free form\footnote{The same equation serves as a definition of
a Hamiltonian 2-form $\omega(X,AY)$ corresponding to the endomorphism $A$, which attracted a recent interest \cite{ACG}. }
 this writes (using symmetrizer by the last two arguments) so:
 \begin{equation}\label{eqA}
\nabla\hat A=2\op{Sym}_{2,3}\bigl(g\ot\l_A-\oo\ot J^*\l_A\bigr),
 \end{equation}
where $\hat A(X,Y)=g(AX,Y)$ and $\l_A=d\t_A$.

This linear overdetermined PDE system on the unknown $A$ has a finite-dimensional solution space
denoted $\op{Sol}(g,J)$, and $\op{Id}\in\op{Sol}(g,J)$.
{\it Degree of mobility\/} of the pair $(g,J)$ is defined as $D(g,J)=\dim\op{Sol}(g,J)$.

Let us denote by  $\mathfrak{i}(g,J)$ the algebra of $J$-holomorphic infinitesimal isometries of $g$,
by $\mathfrak{h}(g,J)$ the algebra of $J$-holomorphic vector fields that are homotheties for $g$.
We will need the following estimate:
 \begin{lem}\label{L3}
For any pseudo-K\"ahler structure $(g,J)$ we have the inequality: $\dim\mathfrak{cp}(\nabla^g,J) \le\dim\mathfrak{h}(g,J)+D(g,J)-1$.
 \end{lem}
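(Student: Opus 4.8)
The plan is to bound $\dim\mathfrak{cp}(\nabla^g,J)$ by exhibiting a suitable linear map from the c-projective symmetry algebra whose kernel and image can each be controlled, one by the homothety algebra $\mathfrak h(g,J)$ and the other by the solution space $\op{Sol}(g,J)$. The starting observation is that a c-projective symmetry $v$, being a $J$-holomorphic vector field whose flow preserves $[\nabla^g]$, acts naturally on the finite-dimensional space $\op{Sol}(g,J)$ via the Lie derivative: since equation \eqref{eqA} is c-projectively invariant (its solution space depends only on the c-projective class and the metric $g$ up to the relevant normalization), the flow $\Phi^v_t$ sends solutions to solutions, and differentiating gives a linear representation $\rho\colon\mathfrak{cp}(\nabla^g,J)\to\mathfrak{gl}(\op{Sol}(g,J))$.

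First I would make precise how $v\in\mathfrak{cp}(\nabla^g,J)$ acts on $\op{Sol}(g,J)$; the key point is that $\op{Sol}(g,J)$ is canonically attached to the c-projective structure together with the chosen metric, so the image $\rho(v)$ records only the ``leading'' effect of $v$ on the pencil of compatible metrics. Next I would analyze the kernel $\ker\rho$: these are the c-projective symmetries acting trivially on all of $\op{Sol}(g,J)$, in particular fixing $\op{Id}\in\op{Sol}(g,J)$, which should force $v$ to preserve $g$ up to scale, i.e.\ to be a $J$-holomorphic homothety. This would give an embedding $\ker\rho\hookrightarrow\mathfrak h(g,J)$, hence $\dim\ker\rho\le\dim\mathfrak h(g,J)$.

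For the image, the representation $\rho$ factors so that its image sits inside a space of dimension at most $D(g,J)-1$: the point is that $\op{Id}$ is always a fixed (c-projectively invariant) solution, so the genuine action takes place on the $(D(g,J)-1)$-dimensional complement, or more precisely the trace/normalization built into \eqref{eqA} (through $\t_A=\tfrac14\op{tr}(A)$ and $\l_A=d\t_A$) pins down one direction and leaves an image of dimension at most $D(g,J)-1$. Combining the rank--nullity identity
\[
\dim\mathfrak{cp}(\nabla^g,J)=\dim\ker\rho+\dim\op{im}\rho\le\dim\mathfrak h(g,J)+\bigl(D(g,J)-1\bigr)
\]
then yields the claimed inequality.

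The main obstacle I expect is the careful verification of the two boundary claims: that $\ker\rho$ really lands in the homotheties rather than merely the affine symmetries, and that the image loses exactly one dimension rather than zero. The first requires showing that a c-projective symmetry fixing every element of $\op{Sol}(g,J)$ cannot deform $g$ within its c-projective class except by scaling — this is where one must use that $v$ fixes $\op{Id}$ and preserves the defining tensorial equation \eqref{eqA}, so that $L_v g$ is itself forced to be (a multiple of) a solution fixed by $v$, hence proportional to $g$. The second, the ``$-1$'', is essentially the statement that the trivial solution $\op{Id}$ spans a $\mathfrak{cp}$-invariant line, so $\rho$ descends to $\op{Sol}(g,J)/\langle\op{Id}\rangle$; making this rigorous is the delicate bookkeeping step, but it is where the sharp constant in the lemma comes from.
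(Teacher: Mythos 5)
There is a genuine gap, and it sits exactly where the ``$-1$'' is supposed to come from. Your $\rho$ is a Lie algebra representation into $\mathfrak{gl}(\op{Sol}(g,J))$, so even granting that $\R\cdot\op{Id}$ is an invariant line and that $\rho$ descends to the quotient $\op{Sol}(g,J)/\R\cdot\op{Id}$, rank--nullity applied to $\rho$ only gives $\dim\mathfrak{cp}(\nabla^g,J)\le\dim\ker\rho+(D(g,J)-1)^2$: the image lies in the \emph{endomorphisms} of a $(D-1)$-dimensional space, which is a space of dimension $(D-1)^2$, not $D-1$. Acting on a $(D-1)$-dimensional space is not the same as having image of dimension at most $D-1$, and no amount of quotienting the module being acted on can produce the linear bound of the lemma. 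There is a second problem: if $\rho$ is literally the Lie derivative on $(1,1)$-tensors, then $\rho(v)\op{Id}=\mathcal{L}_v\op{Id}=0$ for \emph{every} vector field, so ``$v$ fixes $\op{Id}$'' carries no information and your kernel argument ($\ker\rho\hookrightarrow\mathfrak{h}(g,J)$) has no starting point; moreover $\op{Sol}(g,J)$ is not even invariant under the naive Lie derivative, because equation \eqref{eqA} is built from $g$ itself (through $\hat A$, $\oo$ and $\nabla^g$), not only from the c-projective class, so the asserted well-definedness of $\rho$ also fails as stated.

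What repairs the argument --- and is the paper's actual proof --- is to replace the representation by a \emph{linear map into} $\op{Sol}(g,J)$, namely the differentiated orbit map at the base point $\op{Id}$ of the genuine (twisted) action of the flow on metrics in the c-projective class:
$$
\phi(v)=g^{-1}\mathcal{L}_v(g)-\tfrac1{2(n+1)}\op{tr}\bigl(g^{-1}\mathcal{L}_v(g)\bigr)\op{Id}.
$$
The nontrivial input --- which you invoke implicitly when you say ``$L_vg$ is itself forced to be a solution'' --- is that $\phi(v)\in\op{Sol}(g,J)$ for every c-projective field $v$; this is the result of \cite{MR$_1$} cited in the paper, not a consequence of invariance of \eqref{eqA} alone, and it is the central fact that needs to be quoted or proved. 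Once it is in place, $\ker\phi=\mathfrak{i}(g,J)$, and composing with $\pi:\op{Sol}(g,J)\to\op{Sol}(g,J)/\R\cdot\op{Id}$ one checks $\ker(\pi\circ\phi)=\mathfrak{h}(g,J)$ (a homothety produces a constant multiple of $\op{Id}$, and conversely $\phi(v)\in\R\cdot\op{Id}$ forces the conformal factor to be constant). Rank--nullity applied to $\pi\circ\phi$ --- a linear map into a target of dimension $D(g,J)-1$ --- then gives exactly $\dim\mathfrak{cp}(\nabla^g,J)\le\dim\mathfrak{h}(g,J)+D(g,J)-1$. So your overall strategy (kernel controlled by homotheties, image controlled by $D-1$, rank--nullity) is the right one, but it must be run on the evaluation map $\phi$, not on a representation $\rho$.
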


This was discussed in \cite{MR$_2$}, but not formally stated. Though that paper was devoted only to the
K\"ahler metrics the statement is true in general and the proof persists. Let us give a brief argument.
The formula
 $$
A=\phi(v)=g^{-1}\mathcal{L}_v(g)-\tfrac1{2(n+1)}\op{tr}(g^{-1}\mathcal{L}_v(g))\op{Id}
 $$
by \cite{MR$_1$} defines the map
 $$
\phi:\mathfrak{cp}(\nabla^g,J)\to\op{Sol}(g,J)
 $$
and $\op{Ker}(\phi)=\mathfrak{i}(g,J)$. Moreover, if $\pi:\op{Sol}(g,J)\to\op{Sol}(g,J)/\R\cdot\op{Id}$ is the natural projection, then $\op{Ker}(\pi\circ\phi)=\mathfrak{h}(g,J)$. The claim follows from the rank theorem. \qed

 \begin{cor}
We have: $\dim\mathfrak{cp}(\nabla^g,J) \le \dim\mathfrak{i}(g,J)+D(g,J)$. \qed
 \end{cor}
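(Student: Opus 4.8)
The plan is to deduce this corollary directly from Lemma \ref{L3}, combined with the elementary observation that the $J$-holomorphic homotheties exceed the $J$-holomorphic isometries by at most one dimension. So the whole proof reduces to a single dimension count relating $\mathfrak{h}(g,J)$ and $\mathfrak{i}(g,J)$, followed by substitution.

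First I would recall that a homothetic vector field $v\in\mathfrak{h}(g,J)$ satisfies $\mathcal{L}_v g=c\,g$ for some \emph{constant} $c\in\R$; the constancy of $c$ is precisely the defining feature that distinguishes a homothety from a merely conformal field. Assigning to each such $v$ its homothety factor $c$ then defines a linear map $\mathfrak{h}(g,J)\to\R$, whose kernel consists exactly of the fields with $\mathcal{L}_v g=0$, i.e.\ the infinitesimal isometries $\mathfrak{i}(g,J)$. By the rank theorem the image is at most one-dimensional, so
$$
\dim\mathfrak{h}(g,J)\le\dim\mathfrak{i}(g,J)+1.
$$

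Substituting this into the estimate of Lemma \ref{L3}, namely $\dim\mathfrak{cp}(\nabla^g,J)\le\dim\mathfrak{h}(g,J)+D(g,J)-1$, the two units cancel and I obtain
$$
\dim\mathfrak{cp}(\nabla^g,J)\le\dim\mathfrak{i}(g,J)+D(g,J),
$$
which is the claimed inequality.

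There is no substantive obstacle in this argument: the only point meriting any care is the standard fact that the homothety factor of a homothetic field is a genuine constant rather than a function of the point, which is what guarantees that $v\mapsto c$ takes values in $\R$ (so that its image is at most one-dimensional) and has $\mathfrak{i}(g,J)$ as kernel. Everything beyond that is linear algebra together with the already-established Lemma \ref{L3}.
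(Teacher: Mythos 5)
Your proof is correct: the homothety factor map $v\mapsto c$ (where $\mathcal{L}_v g=c\,g$) is linear with kernel $\mathfrak{i}(g,J)$ and at most one-dimensional image, so $\dim\mathfrak{h}(g,J)\le\dim\mathfrak{i}(g,J)+1$, and substituting into Lemma \ref{L3} gives the bound. The paper's (implicit) argument is slightly different in route, though equally short: rather than using the statement of Lemma \ref{L3} as a black box, it reuses the map $\phi:\mathfrak{cp}(\nabla^g,J)\to\op{Sol}(g,J)$ constructed in the proof of that lemma, whose kernel is exactly $\mathfrak{i}(g,J)$; the rank theorem applied to $\phi$ itself (rather than to $\pi\circ\phi$, which is what yields the lemma) gives $\dim\mathfrak{cp}(\nabla^g,J)\le\dim\mathfrak{i}(g,J)+\dim\op{Sol}(g,J)$ directly, with no mention of homotheties. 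The two routes are mirror images of one another: your inequality $\dim\mathfrak{h}(g,J)\le\dim\mathfrak{i}(g,J)+1$ corresponds precisely to the fact that $\op{Ker}(\pi\circ\phi)/\op{Ker}(\phi)$ injects into the line $\R\cdot\op{Id}$ that $\pi$ quotients out, so the two "$+1$'s" cancel either way. What your version buys is independence from the internals of the lemma's proof; what the paper's buys is the observation that the corollary needs only the coarser kernel computation $\op{Ker}(\phi)=\mathfrak{i}(g,J)$ and not the homothety algebra at all.
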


By \cite{DM,MS} the degree of mobility is bounded so:
 $$
D(g,J)\le (n+1)^2,
 $$
and the equality corresponds to spaces of constant holomorphic sectional curvature.
The next biggest dimension, under the additional assumption that there is a
projective non-affine symmetry \cite{MR$_2$,Mi}, is equal to
 \begin{equation}\label{subMaxD}
D_{\text{sub.max}}=(n-1)^2+1=n^2-2n+2.
 \end{equation}

Another ingredient in our count is the estimate on the dimension of the isometry algebra of a K\"ahler structure. Clearly the maximal dimension is $\op{max}\bigl[\dim\mathfrak{i}(g,J)\bigr]=n^2+2n$.

 \begin{prop}\label{P4}
For a K\"ahler structure $g$ of non-constant holomorphic curvature we have: $\dim\mathfrak{i}(g,J)\le n^2+2$.
The bound is sharp and attained, for example, for $M=\C\mathbb{P}^{n-1}\times\C\mathbb{P}^1$.
 \end{prop}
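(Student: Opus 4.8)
The plan is to combine the orbit--stabilizer decomposition of $\mathfrak{i}(g,J)$ with the fact that the isotropy representation preserves the curvature. Since the holomorphic sectional curvature is non-constant, there is a point $x$ at which the curvature tensor $R_x$ is \emph{not} of constant-holomorphic-sectional-curvature (space-form) type; I work at such an $x$. Writing $\mathfrak{i}_x$ for the isotropy subalgebra (the kernel of evaluation at $x$), we have
\[
\dim\mathfrak{i}(g,J)=\dim\bigl(\mathfrak{i}(g,J)\cdot x\bigr)+\dim\mathfrak{i}_x\le 2n+\dim\mathfrak{i}_x,
\]
the evaluation image lying in $T_xM$ of real dimension $2n$. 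The isotropy representation $\mathfrak{i}_x\hookrightarrow\mathfrak{gl}(T_xM)$ is injective (a Killing field whose $1$-jet vanishes at $x$ is zero), and since its elements preserve $g$ and $J$ it lands in $\mathfrak{u}(T_xM,g,J)\cong\mathfrak{u}(n)$. So the whole problem reduces to bounding $\dim\mathfrak{i}_x$ inside $\mathfrak{u}(n)$.

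First I would use that $\mathfrak{i}_x$ preserves $R_x$, hence the holomorphic sectional curvature function $H\colon[Z]\mapsto R_x(Z,JZ,Z,JZ)/|Z|^4$ on the projectivized tangent space $\mathbb{P}(T_xM)\cong\C P^{n-1}$. Were $\mathfrak{i}_x$ to act transitively on $\C P^{n-1}$, then $H$ would be constant; but a Kähler curvature tensor is determined by its holomorphic sectional curvatures, so $R_x$ would be of space-form type, contradicting the choice of $x$. Hence $\mathfrak{i}_x$ acts on $\C P^{n-1}$ \emph{non-transitively}.

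It then remains to show that a subalgebra $\mathfrak{k}\subseteq\mathfrak{u}(n)$ acting non-transitively on $\C P^{n-1}$ satisfies $\dim\mathfrak{k}\le(n-1)^2+1$. If $\mathfrak{k}$ preserves a proper complex subspace $W$, unitarity forces $\mathfrak{k}\subseteq\mathfrak{u}(W)\oplus\mathfrak{u}(W^\perp)$, of dimension $k^2+(n-k)^2\le(n-1)^2+1$. For the irreducible case one invokes the Montgomery--Samelson--Borel classification of compact connected groups acting transitively on spheres (equivalently on $\C P^{n-1}$): its unitary entries are $\mathfrak{su}(n)$, $\mathfrak{u}(n)$ and $\mathfrak{sp}(m)\,(\oplus\,\R)$ for $n=2m$, and a direct dimension check then shows that every remaining (non-transitive) subalgebra has $\dim\le(n-1)^2+1$. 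Combining the two cases gives $\dim\mathfrak{i}_x\le(n-1)^2+1$, whence $\dim\mathfrak{i}(g,J)\le 2n+(n-1)^2+1=n^2+2$. I expect this last step to be the main obstacle: the large ``exotic'' subalgebras such as $\mathfrak{sp}(m)$ and $\mathfrak{sp}(m)\oplus\R$ are \emph{not} contained in $\mathfrak{su}(n)$ and can reach or exceed $(n-1)^2+1$ in dimension, so they cannot be eliminated by a naive subalgebra-dimension count; they must instead be ruled out via the transitivity/curvature argument, since they act transitively on $\C P^{n-1}$ and thus force constant holomorphic sectional curvature.

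Finally, for sharpness I would take $M=\C P^{n-1}\times\C P^1$ with the product Fubini--Study metric. All isometries of each factor are holomorphic, so $\mathfrak{i}(g,J)=\mathfrak{su}(n)\oplus\mathfrak{su}(2)$, of dimension $(n^2-1)+3=n^2+2$; and as a nontrivial Riemannian product its holomorphic sectional curvature is non-constant. Here the action is transitive (the evaluation image is all of $T_xM$) and the isotropy has the extremal dimension $(n-1)^2+1$, so every estimate above is an equality.
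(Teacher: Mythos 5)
Your proof is correct, and while it shares the paper's skeleton --- the bound $\dim\mathfrak{i}(g,J)\le 2n+\dim\mathfrak{i}_x$ coming from $1$-jet determinacy of Killing fields, an isotropy bound $\dim\mathfrak{i}_x\le(n-1)^2+1$ inside $\mathfrak{u}(n)$, and sharpness on $\C P^{n-1}\times\C P^1$ --- it handles the key step by a genuinely different, and more careful, argument. The paper's proof of the isotropy bound is the bare assertion that all maximal proper subalgebras of $\mathfrak{u}(n)$ are $\mathfrak{u}(k)\oplus\mathfrak{u}(n-k)$; that assertion is false as stated, since $\mathfrak{u}(n)$ also has maximal \emph{irreducible} subalgebras, e.g.\ $\mathfrak{so}(n)\oplus\R$ and, for $n=2m$, $\mathfrak{sp}(m)\oplus\R$, and at $n=4$ the purely algebraic count even fails: $\dim(\mathfrak{sp}(2)\oplus\R)=11>10=(n-1)^2+1$. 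Your extra step --- working at a point $x$ where $R_x$ is not of space-form type and using invariance of the holomorphic sectional curvature function on $\mathbb{P}(T_xM)$ to conclude that $\mathfrak{i}_x$ cannot act transitively on $\C P^{n-1}$ --- is exactly what excludes these subalgebras, because $Sp(m)$ and $Sp(m)\cdot U(1)$ do act transitively on $\C P^{2m-1}$; this is precisely the content missing from the paper's one-line argument, so your route buys an argument that is actually complete (visibly so at $n=4$), at the modest cost of invoking the Montgomery--Samelson--Borel classification of transitive sphere actions. Two small corrections to your write-up: first, your parenthetical claim that $\mathfrak{sp}(m)$ is not contained in $\mathfrak{su}(n)$ is off (in fact $Sp(m)\subset SU(2m)$); what matters, and what your argument actually uses, is that it lies in no $\mathfrak{u}(k)\oplus\mathfrak{u}(n-k)$, i.e.\ it acts irreducibly. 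Second, the ``direct dimension check'' in the irreducible non-transitive case is doing real work: it quantifies over all irreducible representations of compact semisimple algebras, and one should note that once the transitive entries are removed, every remaining irreducible subalgebra (orthogonal, tensor-product type, or a simple algebra in a non-minimal representation) has dimension well below $(n-1)^2+1$, so the check does go through.
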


 \begin{proof}
The isotropy algebra of the symmetry algebra is a proper subalgebra of $\mathfrak{u}(n)$, and so is reductive.
All maximal proper subalgebras are $\mathfrak{u}(k)\oplus\mathfrak{u}(n-k)$, and it is clear that the maximal dimension
is attained for $k=1$ or $k=n-1$. Since the Killing vector field is 1-jet determined, we conclude that the sub-maximal
isometry dimension is $2n+(n-1)^2+1=\dim SU(n)+\dim SU(2)=n^2+2$.
 \end{proof}

Now the required bound for $\dim\mathfrak{cp}(\nabla^g,J)$ in the K\"ahler case follows from the Proposition \ref{P4}, Lemma \ref{L3},
the well-known fact that $\mathfrak{h}(g,J)=\mathfrak{i}(g,J)$ if the isometry algebra acts with an open orbit\footnote{Indeed,
if $\vp^*g=\lambda\cdot g$ for a homothety $\vp$ and $x$ is a fixed point with non-zero Riemann curvature tensor $R$, then equality
$\vp^*\|R\|^2=\lambda^{-2}\|R\|^2$ at $x$ implies $\lambda=1$.} 
and formula~(\ref{subMaxD}). However since the latter estimate has an additional assumption \cite{MR$_2$}, we will give  in the next section
another proof in the case there exists no essential projective symmetry (that is a non-affine symmetry for any choice of $g$
in the c-projective class).

\section{Submaximal metric c-projective structures}\label{S7}

By Corollary \ref{Cor} the algebra of c-projective symmetries of a K\"ahler metric is bounded in dimension by $2(n^2-2n+2)$.
The next example shows that this bound is realizable by a metric c-projective structure. Indeed,
consider the following pseudo-K\"ahler metric on $\C^n$ ($J=i$):
 \begin{equation}\label{subMKh}
g=|z_1|^2\,dz_1\,d\bar{z_1}+dz_1\,d\bar{z_2}+d\bar{z_1}\,dz_2
+\sum_{k=3}^n\epsilon_k\,dz_k\,d\bar{z_k}
 \end{equation}
($\epsilon_k=\pm1$).
One easily checks that its Levi-Civita connection coincides with the connection $\nabla$ of type II
given by formula (\ref{subCmax}), and so the sub-maximal complex (integrable $J$) projective
structure is metrizable; in addition by varying the signs $\epsilon_k$ we can achieve any
indefinite signature $(2p,2n-2p)$ for the pseudo-K\"ahler metric, $0<p<n$.

To finish the proof of Theorem \ref{Thm3} we have to show that no K\"ahler metric can have more than $2n^2-2n+3$
linearly independent c-projective symmetries unless it is c-projectively flat (i.e.\ has constant holomorphic sectional curvature).
So we let $g$ be K\"ahler till the end of the proof.
In the case there exists an essential c-projective symmetry (for at least one $g$ with $\nabla^g\in[\nabla]$) the claim follows
from the estimates of Section~\ref{S6}.

Thus let us assume that for a K\"ahler metric $g$ the algebra of c-projective
symmetries coincides with the algebra of (infinitesimal) symmetries of the pair $(\nabla^g,J)$:
$\mathfrak{cp}(\nabla^g,J)=\mathfrak{aff}(g,J)$.

Fix a point $x\in M$ at which the curvature tensor $R$ does not vanish, and consider the
holonomy algebra $\mathcal{H}_x$ of $\nabla^g$ at $x$.
Since $\nabla^g$ preserves both $g$ and $J$, we have $\mathcal{H}_x\subset\mathfrak{u}(n)$.
By Ambrose-Singer theorem $\mathcal{H}_x$ contains the endomorphisms $R(v\wedge w)$ for $v,w\in T_xM$,
so $\mathcal{H}_x\neq0$.

By the (infinitesimal version of) de Rham decomposition theorem \cite{Ei},
we can split $T_xM=\oplus_{k=0}^m\Pi_k$, where $\Pi_0$ is the subspace of complex
dimension $r_0<n$ where $R$ vanishes, and the other pieces are irreducible with respect to $\mathcal{H}_x$
(all $\Pi_k$ are $J$-invariant and so have even real dimensions $2r_k$).
Any K\"ahler metric, which is equivalent to $g$ via a complex affine transformation, is obtained from it
by a block-diagonal automorphism $\op{diag}(A_0,c_1,\dots,c_m)$,
where $A_0\in\op{GL}(\Pi_0,J)$ and $c_k\neq0$ are constant multiples of the identity in the corresponding block.

Therefore the isotropy $\tilde{\mathfrak{a}}_0(x)$ of the complex affine symmetry algebra at $x$ consists of block-diagonal endomorphisms
 $$
\op{diag}(\vp_0,\vp_1,\dots,\vp_m)\in\tilde{\mathfrak{a}}_0(x)\subset\mathfrak{aff}(g,J),
 $$
where $\vp_0\in\mathfrak{gl}(\Pi_0,J)$ is a $\C$-linear matrix of complex size $r_0$
and $\vp_k\in\mathfrak{u}(\Pi_k,g,J)+\R\cdot\op{Id}$ is generated by a unitary transformation of the $k$-th block
of complex size $r_k$ and the standard homothety (scaling of the metric $g$). Consequently we obtain
 $$
\dim\tilde{\mathfrak{a}}_0(x)\leq 2r_0^2+\sum_{k=1}^m(r_k^2+1)\leq 2(n-1)^2+2,
 $$
with equality iff the de Rham decomposition is $(n-1)\times1$ complex block and the smaller block is $\mathfrak{gl}(1,\C)=\mathfrak{u}(1)+\R$
(with a homothety). Next, the upper bound $2n+\tilde{\mathfrak{a}}_0(x)$ on the symmetry algebra $\mathfrak{aff}(g,J)$ is sharp only if the
symmetry acts transitively. As recalled at the end of the last section, a Riemannian metric with an open orbit of the isometry group has no 
essential homotheties. Therefore we get the required estimate
 $$
\dim\mathfrak{aff}(g,J)\leq 2n+ 2(n-1)^2+1=2n^2-2n+3.
 $$
Due to the above arguments (and the fact that the homothety algebra of any non-flat connected 2-dimensional surface has $\dim\leq3$)
it is now clear that this upper bound is achieved iff $(M,g)$ is $\C^{n-1}\times\Sigma^2$, where
$\Sigma^2$ is the complex curve 
equipped with a $J$-compatible metric of constant curvature $K\neq0$.
Theorem \ref{Thm3} is proved.

 \begin{rk}
Let us explain why the submaximal metric structure (\ref{subMKh}) is unique up to an isomorphism.
We use transitivity of the symmetry algebra of the corresponding c-projective structure from Remark \ref{trans}.

Fix a point $o\in M$ ($J$ is also fixed). Then the metric $g$ and the curvature tensor $R_g$
are determined up to complex affine transformation on $T_oM$. In fact, there is an invariant
null-complex line (and correspondingly the dual $\C$-line in the cotangent space) fixed by
the isotropy, and if we fix them the isotropy $\mathfrak{a}_0$ determines $(g,R_g)$ at $o$ uniquely.

Now $(M,g)$ is a symmetric space and so is uniquely determined by the data $(g,R_g)$ at one point $o$.
 \end{rk}

Let us now describe the structure of the symmetric space $M^4_0$ corresponding to the
submaximal (with respect to c-projective transformations) metric $g$ of (\ref{subMKh}) (since the cases $n>2$ are
obtained from this $M^4_0$ by direct product with $\C^{n-2}$, it suffices to study $n=2$ only).
We have $M_0=G/H$ for some Lie groups $G\supset H$ because the symmetry acts transitively.
There are 3 different presentation of $M_0$ as such quotient.

At first, we consider the Lie algebra of c-projective transformations $\mathfrak{s}$ with the isotropy
$\mathfrak{a}_0$ of type II from Section \ref{S2}. This 8D algebra is solvable with the derived series
of dimensions $(8,5,3,0)$. In addition, it has the $\Z_2$-grading:
$\mathfrak{s}=\mathfrak{s}_{-}+\mathfrak{s}_{+}$,
$[\mathfrak{s}_{\epsilon_1},\mathfrak{s}_{\epsilon_2}]\subset\mathfrak{s}_{\epsilon_1\epsilon_2}$.
In a basis $\{e_i\}_{i=1}^4$ of $\mathfrak{s}_{+}=\mathfrak{a}_0$ and a basis $\{e_i\}_{i=5}^8$ of
$\mathfrak{s}_{-}$ the structure equations write:
 \begin{gather*}
[e_1,e_3]=e_3,\ [e_1,e_4]=e_4,\ [e_1,e_5]=2e_5,\ [e_1,e_6]=3e_6,\ [e_1,e_7]=-e_7,\\
[e_2,e_5]=e_5,\ [e_2,e_6]=e_6,\ [e_2,e_7]=-e_7,\ [e_2,e_8]=-e_8,\\
[e_3,e_5]=e_6,\ [e_3,e_7]=e_8,\ [e_4,e_5]=e_6,\ [e_4,e_7]=-e_8,\ [e_5,e_7]=e_3.
 \end{gather*}
Thus $M_0=G^8_c/H^4_c$, where $G^8_c=\op{exp}(\mathfrak{s})$,
$H^4_c=\op{exp}(\mathfrak{s}_{+})$.

Second, consider the group $G^6_k$ of holomorphic isometries, i.e.\ symmetries of the pseudo-K\"ahler
structure $(g,J)$. This group is also solvable, with the derived series of dimensions $(6,5,3,0)$
and the Abelian stabilizer $H^2_k$. Again there is $\Z_2$-grading and the structure equations
of the Lie algebra $\mathfrak{s}'$ in an adapted basis $\{e_i\}_{i=1}^2$ of $\mathfrak{s}'_{+}$,
$\{e_i\}_{i=3}^6$ of $\mathfrak{s}'_{-}$ are:
 \begin{gather*}
[e_1,e_3]=-e_3,\ [e_1,e_4]=e_4,\ [e_1,e_5]=-e_5,\ [e_1,e_6]=e_6,\\
[e_2,e_5]=e_3,\ [e_2,e_6]=e_4,\ [e_5,e_6]=e_2.
 \end{gather*}
Thus $M_0=G^6_k/H^2_k$, where $G^6_k=\op{exp}(\mathfrak{s}')$,
$H^2_k=\op{exp}(\mathfrak{s}'_{+})$.

Finally consider the isometry group $G^8_i$ of $g$ with the stabilizer $H^4_i\simeq\op{GL}(2,\R)$
(the center $e_4$ acts by homothety). The group is the semi-direct product
$SL(2,\R)\ltimes\op{exp}(\mathfrak{r}_5)$, where $\mathfrak{r}_5$ is the 5D Lie algebra given
with the basis $\{e_i\}_{i=4}^8$ given by the relations
 $$
[e_4,e_5]=e_7,\ [e_4,e_6]=e_8,\ [e_5,e_6]=e_4.
 $$
In the basis $\{e_i\}_{i=1}^3$ of $\mathfrak{sl}(2)$: $[e_1,e_2]=-2e_2$, $[e_1,e_3]=2e_3$,
$[e_2,e_3]=e_1$, the representation is given by
 $$
[e_2,e_5]=e_6,\ [e_2,e_7]=e_8,\ [e_3,e_6]=-e_5,\ [e_3,e_8]=-e_7
 $$
and the action of $e_1$ is induced. The $\Z_2$-grading of the resulting
Lie algebra $\mathfrak{s}''=\op{Lie}(G^8_i)$ is $\mathfrak{s}''_{+}=\langle e_i\rangle_{i=1}^4$,
$\mathfrak{s}''_{-}=\langle e_i\rangle_{i=5}^8$, and $M_0=G^8_i/H^4_i$.


\appendix

\section{Some details on the model for type III, $n=2$}\label{S.A}

We found the c-projective structure $(M^4,J,[\nabla])$ of curvature type III with 8 symmetries
using parabolic geometry machinery and Cartan's equivalence method. Our computations exploited
the packages {\tt DifferentialGeometry} and {\tt Cartan} in Maple.

 \subsection{Structure equations}

We first derived the structure equations for the {\em normal} Cartan geometry $(\mathcal{G} \to M,\omega)$ of type $(G, P)$, $n=2$.
For the basics of  parabolic geometry machinery we refer to \cite{CS}.

The curvature 2-form $K = d\omega + \frac{1}{2}[\omega,\omega] \in \Omega^2(\mathcal{G};\g)$ yields the curvature function
$\kappa:\mathcal{G}\to\Lambda^2 \mathfrak{p}_+ \otimes\mathfrak{g}$ via the Killing form identification $(\mathfrak{g}/\mathfrak{p})^*=\fg_1=\mathfrak{p}_+$.  Normality means $\partial^*\kappa=0$, where
$\partial^*: \Lambda^2 \mathfrak{p}_+ \otimes \mathfrak{g} \to \mathfrak{p}_+ \otimes \mathfrak{g}$ is the Kostant codifferential.
Call $\mathbb{K} = \ker\partial^*$ the {\em curvature module}.  The unique grading element $Z \in \g_0 \cong \mathfrak{gl}(2,\mathbb{C})$ stratifies $\mathbb{K}$ into homogeneities, and we decompose each into $\g_0$-irreps:
 \begin{align*}
 \begin{array}{clc}
 \mbox{Homogeneity} & \mbox{$\g_0$-module decomposition} & \dim\\ \hline
 +3 & V_{4(1)} \oplus V_{4(2)} \oplus V_{4(3)} \oplus V_{12} & 24\\
 +2 & V_{16} \oplus V_{8} \oplus V_{6} \oplus V_{2} & 32\\
 +1 & V_{4(4)} & 4
 \end{array}
 \end{align*}
Here, $\dim_\mathbb{R}(V_i) = i$.  The {\em harmonic} curvature corresponds to the modules $V_{4(1)}, V_{16}, V_{4(4)}$, which comprise $\ker(\Box)$, where $\Box = \partial \partial^* + \partial^* \partial$ is the Kostant Laplacian (and $\partial$ is the Lie algebra cohomology differential).

Let $E_{jk}$ denote the $3\times 3$ matrix with a 1 in the $(j,k)$ position and 0 otherwise, and let $F_{jk} = i E_{jk}$.
Decompose into real and imaginary parts, $\omega = \theta + i\eta \in \Omega^1(\mathcal{G};\g)$, and impose a trace-free condition, say $\omega_{22} = -\omega_{11} - \omega_{33}$.  The structure equations are
 \begin{align*}
 d\theta_{jk} &= -\theta_{jl} \wedge \theta_{lk} + \eta_{jl} \wedge \eta_{lk} + \Re(K_{jk})\\
 d\eta_{jk} &= -\eta_{jl} \wedge \theta_{lk} - \theta_{jl} \wedge \eta_{lk} + \Im(K_{jk}).
 \end{align*}
The 2-forms $K_{jk}\in\mathbb{K}$ are obtain by via the duality
 \[
E_{12}\mapsto\theta_{21}, \quad F_{12}\mapsto-\eta_{21}, \quad E_{13}\mapsto\theta_{31}, \quad F_{13}\mapsto-\eta_{31}.
 \]
In particular,
 $$
K_{21} = (A_1 - i A_2) \,\overline{\omega_{21}} \wedge \overline{\omega_{31}} + \dots,\quad
K_{31} = (A_3 - i A_4) \,\overline{\omega_{21}} \wedge \overline{\omega_{31}} + \dots
 $$
where $A_1,...,A_4:\mathcal{G}\to V_{4(4)}$, and similarly for the coefficients $B_1,...,B_{32}$, $C_1,...,C_{24}$ of the other modules $V_{j(k)}$.
The first structure equations are
 \begin{align*}
 dA_1 &= (\theta_{33} - \theta_{11}) A_1 + (5 \eta_{11} + \eta_{33}) A_2 - \theta_{23} A_3 - \eta_{23} A_4 + \alpha_1\\
 dA_2 &= - (5 \eta_{11}+\eta_{33}) A_1 + (\theta_{33} - \theta_{11}) A_2 + \eta_{23} A_3 - \theta_{23} A_4 + \alpha_2\\
 dA_3 &= - \theta_{32} A_1 - \eta_{32} A_2 - (2 \theta_{11}+\theta_{33}) A_3 - (-4 \eta_{11}+\eta_{33}) A_4 + \alpha_3\\
 dA_4 &= \eta_{32} A_1 - \theta_{32} A_2 - (4 \eta_{11}-\eta_{33}) A_3 - (2 \theta_{11}+\theta_{33}) A_4 + \alpha_4
 \end{align*}
 where $\alpha_i$ are semi-basic 1-forms, i.e.\ linear combinations of base forms $\theta_{21}, \eta_{21}, \theta_{31}, \eta_{31}$.
Writing $d\alpha_i = \delta A_i + \alpha_i$, the $\delta A_i$ terms describe the infinitesimal vertical change of these coefficients under the $P$-action.

 \subsection{Derivation of the model}

We follow the method introduced by Cartan \cite{C$_2$}  (for a more detailed explanation see \cite{DMT})
to normalize curvature under the (vertical) action of the structure group.

In our case if the $N_J\neq0$, we obtain the normalization
 \[
 A_1 = 1, \quad A_2 = A_3 = A_4 = 0,
 \]
forcing the relations
 \[
 \theta_{33} = \theta_{11} - \alpha_1, \quad \eta_{33} = -5 \eta_{11} + \alpha_2, \quad
 \theta_{32} = \alpha_3, \quad \eta_{32} = -\alpha_4.
 \]
The residual structure group is now 8-dimensional and still contains $P_+$.  On coefficients in the $V_6$ and $V_2$ modules $P_+$ induces
translation actions on four coefficients, these can all be normalized to zero.  This reduces the bundle to $(\mathcal{E} \to M, S)$, where $S \subset P$ is a 4-dimensional subgroup, and $\mathcal{E}$ comes equipped with:
 \begin{itemize}
 \item an $S$-equivariant coframing: $\omega_{21}, \omega_{31}, \omega_{11}, \omega_{23}$;
 \item a vertical distribution $\mathcal{V}=\langle\omega_{21}, \omega_{31}\rangle^\perp$;
 \item an $S$-connection $\gamma$ with horizontal distribution $\mathcal{H}=\langle\omega_{11}, \omega_{23}\rangle^\perp$.
 \end{itemize}
The symmetry algebra of the c-projective structure is bounded by $8=\dim\mathcal{E}$. For this bound to be sharp, $S$ must act trivially on curvature coefficients. This forces the vanishing of many parts of the curvature function.  Indeed, after resolving all integrability conditions, we found that there is a {\em unique} model with 8 symmetries, and for it $\kappa|_{\mathcal{E}}$ has non-trivial components only in $V_{4(4)}$ and $V_8$.
Here are the structure equations:
 \begin{align*}
 d\omega_{21} &= -\omega_{23} \wedge \omega_{31} + 3 \overline{\omega_{11}} \wedge \omega_{21} + 6 \overline{\omega_{21}} \wedge \overline{\omega_{31}}\\
 d\omega_{31} &= 6i \mathfrak{Im}(\omega_{11}) \wedge \omega_{31} \\
 d\omega_{11} &= 3 \omega_{31} \wedge \overline{\omega_{31}} \\
 d\omega_{23} &= -27 \omega_{21} \wedge \overline{\omega_{31}} + 3\omega_{11} \wedge \omega_{23} - 12 i\mathfrak{Im}(\omega_{11}) \wedge \omega_{23}
 \end{align*}
The embedding relations and the structure algebra are:
 \begin{align*}
 & \omega_{12} = \omega_{32} = 0, \quad
 \omega_{13} = 15 \overline{\omega_{31}}, \quad
 \omega_{33} = \omega_{11} - 6i\mathfrak{Im}(\omega_{11})
 \end{align*}
  \begin{equation}\label{SAM}
 \begin{pmatrix}
 a_0 + i a_1 & 0 & 0 \\
 0 & -2a_0 + 4ia_1 & b_0 + i b_1\\
 0 & 0 & a_0 - 5ia_1
 \end{pmatrix}, \quad a_i, b_i \in \mathbb{R}.
 \end{equation}

Let $W_{jk} = e_{jk} + i f_{jk}$ be the dual framing on $\mathcal{E}$.  Then
 \begin{align*} \label{E:J}
 f_{21} \otimes \theta_{21} - e_{21} \otimes \eta_{21} + f_{31} \otimes \theta_{31} - e_{31} \otimes \eta_{31}
 \end{align*}
is pullback of the almost complex structure $J$ on $M$.  The minimal complex connection $\nabla$ can be read off from the principal connection $\gamma$.

Viewing $TM \cong \mathcal{E} \times_S V\simeq\mathbb{R}^4$ we integrate the structure equations and
obtain the model in coordinates as indicated in Section \ref{S3}.

 \subsection{Deformation approach}

Another approach to get a sub-maximal model is to deform a graded sub-algebra of $\g$
by preserving its filtered Lie algebra structure, but destroying the grading \cite{K$_3$,KT}.

In our case the graded sub-algebra $\mathfrak{a}^\phi=\g_{-}\oplus\mathfrak{a}_0\subset\g$
has complex matrix representation ($\a=\a^1+i\a^2, \b=\b^1+i\b^2, \nu_k=\nu_k^1+i\nu_k^2\in\C$)
 $$
\mathfrak{a}^\phi\ni A=\left(\begin{array}{c|cc}
 \a & 0 & 0 \\ \hline
 \nu_1 & 3\bar{\a}-2\a & 0\\
 \nu_2 & \b & \a-3\bar{\a}
 \end{array} \right)
 $$
(notice lower-triangular form vs. the upper-triangular form for the $2\times2$ block of the structure algebra in (\ref{SAM})
operating with the highest weight vector; they are conjugate by interchanging indices 2,3)
and we get basis by decomposition $A=\sum_{j=1}^2(\a^ja_j+\b^jb_j+\nu_j^1v'_j+\nu_j^2v''_j)$;
$\g_{-}=\langle v'_1,v''_1,v'_2,v''_2\rangle$ has grade $-1$ and
$\mathfrak{a}_0=\langle a_1,a_2,b_1,b_2\rangle$ has grade $0$.

The algorithm of deforming the Lie algebra structure on $\mathfrak{a}^\phi$ via the lowest weight vector
here fails. However the deformation exists. To find it let us deform the structure constants respecting the filtration
on $\mathfrak{a}^\phi$ (i.e.\ brackets $[\mathfrak{a}_0,\mathfrak{a}_0]$ are fixed, $[\g_{-},\mathfrak{a}_0]$ can
be changed by $\mathfrak{a}_0$ and $[\g_{-},\g_{-}]$ can be changed by everything), the Jacobi identity imposed.

This deformation has several branches (some with other types curvature), one of which is
($\lambda$ is the deformation parameter):
 \begin{gather*}
[a_1,b_1] = -3b_1, \ [a_1,b_2] = -3b_2, \ [a_2,b_1] = 9b_2, \ [a_2,b_2] = -9b_1,\\
[a_1,v'_2] = -3v'_2, \ [a_1,v''_2] = -3v''_2, \ [a_2,v'_1] = -6v''_1, \ [a_2,v''_1] = 6v'_1, \\
[a_2,v'_2] = 3v''_2, \ [a_2,v''_2] = -3v'_2, \ [b_1,v'_1] = v'_2, \ [b_1,v''_1] = v''_2, \\
[b_2,v'_1] = v''_2, \ [b_2,v''_1] = -v'_2, \ [v'_1, v''_1] = 6 \lambda^2 a_2, \\
[v'_1,v'_2] = 6\lambda v'_2-27\lambda^2 b_1, \ [v'_1,v''_2] = -6\lambda v''_2-27\lambda^2 b_2, \\
[v''_1,v'_2] = -6\lambda v''_2 +27\lambda^2 b_2, \ [v''_1,v''_2] = -6\lambda v'_2 -27\lambda^2 b_1.
 \end{gather*}
(notice that in the non-graded case $\lambda\neq0$, we can rescale $\lambda=1$).

These relations determine the Lie algebra $\mathfrak{f}$ with subalgebra
$\mathfrak{k}=\langle a_1,a_2,b_1,b_2\rangle\simeq\mathfrak{a}_0$.
The simply-connected Lie group $F$ of $\mathfrak{f}$ contains the Lie subgroup $K$
with $\op{Lie}(K)=\mathfrak{k}$.
As such we can take the normalizer in $F$ of $\g_{-}$ with respect to the adjoint action (for $\lambda\ne0$).

The homogeneous space $M^4=F/K$ has $F$-invariant (non-integrable) almost complex structure $J$
given by $Jv_j'=v_j''$. Moreover $M$ has $F$-invariant projective connection $[\nabla]$ given by
the Cartan bundle construction \cite[Lemma 4.1.4]{KT}. Since for non-zero values of the
parameters the symmetry algebra is non-graded, the obtained c-projective structure $(J,[\nabla])$ is
not-flat ($\mathfrak{a}^\phi$ is not filtration-rigid, see \cite[Proposition 4.2.2]{KT})
and hence it is sub-maximal symmetric with the symmetry algebra $\op{Sym}([\nabla],J)=\mathfrak{f}$ of dimension 8.

\section{Uniqueness of the submaximal structures}\label{S.B}

Classification of submaximal symmetric structures can be an extrem\-ely difficult problem depending
on the geometry in question\footnote{Maximal symmetric structures are unique in parabolic geometry,
but description of all such for more general structures can be quite intricate.}.
There does not exist any general result in this direction in the literature, but for c-projective structures
we can confirm the uniqueness as follows.

 \subsection{Classification of submaximal c-projective structures}

Submaximal c-projective structure of type II is unique up to an isomorphism.
Indeed, by the result of Section \ref{S2} the stabilizer of the symmetry algebra
(up to isomorphism) is equal to $\mathfrak{a}_0$. As explained in \cite{K$_3$,KT},
the symmetry algebra $\mathfrak{s}$ is filtered with the corresponding graded algebra
being $\mathfrak{a}^\phi=\g_-\oplus\mathfrak{a}_0$.

The process of recovery of $\mathfrak{s}$ from its subalgebra $\mathfrak{a}_0$
and the action of $\mathfrak{a}_0$ on $\mathfrak{s}/\mathfrak{a}_0=\g_-$ is described
as follows: 
one has to introduce indeterminate coefficients of the undetermined
commutators and then constrain these coefficients by the Jacobi identity.

Though in general this is quite a complicated system of quadratic equations, in our case
many linear equations that occur allow to resolve it. We used Maple to facilitate the heavy
computation. All cases $n\ge2$ follow the same pattern,
and as the output we obtain a 1-parameter Lie algebra structure $\tilde{\mathfrak{s}}(t)$.

If the parameter $t=0$ we get the graded algebra $\mathfrak{a}^\phi$, while the case
$t\neq0$ reparametrizes to $t=1$ corresponding to the symmetry $\mathfrak{s}$ of (\ref{subCmax}).
Thus there are only two cases to consider.

Contrary to the non-exceptional parabolic geometries the possibility of graded symmetry algebra
does not imply flatness of the c-projective structure (compare \cite[Example 4.4.3]{KT}).

Therefore on the next step we look for c-projective structures invariant with respect to
$\mathfrak{a}^\phi=\tilde{\mathfrak{s}}(0)$ and $\mathfrak{s}=\tilde{\mathfrak{s}}(1)$.
Such structures are unique and are: flat and (\ref{subCmax}) in the first/second cases respectively.
This proves the claim for type II structures.

\smallskip

Submaximal c-projective structure of type I is also unique up to an isomorphism.
This is actually a holomorphic version of the uniqueness of Egorov's submaximal (real) projective
structure. Such result was expected by experts, but Egorov's paper \cite{E$_1$} does not contain
an indication of this result. Therefore we have verified it directly by the method described for type II
(note that our computation applies to both smooth real and complex analytic cases).

Here everything is similar, but the pattern holds for the cases $n>2$ and the Maple computation
asserts the result. The case $n=2$ is an exception, and we refer the reader to \cite{Tr,K$_2$} for
the discussion of the smooth situation, in which case there are two submaximal models.
The analytic case is similar but the two models glue because $\pm$ arising in the smooth case
can be renormalized over $\C$. As the conclusion we obtain unicity for type I submaximal structures.

\medskip

A computer verification shows that submaximal c-projective structure of type III is also
unique up to an isomorphism for $n>2$, but the case $n=2$ for type III is an exception, and here
the uniqueness of the submaximal c-projective structure follows from the Cartan equivalence
method as described in Appendix \ref{S.A}.

 \subsection{Metrics with the submaximal c-projective symmetry}

Let us compute all metrics c-projectively equivalent to the pseudo-K\"ahler metric $g$ given by (\ref{subMKh}).
These are precisely those metrics that solve the metrizability equation for the c-projective structure (\ref{subCmax}), and
their number is equal to the dimension of the solution space of (\ref{eqA}), i.e.\
the degree of mobility of this metric: $D(g,J)=D_{\op{sub.max}}=(n-1)^2+1$.

To find these equivalent metrics let us compute the space of all parallel 1-forms:
 \begin{equation}\label{qwe}
dz_2, \dots, dz_n;\ d\bar{z}_2, \dots, d\bar{z}_n
 \end{equation}
(these already split into $(1,0)$ and $(0,1)$ type respective to $J$). Since
$\mathfrak{cp}(\nabla^g,J)=\mathfrak{aff}(g,J)$ the required metrics are linear
combinations of $g$ and $(1,1)$-type quadrics in the forms (\ref{qwe})
(the coefficient of $g$ in such combination has to be nonzero by nondegeneracy).

Indeed, dimension of the space of such combinations is $(n-1)^2+1$, and since this number equals $D_{\op{sub.max}}$,
there exists no other metric that is complex affine equivalent to the metric $g$. 
Thus the general metric, c-projectively equivalent to $g$ (up to scaling) is equal to
 $$
\hat{g}=|z_1|^2\,dz_1\,d\bar{z_1}+dz_1\,d\bar{z_2}+d\bar{z_1}\,dz_2
+\sum_{k,l=2}^nc_{kl}\,dz_k\,d\bar{z_l} \quad (c_{lk}=\overline{c_{kl}}),
 $$
and we again confirm that a pseudo-K\"ahler metric $\hat{g}$ with submaximal number of
c-projective symmetries cannot be of Riemannian signature.

\bigskip

{\bf\sc Acknowledgements.}
We thank A.\,\v{C}ap for helpful discussion and the anonymous referee for pointing some inconsistencies in the 
initial version of the text. The research of B.K.\ and V.M.\ was supported by German DAADppp grant 50966389 and the Norwegian 
Research Council. The research of V.M.\ and D.T.\ was supported by Go8-DAAD grant 56203040 of Germany-Australia cooperation.
D.T.\ is also supported by project M1884-N35 of the Austrian Science Fund (FWF).


\end{document}